\numberwithin{equation}{section}
\newtheorem{thm}{Theorem}[section]
\newtheorem{lemma}[thm]{Lemma}
\newtheorem{prop}[thm]{Proposition}
\newtheorem{cor}[thm]{Corollary}
\newtheorem{rmk}[thm]{Remark}
\theoremstyle{remark}
\newtheorem{example}[thm]{Example}
\newtheorem{defi}[thm]{Definition}
\DeclareMathOperator{\G}{G}    
\DeclareMathOperator{\IG}{IG}
\DeclareMathOperator{\Fl}{Fl}
\DeclareMathOperator{\Sp}{Sp}
\DeclareMathOperator{\QH}{QH}
\DeclareMathOperator{\BQH}{BQH}
\DeclareMathOperator{\Ext}{Ext}
\DeclareMathOperator{\Specan}{Specan}
\DeclareMathOperator{\Spec}{Spec}
\DeclareMathOperator{\Lie}{Lie}
\DeclareMathOperator{\Fuk}{\sf{Fuk}}
\DeclareMathOperator{\FS}{\sf{FS}}
\DeclareMathOperator{\MF}{\sf{MF}}
\DeclareMathOperator{\Coh}{\sf{Coh}}
\newcommand{\D}{{\Delta}}
\newcommand{\ZZ}{{\mathbb Z}}
\newcommand{\QQ}{{\mathbb Q}}
\newcommand{\bQ}{{\mathbb Q}}
\newcommand{\bC}{{\mathbb C}}
\newcommand{\starz}{{\!\ \star_{0}\!\ }}
\newcommand{\pt}{{{\rm pt}}}
\newcommand{\C}{{\mathbb C}}
\newcommand{\p}{\mathbb{P}}
\def \Yo {{\mathring{Y}}}
\def \spec {{{\rm Spec}}}
\newcommand{\cA}{\mathcal{A}}
\newcommand{\cC}{\mathcal{C}}
\newcommand{\cO}{\mathcal{O}}
\newcommand{\cU}{\mathcal{U}}
\newcommand{\cB}{\mathcal{B}}
\newcommand{\cT}{\mathcal{T}}
\newcommand{\cV}{\mathcal{V}}
\newcommand{\cR}{\mathcal{R}}
\newcommand{\cIA}{{\bar{\mathcal{A}}}}
\newcommand{\cIR}{{\bar{\mathcal{R}}}}
\newcommand{\LL}{\mathbb{L}}
\begin{document}

\begin{center}
\textbf{On quantum cohomology of Grassmannians of isotropic lines,\\ unfoldings of $A_n$-singularities, and Lefschetz exceptional collections}\\
\vspace{7pt}
J.~A.~Cruz~Morales, A.~Mellit, N.~Perrin, M.~Smirnov \\
\vspace{5pt}
with an appendix by A.~Kuznetsov\\
\end{center}

\vspace{20pt}

\begin{flushright}\textit{
Dedicated to Yuri~Ivanovich~Manin, \\
with admiration and gratitude}
\end{flushright}

\vspace{15pt}

{\small
\noindent {\scshape Abstract.} The subject of this paper is the big quantum cohomology rings of symplectic isotropic Grassmannians $\IG(2, 2n)$. We show that these rings are regular. In particular, by~``generic smoothness", we obtain a conceptual proof of generic semisimplicity of the big quantum cohomology for $\IG(2, 2n)$. Further, by a general result of C.~Hertling, the regularity of these rings implies that they have a description in terms of isolated hypersurface singularities, which we show in this case to be of type~$A_{n-1}$. By the homological mirror symmetry conjecture, these results suggest the existence of a very special full exceptional collection in the derived category of coherent sheaves on $\IG(2, 2n)$. Such a collection is constructed in the appendix by Alexander Kuznetsov.
}

\author{J.~A.~Cruz~Morales}
\address{Universidad Nacional de Colombia, Departamento de Matem\'aticas,
Carrera 45 No. 26--85  Edificio Uriel Guti\'errez
Bogot\'a D.C.,  Colombia}
\email{jacruzmo@unal.edu.co}

\author{A.~Mellit}
\address{Institute of Science and Technology, Austria, Am Campus 1, 3400 Klosterneuburg, Austria}
\email{mellit@gmail.com}

\author{N.~Perrin}
\address{Laboratoire de Math\'ematiques de Versailles, UVSQ, CNRS, Universit\'e Paris--Saclay, 78035 Versailles, France}
\email{nicolas.perrin@uvsq.fr}

\author{M.~Smirnov}
\address{
Universit\"at Augsburg,
Institut f\"ur Mathematik,
Universit\"atsstr.~14,
86159 Augsburg,
Germany
}
\email{maxim.smirnov@math.uni-augsburg.de}

\author{A.~Kuznetsov}
\address{
\parbox{0.95\textwidth}{
Algebraic Geometry Section, Steklov Mathematical Institute of Russian Academy of Sciences,
8 Gubkin str., Moscow 119991 Russia
\\[3pt]
The Poncelet Laboratory, Independent University of Moscow
\\[3pt]
Laboratory of Algebraic Geometry, National Research University Higher School of Economics
\bigskip
}
} 

\email{akuznet@mi.ras.ru}

\vspace{10pt}

\tableofcontents

\section{Introduction}

\subsection{Dubrovin's conjecture}

Quantum cohomology rings of smooth projective varieties have been an object of intensive study ever since they were introduced at the beginning of 1990s. A particular question that attracted a lot of attention is a conjecture formulated by Boris Dubrovin in \cite{Du}. This conjecture provides a beautiful relation between two \textit{a priori} seemingly unrelated objects --- the quantum cohomology ring of a variety $X$ and its bounded derived category of coherent sheaves $D^b(X)$. Dubrovin's conjecture has several parts, and its first part claims that the generic semisimplicity of the quantum cohomology of $X$ is equivalent to the existence of a full exceptional collection in $D^b(X)$. Though there are no general approaches to this conjecture, it has been tested in many examples and the original formulation of Dubrovin has been made more precise (see \cite{Ba,Du, Du-Strasbourg,GaGoIr,HeMaTe} and references therein).

\subsection{Quantum cohomology: big vs. small}

The quantum cohomology ring of $X$ is a (formal) deformation family of rings provided by the genus zero Gromov--Witten theory, that specializes to the ordinary cohomology ring $H^*(X,\bQ)$, if one sets all deformation parameters to zero. There exist two such deformation families that one often meets in the literature: the small quantum cohomology (involves only 3-point GW invariants) and the big quantum cohomology (involves GW invariants with arbitrary many insertions). The former family is a specialization of the latter (i.e. some of the deformation parameters are set to zero). It is the big quantum cohomology ring that Dubrovin used in \cite{Du} to formulate the conjecture.

In this paper we are concerned with generic semisimplicity of quantum cohomology. Therefore, we only consider the above deformation families in the neighbourhood of the generic point of the small quantum cohomology. In this way the small quantum cohomology (denote it $\QH(X)$) becomes a commutative, associative, finite dimensional algebra over some field $K$, and the big quantum cohomology (denote it $\BQH(X)$) is a formal deformation family of $\QH(X)$. This will be made more precise in Section~\ref{Sec.: Conventions and Notation}.

In Dubrovin's conjecture we are interested in the semisimplicity of the generic member of the deformation family $\BQH(X)$. So it could happen that the special fiber $\QH(X)$ is not semisimple, whereas the generic fiber of the full family $\BQH(X)$ is. This is exactly what happens in the case of symplectic isotropic Grassmannians $\IG(2,2n)$. Indeed, it was shown in \cite{ChMaPe,ChPe} that the small quantum cohomology of $\IG(2,2n)$ is not semisimple. Further, in \cite{GMS, Pe} jointly with Sergey Galkin, we have proved that the big quantum cohomology of $\IG(2,2n)$ is generically semisimple. Since, according to \cite{Ku, Sa}, the derived category of coherent sheaves on $\IG(2,2n)$ has a full exceptional collection, the first part of Dubrovin's conjecture holds.

\subsection{Generic semisimplicity via generic smoothness}

Our first result is the following (see Theorem~\ref{Prop.: Regularity of BQH}).

\medskip

\noindent {\bf Theorem~A.} The ring $\BQH(\IG(2,2n))$ is regular.

\medskip

As an easy consequence of the above theorem and generic smoothness (see Corollary~\ref{Thm.: semisimplicity}) we recover generic semisimplicity of the big quantum cohomology of $\IG(2,2n)$ proved in \cite{GMS, Pe}. Note that in~\cite{GMS} the authors only considered the case of $\IG(2,6)$ and the proof was computer-assisted. The proof in \cite{Pe} works for all $\IG(2,2n)$ but needs many lengthy computations. Our proof is more conceptual and clarifies the situation. This is the content of Sections \ref{Sec.: Conventions and Notation}--\ref{Sec.: BQH}.

Our approach via generic smoothness gives a new perspective towards a proof of generic semisimplicity of quantum cohomology for more general Grassmannians $\IG(m,2n)$ or even rational homogeneous spaces $G/P$. We plan to address this in a future work.

\subsection{$F$-manifolds with smooth spectral cover}

In \cite[Problem 2.8]{HeMaTe} the authors formulated the following question: {\it characterise those varieties $X$ for which the quantum cohomology has smooth spectral cover}. In our terminology this question translates into the problem of characterising those varieties $X$ for which the big quantum cohomology $\BQH(X)$ is a regular ring. According to the above theorem, isotropic Grassmannians $\IG(2,2n)$ provide non-trivial examples. In fact, it is natural to expect that this property holds for any rational homogeneous space $G/P$. 

The regularity of $\BQH(X)$ implies, by a beautiful result of C.~Hertling, that the $F$-manifold defined by the quantum cohomology of $X$ has a description in terms of unfoldings of isolated hypersurface singularities. In our situation we prove the following (see Section~\ref{SubSec.: F-mfd IG(2,2n)}).

\medskip

\noindent {\bf Theorem~B.} Assume that the genus zero Gromov--Witten potential of $\IG(2,2n)$ has non-trivial convergence radius. Then the $F$-manifold of $\BQH(\IG(2,2n))$ decomposes into the product of the unfolding of an $A_{n-1}$-singularity and $(2n-1)(n-1)$ copies of the unfolding of an $A_1$-singularity.
\medskip

\subsection{Derived category of coherent sheaves}

The above theorem and mirror symmetry suggest that $\IG(2,2n)$ should have a Landau--Ginzburg model with one degenerate critical point of type~$A_{n-1}$ and $(2n-1)(n-1)$ non-degenerate critical points. Therefore, the Fukaya--Seidel category of this LG model should have a semiorthogonal decomposition 
\begin{align*}
\langle \cC_{n-1}, E_1 , \dots , E_{(2n-1)(n-1)}  \rangle,
\end{align*} 
where $E_i$ are exceptional objects given by the non-degenerate critical points and $\cC_{n-1}$ is the Fukaya--Seidel category of an $A_{n-1}$-singularity. By homological mirror symmetry conjecture, the bounded derived category of coherent sheaves $D^b(\IG(2,2n))$ should be equivalent to the Fukaya--Seidel category of the LG model. Hence, the derived category $D^b(\IG(2,2n))$ should also have a decomposition of this form. A more detailed account is contained in Section~\ref{Sec.: LG model}.

The above discussion is highly conjectural. Nonetheless, the aforementioned semiorthogonal decomposition of $D^b(\IG(2,2n))$ is constructed directly in the Appendix by Alexander Kuznetsov.  

\medskip

\noindent {\bf Theorem~C (Kuznetsov).} There exists a semiorthogonal decomposition
\begin{align*}
D^b (\IG(2,2n)) = \langle \cA_{n-1}, E_1 , \dots , E_{(2n-1)(n-1)}  \rangle,
\end{align*} 
where the $E_i$ are some exceptional objects and the subcategory $\cA_{n-1}$ is equivalent to the bounded derived category of representations of the quiver of type $A_{n-1}$.

\medskip

This theorem confirms the conjectural picture described above, as the bounded derived category of representations of the quiver of type $A_{n-1}$ is equivalent to the Fukaya--Seidel category of an $A_{n-1}$-singularity by \cite{Sei}. Note that the result in the appendix is stronger. Namely, the objects $E_1 , \dots , E_{(2n-1)(n-1)}$ form a rectangular Lefschetz exceptional collection.

One can view these results as an enhanced version of Dubrovin's conjecture, i.e. a prediction of a more subtle relation between quantum cohomology rings and derived categories of coherent sheaves.

\subsection*{Acknowledgements}

We are indebted to Yuri Ivanovich Manin for his support over the years and for drawing our attention to results of Claus~Hertling indispensable for this paper. Further we are very grateful to Alexander Kuznetsov for providing the appendix, and numerous remarks on the main body of the paper. Special thanks go to Mohammed Abouzaid, Sheel Ganatra, and Ailsa Keating for the helpful email correspondence about Fukaya--Seidel categories. Last but not least we would like to thank our friends and colleagues Tarig Abdelgadir, Erik Carlsson, Roman Fedorov, Sergey Galkin, Christian Lehn, Sina T\"ureli and Runako Williams for valuable discussions and comments.

We would like to thank institutions that supported us at various stages of this project. Namely, we are very grateful to the International Centre for Theoretical Physics (ICTP) in Trieste, the Institute for Algebraic Geometry and the Riemann Center for Geometry and Physics at the Leibniz Universit\"at Hannover, and the Max Planck Institute for Mathematics (MPIM) in Bonn. The third author was supported by a public grant as part of the Investissement d'avenir project, reference ANR-11-LABX-0056-LMH, LabEx LMH.

\section{Conventions and notation for quantum cohomology}
\label{Sec.: Conventions and Notation}

Here we briefly recall the definition of the quantum cohomology ring for a smooth projective variety $X$. To simplify the exposition and avoid introducing unnecessary notation we impose from the beginning the following conditions on $X$: it is a Fano variety of Picard rank~1 and $H^{odd}(X,\QQ)=0$. For a thorough introduction we refer to \cite{Ma}.

\subsection{Definition}
\label{SubSec.: Def of QH}

Let us fix a graded basis $\Delta_0, \dots , \Delta_s$ in $H^*(X, \QQ)$ and dual linear coordinates $t_0, \dots, t_s$. It is customary to choose $\Delta_0=1$. For cohomology classes we use the Chow grading, i.e. we divide the topological degree by two. Further, for variables $t_i$ we set $\deg(t_i)=1-\deg(\D_i)$.

Let $R$ be the ring of formal power series $\QQ[[q]]$, $k$ its field of fractions, and $K$ an algebraic closure of $k$. We set  $\deg(q)= \text{index} \, (X)$, which is the largest integer $n$ such that $-K_X = nH$ for some ample divisor $H$ on $X$, where $K_X$ is the canonical class of $X$.

The genus zero Gromov--Witten potential of $X$ is an element $\Phi \in R[[t_0, \dots, t_s]]$ defined by the formula
\begin{align}\label{Eq.: GW potential}
&\Phi = \sum_{(i_0, \dots , i_s)}  \langle \Delta_0^{\otimes i_0}, \dots, \Delta_s^{\otimes i_s} \rangle \frac{t_0^{i_0} \dots t_s^{i_s} }{i_0!\dots i_s!}, 
\end{align}
where 
\begin{align*}
\langle \Delta_0^{\otimes i_0}, \dots, \Delta_s^{\otimes i_s} \rangle = \sum_{d=0}^{\infty} \langle \Delta_0^{\otimes i_0}, \dots, \Delta_s^{\otimes i_s} \rangle_d q^d,
\end{align*}
and $\langle \Delta_0^{\otimes i_0}, \dots, \Delta_s^{\otimes i_s} \rangle_d$ are rational numbers called Gromov--Witten invariants of $X$ of degree $d$. With respect to the grading defined above $\Phi$ is homogeneous of degree $3 - \dim X$.

Using \eqref{Eq.: GW potential} one defines the \textit{big quantum cohomology ring} of $X$. Namely, let us endow the $K[[t_0, \dots, t_s]]$-module
\begin{align*}
\BQH(X) = H^*(X, \bQ)\otimes_{\bQ} K[[t_0, \dots, t_s]]
\end{align*}
with a ring structure by setting
\begin{align}\label{Eq.: Quantum cohomology}
\Delta_a \star \Delta_b = \sum_c \frac{\partial^3 \Phi}{\partial t_a \partial t_b \partial t_c} \Delta^c,
\end{align}
on the basis elements and extending to the whole $\BQH(X)$ by $K[[t_0, \dots, t_s]]$-linearity. Here  $\D^0, \dots, \D^s$ is the basis dual to  $\D_0, \dots, \D_s$ with respect to the Poincar\'e pairing. It is well known that~\eqref{Eq.: Quantum cohomology} makes $\BQH(X)$ into a commutative, associative, graded $K[[t_0, \dots, t_s]]$-algebra with the identity element $\D_0$.

The algebra $\BQH(X)$ is called the \textit{big} quantum cohomology algebra of $X$ to distinguish it from a simpler object called the \textit{small} quantum cohomology algebra which is the quotient of $\BQH(X)$ with respect to the ideal $(t_0, \dots, t_s)$. We will denote the latter $\QH(X)$ and use $\starz$ instead of $\star$ for the product in this algebra. It is a finite dimensional $K$-algebra. Equivalently one can say that 
$$\QH(X)=H^*(X,\bQ) \otimes_{\bQ} K$$ 
as a vector space, and the $K$-algebra structure is defined by putting 
$$\Delta_a \starz \Delta_b = \sum_c \langle \D_a, \D_b, \D_c \rangle \Delta^c.$$

\begin{rmk}
We are using a somewhat non-standard notation $\BQH(X)$ for the big quantum cohomology and $\QH(X)$ for the small quantum cohomology to stress the difference between the two. Note that this notation is different from the one used in \cite{GMS} and is closer to the notation of \cite{Pe}.
\end{rmk}

\begin{rmk}\label{SubSubSec.: Remark on difference of notation with Manin's book}

The above definitions look slightly different from the ones given in~\cite{Ma}. The differences are of two types. The first one is that $\QH(X)$ and $\BQH(X)$ are in fact defined already over the ring $R$ and not only over $K$. We pass to $K$ from the beginning, since in this paper we are only interested in generic semisimplicity of quantum cohomology. The second difference is that in some papers on quantum cohomology one unifies the coordinate $q$ with the coordinate $t_i$ which is dual to $H^2(X, \bQ)$, but the resulting structures are equivalent.
\end{rmk}

\subsection{Deformation picture}
\label{SubSec.: Def. Picture}

The small quantum cohomology, if considered over the ring $R$ (cf. Remark~\ref{SubSubSec.: Remark on difference of notation with Manin's book}), is a deformation of the ordinary cohomology algebra, i.e. if we put $q=0$, then the quantum product becomes the ordinary cup-product. Similarly, the big quantum cohomology is an even bigger deformation family of algebras. Since we work not over $R$ but over $K$, we lose the point of classical limit but still retain the fact that $\BQH(X)$ is a deformation family of algebras with the special fiber being $\QH(X)$. 

In this paper we view $\spec(\BQH(X))$ as a deformation family of zero-dimensional schemes over $\spec (K[[t_0,\dots, t_s]])$. In the base of the deformation we consider the following two points: the origin (the closed point given by the maximal ideal $(t_0, \dots , t_s)$) and the generic point $\eta$. The fiber of this family over the origin is the spectrum of the small quantum cohomology $\spec(\QH(X))$. The fiber over the generic point will be denoted by $\spec(\BQH(X)_{\eta})$. It is convenient to summarize this setup in the diagram
\begin{align}\label{Eq.: BQH family}
\vcenter{
\xymatrix{
\spec(\QH(X)) \ar[d] \ar[r]   &    \spec(\BQH(X)) \ar[d]^{\pi} & \ar[l] \spec(\BQH(X)_{\eta}) \ar[d]^{\pi_{\eta}} \\
\spec (K)    \ar[r]      & \spec (K[[t_0, \dots, t_s]])  &   \ar[l] \eta
}
}
\end{align}
where both squares are Cartesian. 
    
By construction $\BQH(X)$ is a free module of finite rank over $K[[t_0, \dots,t_s]]$. Therefore, it is a noetherian semilocal $K$-algebra which is flat and finite over $K[[t_0, \dots,t_s]]$. Note that neither $K[[t_0, \dots,t_s]]$ nor $\BQH(X)$ are finitely generated over the ground field $K$. Therefore, some extra care is required in the standard commutative algebra (or algebraic geometry) constructions. For example, the notion of smoothness is one of such concepts.

\subsection{Semisimplicity}
\label{SubSec.: Semisimplicity}

Let $A$ be a finite dimensional algebra over a field $F$ of characteristic zero. It is called \textit{semisimple} if it is a product of fields. Equivalently, the algebra $A$ is semisimple if the scheme $\spec(A)$ is reduced. Another equivalent condition is to require the morphism $\spec(A) \to \spec(F)$ to be smooth.

\begin{defi}
We say that $\BQH(X)$ is \textit{generically semisimple} if $\BQH(X)_{\eta}$ is a semisimple algebra.
\end{defi}

\section{Geometry of $\IG(2,2n)$}
\label{Sec.: Geometry}

Let $V$ be a complex vector space endowed with a symplectic form $\omega$. In this case the dimension of $V$ has to be even and we denote it by $2n$. For any $1 \leq m \leq n$ there exists an algebraic variety $\IG(m, V)$ that parametrizes $m$-dimensional isotropic subspaces of $V$. For $m=2$, and this is the case we are considering in this paper, it has the following explicit description. Consider the ordinary Grassmannian $\G(2,V)$ with its Pl\"ucker embedding into $\mathbb{P}(\Lambda^2V)$. The symplectic form $\omega$ defines a hyperplane $H_{\omega}=\p \, (\text{Ker} \,\omega) \subset \mathbb{P}(\Lambda^2V)$ and the intersection of $\G(2,V)$ with $H_{\omega}$ is  $\IG(2, V)$. Thus, we have inclusions
\begin{align}\label{Eq.: Hyperplane section diagram for IG(2,2n)}
\IG(2, V) \subset \G(2,V) \subset  \mathbb{P}(\Lambda^2V).
\end{align}
As for different symplectic forms on $V$ we obtain isomorphic isotropic Grassmannians, it is unambiguous to write $\IG(2,2n)$. We will use this notation starting from Paragraph~\ref{SubSec.: Cohomology of IG}.

\subsection{Two sets of cohomology classes}

As for ordinary Grassmannians, one considers the short exact sequence of vector bundles on $X=\IG(2, V)$
\begin{align}\label{Eq.: Tautological S.E.S I}
0 \to \mathcal{U} \to \mathcal{V} \to \mathcal{V/U} \to 0,
\end{align}
where $\mathcal{V}$ is the trivial vector bundle with fiber $V$, $\mathcal{U}$ is the subbundle of isotropic subspaces, and $\mathcal{V/U}$ is the quotient bundle. Usually one refers to $\mathcal{U}$ and $\mathcal{V/U}$ as \textit{tautological subbundle} and \textit{tautological quotient bundle} respectively. 

One also defines a vector bundle $\cU^\perp$ as the kernel of the composition $\cV \overset{\omega}{\to} \cV^* \to \cU^*$, where the first morphism is the isomorphism induced by the symplectic form, and the second one is the dual of the natural inclusion $\cU \to \cV$. From the definition of $\cU^\perp$ we immediately obtain an isomorphism
\begin{align}\label{Eq.: V/U^perp iso U^*}
\cV/\cU^\perp \simeq \cU^*.
\end{align}

Further, we have inclusions of vector bundles $\cU \subset \cU^\perp \subset \cV$, and can also consider the short exact sequence
\begin{align}\label{Eq.: Tautological S.E.S II}
0 \to \mathcal{U}^{\perp}/\mathcal{U} \to \mathcal{V}/\mathcal{U} \to \mathcal{V}/\mathcal{U}^\perp \to 0.
\end{align}
By taking Chern classes of vector bundles in \eqref{Eq.: Tautological S.E.S I} and \eqref{Eq.: Tautological S.E.S II} we obtain two sets of cohomology classes which generate the cohomology ring:  

\smallskip

\noindent \textbf{(a)~Chern classes of $\mathcal{V/U}$.} The vector bundle $\cV/\cU$ is of rank $2n-2$ and and we denote its Chern classes by $\sigma_k = c_k(\cV/\cU)$. Cycles representing these cohomology classes can be explicitly described as follows. Let
\begin{align}\label{Eq.: Schubert subvariety}
Z(E_{2n - k - 1}) = \{V_2 \in X \ | \ \dim(V_2 \cap E_{2n - k - 1}) \geq 1 \}.
\end{align}
Then $\sigma_k = [Z(E_{2n - k - 1})] \in H^{2k}(X,\ZZ)$. In the above, $E_{2n - k - 1}$ is a subspace of dimension $2n - k -1$ such that the rank of $\omega\vert_{E_{2n - k - 1}}$ is minimal. Note that $\sigma_0 = 1$ is the fundamental class of $X$. These cohomology classes are usually called \textit{special Schubert classes}.

\smallskip

\noindent \textbf{(b)~Chern classes of $\mathcal{U}$ and $\mathcal{U}^{\perp}/\mathcal{U}$.} The vector bundle $\mathcal{U}$ is of rank $2$, so it only has two non-vanishing Chern classes $a_i=c_i(\mathcal{U})$ for $i=1, 2$. The vector bundle $\mathcal{U}^{\perp}/\mathcal{U}$ is self-dual of rank $2n-4$, therefore it has only $n-2$ non-vanishing Chern classes $b_i= c_{2i}(\mathcal{U}^{\perp}/\mathcal{U})$ for $i\in [1, n- 2]$.

\subsection{Cohomology ring of $\IG(2,2n)$}
\label{SubSec.: Cohomology of IG}

The cohomology ring of $X=\IG(2,2n)$ can be described in terms of generators and relations. We will give two presentations using the two sets of special cohomology classes defined above.

\begin{prop}[\cite{BKT}, Theorem 1.2]
\label{Prop.: Presentation for coh I}

The cohomology ring $H^*(X,\QQ)$ is isomorphic to the quotient of the ring $\QQ[\sigma_1,\cdots,\sigma_{2n-2}]$ by the ideal generated by the elements
\begin{align}\label{Eq.: Eqns for H(IG), part I}
\det(\sigma_{1 + j - i})_{1 \leq i,j \leq r}, \ \ \ \textrm{with } r \in [3,2n-2]
\end{align}
and the two elements 
\begin{align}\label{Eq.: Eqns for H(IG), part II}
\sigma_{n-1}^2 + 2 \sum_{i = 1}^{n - 1} (-1)^i \sigma_{n - 1 + i} \sigma_{n - 1 - i}  \ \ \textrm{  and  }\ \  \sigma_n^2 + 2 \sum_{i = 1}^{n - 2} (-1)^i \sigma_{n + i} \sigma_{n - i}.
\end{align}
The dimension of $H^*(X, \QQ)$ is equal to $2^2\binom{n}{2}=2n(n-1)$.
\end{prop}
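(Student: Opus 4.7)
The plan is to identify $H^*(\IG(2,2n),\QQ)$ as the quotient of $\QQ[\sigma_1,\dots,\sigma_{2n-2}]$ by the stated ideal through three steps: generation, verification of the relations, and dimension count.

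For generation, I would use the Giambelli-type formula for $\IG(2,2n)$: every Schubert class is expressible as a polynomial in the special Schubert classes $\sigma_k=c_k(\cV/\cU)$. Combined with the Bruhat cell decomposition of $\Sp(2n)/P_2$, which produces $2n(n-1)$ cells indexed by minimal coset representatives in $W(\Sp(2n))/W_{P_2}$, this simultaneously yields generation of $H^*(\IG(2,2n),\QQ)$ by $\sigma_1,\dots,\sigma_{2n-2}$ and the target dimension.

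Next, the determinantal relations~\eqref{Eq.: Eqns for H(IG), part I} follow from the Whitney formula applied to~\eqref{Eq.: Tautological S.E.S I}: triviality of $\cV$ gives $c(\cU)\cdot c(\cV/\cU)=1$, and the Jacobi--Trudi identity identifies $(-1)^r c_r(\cU)$ with $\det(\sigma_{1+j-i})_{1\leq i,j\leq r}$, which must vanish for $r\geq 3$ because $\cU$ has rank~$2$. For the two quadratic relations~\eqref{Eq.: Eqns for H(IG), part II}, the key identity is
\begin{align*}
\sigma(t)\cdot\sigma(-t) \;=\; c(\cU^\perp/\cU)(t),
\end{align*}
obtained by combining $c(\cV/\cU)=c(\cU^*)\cdot c(\cU^\perp/\cU)$ (from~\eqref{Eq.: Tautological S.E.S II} together with~\eqref{Eq.: V/U^perp iso U^*}), the self-duality of $\cU^\perp/\cU$, and $c(\cU)\cdot c(\cU^*)\cdot c(\cU^\perp/\cU)=c(\cV)=1$. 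Since $\cU^\perp/\cU$ has rank $2n-4$, the right-hand side is a polynomial in $t^2$ of degree $n-2$; hence the coefficient of $t^{2m}$ on the left must vanish for $m\geq n-1$. Extracting these coefficients at $m=n-1$ and $m=n$, and using $\sigma_j=0$ for $j>2n-2$, reproduces precisely the two relations of~\eqref{Eq.: Eqns for H(IG), part II}. The vanishings for $m\geq n+1$ are automatic from the two-term recursion satisfied by $c_{2m}(\cU^\perp/\cU)$ as a consequence of $c(\cU)c(\cU^*)c(\cU^\perp/\cU)=1$, so no further independent relations arise.

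The main obstacle is the dimension count: showing that the ideal generated by~\eqref{Eq.: Eqns for H(IG), part I} and~\eqref{Eq.: Eqns for H(IG), part II} cuts $\QQ[\sigma_1,\dots,\sigma_{2n-2}]$ down to a quotient of $\QQ$-dimension at most $2n(n-1)$. A standard-monomial or Gr\"obner-basis argument, producing an explicit spanning set of monomials in $\sigma_1,\sigma_2,\dots$ and combined with the evaluation surjection onto $H^*(\IG(2,2n),\QQ)$, forces the required equality. Carefully controlling the interplay between the determinantal relations and the two quadratic symplectic relations is the combinatorial heart of the argument of~\cite{BKT}.
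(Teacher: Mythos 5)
The paper does not prove this proposition at all: it is stated with an attribution to \cite[Theorem~1.2]{BKT} and no argument is given, so there is strictly speaking no ``paper's own proof'' to compare against. Your sketch is a reasonable reconstruction of how such a presentation is established, and the Chern-class manipulations are correct. In particular, from $c(\cU)(t)\cdot\sigma(t)=1$ one indeed gets $(-1)^r c_r(\cU)=\det(\sigma_{1+j-i})_{1\le i,j\le r}$, which vanishes for $r\ge 3$ since $\cU$ has rank two; and combining \eqref{Eq.: Tautological S.E.S I}, \eqref{Eq.: Tautological S.E.S II} and \eqref{Eq.: V/U^perp iso U^*} does give $\sigma(t)\sigma(-t)=c(\cU^\perp/\cU)(t)$, whose truncation at $t^{2(n-1)}$ and $t^{2n}$ reproduces the two quadratic relations of \eqref{Eq.: Eqns for H(IG), part II} after re-indexing and using $\sigma_j=0$ for $j>2n-2$.

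The one place where your route genuinely diverges from the source is the dimension bound. You defer to a ``standard-monomial or Gr\"obner-basis argument,'' but \cite{BKT} do not argue this way: as the paper itself hints in the proof of Proposition~\ref{Prop.: Presentation for coh II}, the dimension is computed via \cite[Lemma~1.1]{BKT}, which rests on Stanley's Hilbert-series result \cite{S} --- essentially, one shows the relations form a regular sequence so that the quotient is a graded complete intersection, and then reads off the dimension as the product of the degrees of the relations divided by the product of the degrees of the generators. Both approaches can work, but the Hilbert-series route is the one that actually yields $2n(n-1)$ with minimal combinatorial bookkeeping, whereas a Gr\"obner-basis argument here would require producing and verifying a rather intricate standard-monomial basis. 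It would strengthen your sketch to either invoke the complete-intersection/Hilbert-series argument explicitly, or else commit to a concrete monomial order and leading-term calculation; as written, the key quantitative step is acknowledged but not supplied.
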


\begin{prop} 
\label{Prop.: Presentation for coh II}
The cohomology ring $H^*(X,\QQ)$ is isomorphic to the quotient of the ring $\QQ[a_1,a_2,b_1,\cdots,b_{n-2}]$ by the ideal generated by
\begin{align}\label{Eq.: Relation in Presentation II}
(1 + (2a_2 - a_1^2) x^2 + a_2^2 x^4)(1 + b_1 x^2 + \cdots + b_{n-2} x^{2n-4}) = 1
\end{align}
The last equality is viewed as an equality of polynomials in the variable $x$ and gives a concise way to write a system of  equations in the variables $a_i, b_i$.
\end{prop}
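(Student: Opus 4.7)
The plan is to construct the natural map from the candidate presentation to $H^*(X,\QQ)$, verify that the claimed relations map to zero, check surjectivity, and finally match dimensions against Proposition~\ref{Prop.: Presentation for coh I}. Let $\phi\colon \QQ[a_1,a_2,b_1,\ldots,b_{n-2}] \to H^*(X,\QQ)$ be the ring map sending the generators to the corresponding Chern classes of $\cU$ and $\cU^\perp/\cU$. To see that $\phi$ annihilates the claimed ideal, combine the short exact sequence \eqref{Eq.: Tautological S.E.S I} with $0 \to \cU^\perp \to \cV \to \cV/\cU^\perp \to 0$ and the isomorphism \eqref{Eq.: V/U^perp iso U^*}: this yields a three-step filtration of the trivial bundle $\cV$ with graded pieces $\cU$, $\cU^\perp/\cU$, and $\cU^*$. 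The Whitney formula then gives
\begin{equation*}
c(\cU) \cdot c(\cU^\perp/\cU) \cdot c(\cU^*) \;=\; c(\cV) \;=\; 1.
\end{equation*}
A direct computation shows $c(\cU)c(\cU^*) = 1 + (2a_2 - a_1^2) + a_2^2$, all of whose terms live in even Chow degree. Moreover, the symplectic form on $\cV$ restricts to a non-degenerate symplectic form on $\cU^\perp/\cU$, so that bundle is self-dual, and hence $c(\cU^\perp/\cU) = 1 + b_1 + \cdots + b_{n-2}$ with all odd Chern classes vanishing rationally. Marking even Chow degrees with a formal variable $x$, the Whitney identity is precisely \eqref{Eq.: Relation in Presentation II}.

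For surjectivity, apply the Whitney formula to \eqref{Eq.: Tautological S.E.S II} together with \eqref{Eq.: V/U^perp iso U^*} to obtain $c(\cV/\cU) = c(\cU^\perp/\cU) \cdot c(\cU^*)$, which expresses every special Schubert class $\sigma_k = c_k(\cV/\cU)$ as a polynomial in $a_1, a_2, b_1, \ldots, b_{n-2}$. By Proposition~\ref{Prop.: Presentation for coh I} the classes $\sigma_k$ generate $H^*(X,\QQ)$, so $\phi$ is surjective and induces a surjection $\bar\phi\colon R \twoheadrightarrow H^*(X,\QQ)$ from the quotient $R := \QQ[a_1,a_2,b_1,\ldots,b_{n-2}]/I$ by the ideal $I$ of the claimed relations.

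It remains to show $\dim_{\QQ} R = 2n(n-1)$. Writing $p = 2a_2 - a_1^2$ and $q = a_2^2$, the coefficient of $x^{2k}$ in $(1 + p\, x^2 + q\, x^4)(1 + b_1 x^2 + \cdots + b_{n-2} x^{2n-4})$ for $k = 1, \ldots, n-2$ reads $b_k + p\, b_{k-1} + q\, b_{k-2} = 0$ (with $b_0 = 1$, $b_{-1} = 0$), and this recursion solves each $b_k$ as an explicit polynomial in $a_1, a_2$. After eliminating the $b_k$, the coefficients of $x^{2n-2}$ and $x^{2n}$ descend to two polynomial relations $R_1, R_2 \in \QQ[a_1,a_2]$ of Chow degrees $2n-2$ and $2n$ respectively. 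Since $\deg a_1 = 1$ and $\deg a_2 = 2$, the Hilbert series of $\QQ[a_1,a_2]/(R_1,R_2)$ is bounded coefficient-wise by
\begin{equation*}
\frac{(1-t^{2n-2})(1-t^{2n})}{(1-t)(1-t^2)},
\end{equation*}
whose value at $t = 1$ equals $(2n-2)(2n)/2 = 2n(n-1)$. The hard part is precisely this dimension step: a direct argument would require knowing that $R_1, R_2$ is a regular sequence in $\QQ[a_1,a_2]$, which is not obvious a priori. However, this is bypassed by combining the Hilbert-series upper bound $\dim_\QQ R \leq 2n(n-1)$ with the lower bound $\dim_\QQ R \geq \dim_\QQ H^*(X,\QQ) = 2n(n-1)$ coming from the surjection $\bar\phi$ and Proposition~\ref{Prop.: Presentation for coh I}. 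The two bounds coincide, forcing $\bar\phi$ to be an isomorphism without any explicit analysis of $R_1$ and $R_2$.
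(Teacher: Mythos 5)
Your construction of $\phi$, the verification of the relations via the Whitney sum formula and the self-duality of $\cU^\perp/\cU$, and the surjectivity argument via $c(\cV/\cU) = c(\cU^\perp/\cU)\,c(\cU^*)$ all match the paper's approach. The dimension step, however, contains a genuine error: the Hilbert-series bound you invoke goes in the wrong direction. For homogeneous $R_1, R_2$ of degrees $d_1, d_2$ in the domain $S=\QQ[a_1,a_2]$, one always has the coefficient-wise inequality
\begin{equation*}
\mathrm{Hilb}\bigl(S/(R_1,R_2)\bigr) \;\geq\; \frac{(1-t^{d_1})(1-t^{d_2})}{(1-t)(1-t^2)},
\end{equation*}
with equality if and only if $R_1,R_2$ is a regular sequence; the discrepancy equals $t^{d_2}\,\mathrm{Hilb}\bigl(((R_1):R_2)/(R_1)\bigr)$, a series with non-negative coefficients. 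Intuitively, the ideal $(R_1,R_2)$ is largest --- and hence the quotient smallest --- precisely in the complete-intersection case, so the complete-intersection Hilbert series is a \emph{lower} bound for the quotient, not an upper bound. Your argument therefore yields $\dim_\QQ R \geq 2n(n-1)$, the same direction as the surjection $\bar\phi$, and two lower bounds cannot force an isomorphism. Worse, the dichotomy is sharp: if $R_1,R_2$ is not a homogeneous system of parameters in the two-dimensional ring $\QQ[a_1,a_2]$ then $\dim_\QQ R = \infty$. The regular-sequence property you wanted to bypass is therefore exactly the content of the claim, and the bypass is circular.

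The paper closes this gap by first establishing finite-dimensionality of the quotient, adapting the finiteness argument of \cite[Theorem 1.2]{BKT}. Once finiteness is known, $R_1,R_2$ is automatically a homogeneous system of parameters, hence a regular sequence, in the Cohen--Macaulay ring $\QQ[a_1,a_2]$, and the Hilbert-series count then gives the exact dimension $2n(n-1)$, following \cite[Lemma 1.1]{BKT} and \cite{S}. To repair your proof you must add a finiteness argument, for instance by exhibiting powers $a_1^N, a_2^N$ in the ideal $(R_1,R_2)$, or by checking directly that $R_1$ and $R_2$ have no common nontrivial zero over $\overline\QQ$.
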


\begin{proof} 

This result is well know to specialists but we include a short proof for the convenience of the reader.

Let us start by checking that \eqref{Eq.: Relation in Presentation II} holds in the cohomology ring. Define $P(x)= 1 + a_1 x + a_2 x^2$ and $Q(x)= 1 + b_1 x^2 + \dots + b_{n-2}x^{2n-4}$ and rewrite \eqref{Eq.: Relation in Presentation II} as
$
P(x)P(-x)Q(x)=1.
$
We interpret the polynomial $P(x)$ as the total Chern class of $\mathcal{U}$ and $Q(x)$ as the total Chern class of $\mathcal{U}^{\perp}/\mathcal{U}$. Now by using basic properties of Chern classes, short exact sequences \eqref{Eq.: Tautological S.E.S I},\eqref{Eq.: Tautological S.E.S II}, and  the isomorphism \eqref{Eq.: V/U^perp iso U^*}, it is easy to see that the above relation does hold. 

The above discussion shows that we have a natural homomorphism of $\QQ$-algebras
\begin{align}\label{Eq.: algebra hom in the proof of pres II}
\psi \colon \QQ[a_1,a_2,b_1,\cdots,b_{n-2}]/(P(x)P(-x)Q(x) -1)  \to H^*(X, \QQ)
\end{align}   
sending $a_i$'s to $a_i$'s and $b_i$'s to $b_i$'s. To prove the proposition it is enough to establish two facts: i) $\sigma_i$'s can be expressed in terms of $a_i$'s and $b_i$'s, ii) the dimensions of both algebras in \eqref{Eq.: algebra hom in the proof of pres II} are equal.  To prove the first fact one can use again simple properties of Chern classes and the exact sequence \eqref{Eq.: Tautological S.E.S II}. For the second fact, we first need to show that $\QQ[a_1,a_2,b_1,\cdots,b_{n-2}]/(P(x)P(-x)Q(x) -1)$ is finite dimensional. This can be done similarly to the finite-dimensionality part of the proof of \cite[Theorem 1.2]{BKT}. Then, we need to compute the dimension of this algebra. For this we can proceed similarly to the proof of \cite[Lemma 1.1]{BKT}, which is based on \cite{S}, and get the desired dimension $2n(n-1)$. 
\end{proof}

\section{Small quantum cohomology of $\IG(2,2n)$}

\subsection{Two presentations}

As described in Section \ref{SubSec.: Cohomology of IG}, the ordinary cohomology ring of the Grassmannian $\IG(2,2n)$ is generated by the special Schubert classes $\sigma_1, \dots , \sigma_{2n-2}$ with relations \eqref{Eq.: Eqns for H(IG), part I} and \eqref{Eq.: Eqns for H(IG), part II}. To pass to the small quantum cohomology, informally speaking, we need to adjoin a new variable $q$ to the $\sigma_i$'s and adjust relations \eqref{Eq.: Eqns for H(IG), part I}--\eqref{Eq.: Eqns for H(IG), part II} in such a way that they remain homogeneous and specialize to the original ones when setting $q=0$. The degree of the variable $q$ in the case of $\IG(2,2n)$ is equal to $2n-1$ (cf. Section~\ref{SubSec.: Def of QH}). Thus, the only relation that needs to be modified is the second equation in \eqref{Eq.: Eqns for H(IG), part II}. Moreover, up to a constant factor, this modification is unique for degree reasons. The complete answer for arbitrary Grassmannians $\IG(m,2n)$ was given in \cite[Theorem 1.5]{BKT} which we reproduce here in the special case of $m=2$.

\begin{thm}[\cite{BKT}, Theorem 1.5]
\label{thm-bkt}
The small quantum cohomology ring $\QH(X)$ is isomorphic to the quotient of the ring $K[\sigma_1,\cdots,\sigma_{2n-2}]$ by the ideal generated by the elements
$$\det(\sigma_{1 + j - i})_{1 \leq i,j \leq r}, \ \ \ \textrm{with } r \in [3,2n-2]$$
and the two elements 
$$\sigma_{n-1}^2 + 2 \sum_{i = 1}^{n - 1} (-1)^i \sigma_{n - 1 + i} \sigma_{n - 1 - i}  \ \ \textrm{  and  }\ \  \sigma_n^2 + 2 \sum_{i = 1}^{n - 2} (-1)^i \sigma_{n + i} \sigma_{n - i} + (-1)^{n+1} q \sigma_{1}.$$
\end{thm}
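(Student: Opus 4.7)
The plan is to treat this as a quantum deformation of Proposition~\ref{Prop.: Presentation for coh I}, exploiting the grading with $\deg(q)=2n-1$. First I would introduce the candidate ring $A = K[\sigma_1,\dots,\sigma_{2n-2}]/I_q$, with $I_q$ generated by the stated elements, and produce a surjective graded $K$-algebra homomorphism $\varphi\colon A \twoheadrightarrow \QH(X)$. Existence of such a $\varphi$ amounts to showing that each listed generator of $I_q$ vanishes in $\QH(X)$. The determinantal relations and the first Pfaffian relation lie in cohomological degrees $\leq 2(n-1) < 2n-1 = \deg q$, so any potential quantum correction would have to be divisible by $q$ times a class of negative degree; consequently those classical relations continue to hold in $\QH(X)$ unchanged. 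This leaves only the relation of degree $2n$, where a correction of degree $2n-(2n-1)=1$ is possible, and the only available class in $H^2(X,\mathbb{Q})$ is~$\sigma_1$.

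The heart of the proof is therefore identifying the coefficient $(-1)^{n+1}$ of $q\sigma_1$ in the quantum-corrected degree-$2n$ relation. I would reduce this to the computation of a single $3$-point genus-zero Gromov--Witten invariant of degree one, of the form $\langle \sigma_n, \sigma_n, \sigma_{2n-3}\rangle_1$ (or an equivalent configuration dictated by Poincar\'e duality), and evaluate it by geometric enumeration of lines on $\IG(2,2n)$. The standard tool here is the "quantum-to-classical" principle: $3$-point degree-$d$ GW invariants on $\IG(2,2n)$ equal certain classical intersection numbers on an auxiliary two-step flag variety parametrizing lines, for which the signs and multiplicities can be read off directly. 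This is the step I expect to be the main obstacle, since the sign $(-1)^{n+1}$ depends delicately on orientation conventions for the isotropic Schubert calculus and on how the Pfaffian-type relation is normalized relative to the generating function for the Chern classes of~$\cV/\cU$.

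Having established surjectivity of $\varphi$, I would finish by showing $\dim_K A \leq \dim_K \QH(X) = 2n(n-1)$, which forces $\varphi$ to be an isomorphism. For the upper bound I would mimic the finite-dimensionality argument of \cite[Lemma~1.1]{BKT}: the leading terms of the relations in a suitable monomial order are the same as in the classical presentation (since $q\sigma_1$ has strictly lower $\sigma$-degree than~$\sigma_n^2$), so a Gr\"obner-type argument reduces the estimate of $\dim_K A$ to the classical dimension count already used in Proposition~\ref{Prop.: Presentation for coh I}. Since $\dim_K \QH(X) = \dim_{\mathbb{Q}} H^*(X,\mathbb{Q}) = 2n(n-1)$ by flatness of the quantum deformation, the two dimensions coincide and $\varphi$ is an isomorphism, completing the proof.
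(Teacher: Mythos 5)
The statement you are trying to prove is not proved in this paper at all: it is stated with the attribution ``[\cite{BKT}, Theorem~1.5]'' and is simply imported from Buch--Kresch--Tamvakis, who establish it (for general $\IG(m,2n)$) via their quantum Pieri and Giambelli machinery. So there is no in-paper proof to compare against; the only place the authors run an argument of the flavour you propose is in the \emph{proof of Corollary~\ref{Cor.: Presentation for small QH}}, where they deform the \emph{other} presentation (in $a_i,b_i$) and, crucially, that presentation has been arranged so that the degree-$2n$ relation has a single term $a_2^2b_{n-2}$, reducing the computation to one invariant $\langle a_2, a_2 b_{n-2}, \ell\rangle_1$. You are essentially trying to run that same strategy on the $\sigma$-presentation directly, and two things go wrong.

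First, the invariant you name is dimensionally inconsistent. With $\dim\IG(2,2n)=4n-5$ and index $2n-1$, a nonzero $\langle\,\cdot,\cdot,\cdot\,\rangle_1$ requires the Chow degrees of the three insertions to sum to $6n-6$. Your candidate $\langle\sigma_n,\sigma_n,\sigma_{2n-3}\rangle_1$ sums to $4n-3$, which is never $6n-6$. The correct pairing class for extracting the coefficient of $q\sigma_1$ is Poincar\'e dual to $\sigma_1$, hence of codimension $4n-6$, i.e.\ the class $\ell$ of a line (exactly as in Corollary~\ref{Cor.: Presentation for small QH}), not a special Schubert class of codimension $2n-3$.

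Second, and more substantially, in the $\sigma$-presentation the degree-$2n$ relation $\sigma_n^2 + 2\sum_{i=1}^{n-2}(-1)^i\sigma_{n+i}\sigma_{n-i}$ has $n-1$ quadratic terms, each of which can acquire its own $q\sigma_1$-correction: one must evaluate $\langle\sigma_n,\sigma_n,\ell\rangle_1$ \emph{and} $\langle\sigma_{n+i},\sigma_{n-i},\ell\rangle_1$ for all $i$, and these have no a priori reason to vanish. Reducing ``to the computation of a single GW invariant'' is therefore not available in this presentation, which is precisely why the paper first passes to the $a_i,b_i$-presentation before computing any invariant. Your degree analysis (only the degree-$2n$ relation can deform, and the correction must be a scalar multiple of $q\sigma_1$), and your Gr\"obner/dimension-count strategy for the final isomorphism, are both correct and standard; the gap is the geometric content in the middle, which in fact constitutes the bulk of the BKT argument.
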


\medskip

Combining the above theorem with Proposition \ref{Prop.: Presentation for coh II} we arrive at the following statement.
\begin{cor}\label{Cor.: Presentation for small QH}
The small quantum cohomology ring $\QH(X)$ is isomorphic to the quotient of the ring $K[a_1,a_2,b_1,\cdots,b_{n-2}]$ by the ideal generated by 
\begin{align}\label{Eq.: Relations for small QH II}
(1 + (2a_2 - a_1^2) x^2 + a_2^2 x^4)(1 + b_1 x^2 + \cdots + b_{n-2}
  x^{2n-4}) = 1 - q a_1 x^{2n}.
\end{align}

\begin{proof}
 
 We need to find the quantum deformation of \eqref{Eq.: Relation in Presentation II}. Since the index of $X$ is $2n-1$, there are no quantum corrections except possibly in degrees $2n-1$ and $2n$. Furthermore, as  \eqref{Eq.: Relation in Presentation II} has no terms of degree $2n-1$, we only need to check the deformation in degree $2n$. Explicitly we need to check the relation $a_2^2b_{n-2} = -qa_1$. Note that since $a_2^2b_{n-2}$ has degree $2n$ and since $a_2^2b_{n-2}$ vanishes in $H^*(X,\QQ)$ (see Proposition \ref{Prop.: Presentation for coh II}), we have $a_2^2b_{n-2} = \lambda a_1 q$ with $\lambda = - \langle a_2,a_2b_{n-2},\ell \rangle_1$ and $\ell$ the class of a line in $X$.
 
By the definition of $a_i$'s we have $a_2 = c_2(\mathcal{U})$, and so $a_2$ is a Schubert class (in fact, we have $a_2 = \sigma_{1,1}$). Further, using basic properties of Chern classes, and Formulas \eqref{Eq.: Tautological S.E.S I}--\eqref{Eq.: Tautological S.E.S II} one obtains the equality of cohomology classes
$$
a_2b_{n-2}  = \sigma_{2n-2}.
$$
Therefore, $a_2b_{n-2}$ is also a Schubert class. As $\ell$ is yet again a Schubert class, all insertions in the GW invariant $\langle a_2,a_2b_{n-2},\ell \rangle_1$ are Schubert classes in $X$. One can compute such an invariant either by using methods of Section \ref{sec:4points}, or the quantum-to-classical principle for degree $1$ invariants from \cite[Section 4]{ChPe}, and obtains $\langle a_2,a_2b_{n-2},\ell \rangle_1 = 1$.
\end{proof}

\end{cor}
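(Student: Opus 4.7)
The plan is to combine Theorem \ref{thm-bkt}, which presents $\QH(X)$ in the Schubert variables $\sigma_i$, with Proposition \ref{Prop.: Presentation for coh II}, which compresses the classical relations into the single master equation
\[
P(x)\,P(-x)\,Q(x) = 1,
\]
where $P(x)=1+a_1x+a_2x^2$ and $Q(x)=1+b_1x^2+\cdots+b_{n-2}x^{2n-4}$. The task then is to pin down the unique quantum deformation of this master relation and identify it with the right hand side of the displayed formula.

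First I would run a degree analysis on each coefficient of $x^k$ in $P(x)P(-x)Q(x)-1$, viewed as a homogeneous polynomial relation of cohomological degree $k$ in the $a_i,b_j$. Since $\deg q = 2n-1$, any quantum correction at degree $k$ must take the form $q\cdot(\text{class of degree } k-(2n-1))$. For $k\le 2n-2$ the degree budget for such a correction is negative, so the classical relation survives unchanged. For $k=2n-1$ the coefficient is identically zero as a polynomial (the left hand side is even in $x$), so there is nothing to deform. Only for $k=2n$, whose coefficient equals $a_2^2 b_{n-2}$, is a nontrivial correction possible, and it must be a scalar multiple of $qa_1$ because $a_1$ is the only degree-$1$ generator. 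Hence the quantized master relation must take the shape
\[
P(x)\,P(-x)\,Q(x) = 1 + \lambda\, q a_1\, x^{2n}
\]
for a single constant $\lambda$, and the corollary reduces to showing $\lambda=-1$.

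To identify $\lambda$, I would translate the identity $a_2^2 b_{n-2}=\lambda\, q a_1$ in $\QH(X)$ into a three-point degree-one Gromov--Witten invariant by pairing with a Poincar\'e dual Schubert class; this gives $\lambda = -\langle a_2,\, a_2 b_{n-2},\, \ell\rangle_1$, with $\ell$ the class of a line in $X$. As a preparatory step, using the Whitney formula together with the exact sequences \eqref{Eq.: Tautological S.E.S I} and \eqref{Eq.: Tautological S.E.S II}, I would identify $a_2 b_{n-2}$ with the top Schubert class $\sigma_{2n-2}=c_{2n-2}(\cV/\cU)$, so that all three insertions in the invariant are genuine Schubert classes on $\IG(2,2n)$.

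The main obstacle is the actual evaluation of this degree-one three-point invariant. I would attack it via the quantum-to-classical principle of \cite{ChPe}, which rewrites a degree-one invariant on $\IG(2,2n)$ as a classical intersection number on an auxiliary two-step flag variety parametrizing lines through a generic point; alternatively, the four-point techniques developed in Section \ref{sec:4points} provide a direct path to the same number. Either route should return the value $\langle a_2,\, a_2 b_{n-2},\, \ell\rangle_1 = 1$, giving $\lambda=-1$ and hence the displayed quantum deformation of the master relation.
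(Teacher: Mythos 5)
Your proposal matches the paper's proof step for step: the same degree analysis showing that only the coefficient of $x^{2n}$ can acquire a quantum correction, the same reduction of that correction to $\lambda = -\langle a_2, a_2 b_{n-2}, \ell\rangle_1$, the same identification $a_2 b_{n-2} = \sigma_{2n-2}$ via the tautological exact sequences, and the same two proposed evaluation methods (the four-point techniques of Section \ref{sec:4points} or the quantum-to-classical principle of \cite{ChPe}) to get $\lambda = -1$. No meaningful deviation.
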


\subsection{Structure of $\QH(X)$}
\label{SubSec.: Structure of QH}

In this paragraph we decompose the $K$-algebra $\QH(X)$ as a direct product, or, equivalently, we decompose the $K$-scheme $\spec(\QH(X))$ into connected components.

\begin{prop}\label{Prop.: Decomposition of QH}
The scheme $\spec (\QH(X))$ is the disjoint union of $(2n-1)(n-1)$ reduced points $\spec(K)$ and one fat point $\spec(K[\varepsilon]/(\varepsilon^{n-1}))$.
\end{prop}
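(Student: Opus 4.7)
The plan is to use Corollary~\ref{Cor.: Presentation for small QH} to present $\QH(X)$ as a quotient of $K[a_1,a_2]$ by two explicit relations, and then split the spectrum via the Chinese Remainder Theorem.

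Setting $y=x^2$ in \eqref{Eq.: Relations for small QH II} and comparing coefficients of $y^k$: for $1\le k\le n-2$ these equations recursively determine each $b_k$ as a polynomial $\phi_k\in K[a_1,a_2]$, while the coefficients of $y^{n-1}$ and $y^n$ produce two remaining relations $\phi_{n-1}=0$ and $\phi_n+qa_1=0$. One identifies $\phi_k=h_k(\mu,\nu)$, the complete symmetric polynomial in the two roots $\mu,\nu$ of $\lambda^2-(a_1^2-2a_2)\lambda+a_2^2$. Over $\bar K$ one has $\phi_{n-1}=\prod_{k=1}^{n-1}(\mu-\omega^k\nu)$ with $\omega$ a primitive $n$-th root of unity, and pairing $k\leftrightarrow n-k$ rewrites this as a product of $n-1$ pairwise distinct linear factors $L_i=a_1^2-c_i a_2$ with $c_i\in K^*$.

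The key structural fact is that both $\phi_{n-1}$ and $\phi_n$ lie in $K[a_1^2,a_2]$ (by $\mu\leftrightarrow\nu$ symmetry), with ``constant-in-$a_1$'' terms $\phi_k|_{a_1=0}=(k+1)(-a_2)^k$. Combined with $\phi_{n-1}=n(-a_2)^{n-1}+a_1^2 F$ this shows $a_2^n\in(a_1^2,\phi_{n-1})$ (using $n\neq 0$ in characteristic zero), whence $\phi_n\in(a_1^2,\phi_{n-1})$. Writing $\phi_n=a_1^2 G+\phi_{n-1}H$ one obtains
\[
(\phi_{n-1},\phi_n+qa_1)=(\phi_{n-1},a_1(a_1G+q)).
\]
Since the sum of $(\phi_{n-1},a_1)$ and $(\phi_{n-1},a_1G+q)$ contains $q\in K^*$, it equals the unit ideal, and the Chinese Remainder Theorem yields
\[
\QH(X)\;\cong\;K[a_1,a_2]/(\phi_{n-1},a_1)\;\times\;K[a_1,a_2]/(\phi_{n-1},a_1G+q).
\]

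The first factor collapses to $K[a_2]/(a_2^{n-1})\cong K[\varepsilon]/\varepsilon^{n-1}$, giving the fat point. For the second factor, restricting to each branch $V(L_i)$ turns $a_1G+q$ into a polynomial of degree $2n-1$ in $a_1$ with nonzero constant term $q$, hence has $2n-1$ simple roots; different branches meet only at the origin, which lies off the vanishing locus of $a_1G+q$. This contributes $(n-1)(2n-1)$ reduced points and concludes the proof. The main obstacle I anticipate is the divisibility claim $\phi_n\in(a_1^2,\phi_{n-1})$, which requires carefully matching the ``constant-in-$a_1$'' parts of $\phi_k$ modulo $\phi_{n-1}$. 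The distinctness of the $c_i$'s in the factorization of $\phi_{n-1}$ follows from elementary trigonometric identities and should be routine.
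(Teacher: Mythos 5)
Your overall strategy — eliminate the $b_i$ and study the resulting pair of relations in $K[a_1,a_2]$ — is the same as the paper's, but the subsequent split via the Chinese Remainder Theorem is a nice structural variant of the paper's argument, which instead pulls back to the double cover $a_1=z_1+z_2$, $a_2=z_1z_2$, directly counts $(2n-2)(2n-1)$ solutions with $z_1\neq z_2\neq 0$, and closes with a dimension count. Your two preparatory claims are fine: $\phi_n\in(a_1^2,\phi_{n-1})$ follows at once from $\phi_k|_{a_1=0}=(k+1)(-a_2)^k$ and $n\neq 0$, and the distinctness of the $c_i$ is routine (on the double cover $\phi_{n-1}=\prod_{\zeta^{2n}=1,\,\zeta\neq\pm1}(z_2-\zeta z_1)$, and pairing $\zeta\leftrightarrow\zeta^{-1}$ gives $c_\zeta=(1+\zeta)^2/\zeta=2+\zeta+\zeta^{-1}$, injective on such pairs).

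The genuine gap is the step ``degree $2n-1$ in $a_1$ with nonzero constant term $q$, hence has $2n-1$ simple roots.'' This implication is false for a general polynomial; a nonzero constant term only excludes $t=0$ as a root and says nothing about multiplicities. What saves the argument — and what you need to make explicit — is that $p_i$ has a very special form forced by weighted homogeneity: since $\phi_n$ is homogeneous of degree $2n$ with $\deg a_1=1$, $\deg a_2=2$, and $V(L_i)$ is parametrized by $(a_1,a_2)=(t,t^2/c_i)$, the restriction $\phi_n|_{V(L_i)}$ is a single monomial $\gamma_i t^{2n}$, and using $\phi_{n-1}|_{V(L_i)}=0$ you get $p_i(t)=\gamma_i t^{2n-1}+q$. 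This does have $2n-1$ simple roots, \emph{provided} $\gamma_i\neq 0$, which is a second thing you must verify: writing $\mu=t^2\alpha_i$, $\nu=t^2\beta_i$ with $\alpha_i\beta_i=c_i^{-2}\neq 0$ and $h_{n-1}(\alpha_i,\beta_i)=0$, the recursion $h_k=(\alpha+\beta)h_{k-1}-\alpha\beta h_{k-2}$ together with $h_0=1$ gives $\gamma_i=h_n(\alpha_i,\beta_i)=-\alpha_i\beta_i h_{n-2}(\alpha_i,\beta_i)\neq 0$. Finally, passing from $(\phi_{n-1},a_1G+q)$ to $\prod_i K[a_1,a_2]/(L_i,a_1G+q)$ a priori only gives a surjection, so you should either check $\bigcap_i(L_i,a_1G+q)=(\phi_{n-1},a_1G+q)$ directly, or do as the paper does and compare the count of distinct points with the known total dimension $\dim_K\QH(X)=2n(n-1)$, which forces reducedness for free.
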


\begin{proof}
By the presentation of $\QH(X)$ described in Corollary \ref{Cor.: Presentation for small QH}, the $K$-scheme $\spec(\QH(X))$ is given as a closed subscheme of the affine space $\mathbb{A}^n =\spec (K[a_1,a_2, b_1, \dots, b_{n-2}])$ defined by the equations
\begin{align}\label{Eq.: System for QH(IG(2,2n))} \notag
& 2a_2 -a_1^2 + b_1 = 0 \\ \notag
& a_2^2 + b_1(2a_2 -a_1^2)+b_2 =0 \\ \notag
& b_1a_2^2 + b_2(2a_2 -a_1^2)+b_3 =0 \\ \notag
& \dots \\
& b_{i-1} a_2^2 + b_i(2a_2 -a_1^2) + b_{i+1} =0 \\ \notag
& \dots \\ \notag
& b_{n-4} a_2^2 + b_{n-3}(2a_2 -a_1^2) + b_{n-2} =0 \\ \notag
& b_{n-3} a_2^2 + b_{n-2}(2a_2 -a_1^2)  =0 \\ \notag
& b_{n-2}a_2^2 + qa_1 =0 .
\end{align}
It is clear that the origin of $\mathbb{A}^n$ is a solution of this system. Moreover, this solution corresponds to a fat point of $\spec(\QH(X))$. Indeed, it is easy to see that the Zariski tangent space of \eqref{Eq.: System for QH(IG(2,2n))} at the origin is one-dimensional. Thus, the origin is a fat point of $\spec(\QH(X))$. Let $A$ be the corresponding factor of $\QH(X)$. Thus, we have the direct product decomposition
\begin{align}\label{Eq.: Direct product decomposition for QH}
\QH(X) = A \times B,
\end{align}
where $B$ corresponds to components of $\spec(\QH(X))$ supported outside the origin.

Setting $a_1=0$ in \eqref{Eq.: System for QH(IG(2,2n))} we obtain $b_i=(-1)^i i a_2^i$. Therefore, eliminating the $b_i$'s, we get an isomorphism of $K$-algebras 
$$
\QH(X)/(a_1) \simeq A/(a_1) \simeq K[a_2]/(a_2^{n-1}).
$$ 
In particular, we have a surjective homomorphism of $K$-algebras 
\begin{align}\label{Eq.: ring homo origin}
A \to K[a_2]/(a_2^{n-1}).
\end{align}
Thus, the dimension of $A$ is at least $n-1$. In fact, below we will see that $\dim_K A=n-1$.

\smallskip

Now let us examine the structure of $B$, i.e. we need to study
solutions of \eqref{Eq.: System for QH(IG(2,2n))} different from the
origin. It is convenient to rewrite \eqref{Eq.: Relations for small QH II} as 
\begin{align*}
(z^4 + (2a_2 - a_1^2) z^2 + a_2^2)(z^{2n-4} + b_1 z^{2n-6} + \cdots +
  b_{n-2}) = z^{2n} - a_1,
\end{align*}
where we set $q = 1$ for convenience.
By making the substitution $a_1 = z_1 + z_2, a_2 = z_1 z_2$, and putting $Q(z)=z^{2n-4} + b_1 z^{2n-6} + \cdots + b_{n-2}$ we arrive at
\begin{align}\label{Eq.: equation in z}
(z^2 -z_1^2)(z^2 - z_2^2) Q(z) = z^{2n} - (z_1 + z_2).
\end{align}
In geometric terms this manipulation corresponds to pulling back our system \eqref{Eq.: System for QH(IG(2,2n))} with respect to the morphism
\begin{align*}
& \spec (K[z_1,z_2, b_1, \dots, b_{n-2}]) \to \spec (K[a_1,a_2, b_1, \dots, b_{n-2}]) \\
\intertext{defined by}
& a_1 \mapsto z_1 +z_2, \quad   a_2 \mapsto z_1z_2 ,   \quad    b_i \mapsto b_i.
\end{align*}
It is a double cover unramified outside of the locus $z_1=z_2$.

Let us count solutions of \eqref{Eq.: equation in z} for which $z_1 \neq z_2$ and both of them are non-zero. This reduces to counting pairs $z_1, z_2$ satisfying
\begin{align}\label{Eq.: Aux Eq}
& z_1^{2n}=z_1+z_2 \\ \notag
& z_2^{2n}=z_1+z_2.
\end{align}
Eliminating $z_2$ using the first equation we obtain that $z_1$ must be a solution of
\begin{align*}
(z_1^{2n}-z_1)^{2n}= z_1^{2n}.
\end{align*}
Now it is straightforward to count that there are $(2n-2)(2n-1)$ distinct pairs $(z_1, z_2)$, with $z_1 \neq z_2 \neq 0$, satisfying the system \eqref{Eq.: Aux Eq}.

In terms of the original system \eqref{Eq.: System for QH(IG(2,2n))} the above computation means that there are at least $(n-1)(2n-1)$ distinct solutions outside of the origin. Note that we have divided the number of solutions by 2, since the initial count was on the double cover. In other words the dimension of $B$ is at least $(n-1)(2n-1)$.

\smallskip

Up to now we have shown that $\dim_K (A) \geq n-1$ and  $\dim_K (B) \geq (n-1)(2n-1)$. Since $\dim_K(\QH(X)) = 2n(n-1)$, this implies that $\dim_K (A) = n-1$ and  $\dim_K (B) = (n-1)(2n-1)$. Hence, \eqref{Eq.: ring homo origin} is an isomorphism.  
\end{proof}

\begin{rmk}
The above proposition implies that $\QH(\IG(2,2n))$ is not semisimple, as was already mentioned in the introduction. Moreover, we have an explicit description of the non-semisimple factor as $K[\varepsilon]/(\varepsilon^{n-1})$. Note that the latter is the Milnor algebra of the $A_{n-1}$-singularity.
\end{rmk}

\section{Four-point Gromov--Witten invariants}
\label{sec:4points}

In Section \ref{Sec.: BQH} we will study the deformation of $\QH(\IG(2,2n))$ in the big quantum cohomology $\BQH(\IG(2,2n))$ in the direction of the special Schubert class $\sigma_2$. In particular, we will need the values of Gromov--Witten invariants
$$\langle \pt,\sigma_2,\sigma_i,\sigma_j \rangle_1,$$
which we compute in this section. Note that for dimension reasons these invariants vanish unless $i + j = 2n-2$. 

Morally a GW invariant $\langle \D_1, \dots , \D_n \rangle_d$ counts the number of rational curves of degree $d$ on $X$ incident to general representatives of the cohomology classes $\D_i$. Even though for a general variety $X$ this enumerative meaning fails, in our case $X=\IG(2,2n)$ we have the following fact.

\begin{lemma}\label{Lemma: enumerative meaning}
The Gromov--Witten invariant $\langle \pt,\sigma_2,\sigma_i,\sigma_j \rangle_1$ is the number of lines meeting general representatives of the cohomology classes $\pt$, $\sigma_2$, $\sigma_i$ and $\sigma_j$. Furthermore, given any open dense subset of the set of lines, the above lines can be chosen in this open subset.
\end{lemma}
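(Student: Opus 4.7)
The plan is to derive the lemma from Kleiman--Bertini applied twice: once to transfer the Gromov--Witten invariant into a set-theoretic count, and once to position the resulting lines inside the prescribed open subset. Since $X = \IG(2,2n) \cong \Sp(2n)/P$ is a rational homogeneous space, its tangent bundle is globally generated, so $X$ is convex in the sense of Kontsevich. By standard results of Fulton--Pandharipande the moduli space $\overline{M}_{0,4}(X,1)$ is then smooth of the expected dimension $(4n-5) + (2n-1) + 1 = 6n - 5$, and its virtual fundamental class agrees with the ordinary one. Consequently the invariant $\langle \pt, \sigma_2, \sigma_i, \sigma_j \rangle_1$ equals the number of points, counted with multiplicity, in $\mathrm{ev}^{-1}(W_1 \times W_2 \times W_3 \times W_4)$ for any quadruple of Schubert cycles $W_k$ representing the four classes, where $\mathrm{ev} \colon \overline{M}_{0,4}(X,1) \to X^4$ is the total evaluation morphism.

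Since $G = \Sp(2n)$ acts transitively on $X$, the product $G^4$ acts transitively on $X^4$. Kleiman's transversality theorem applied to $\mathrm{ev}$ and to $W_1 \times W_2 \times W_3 \times W_4 \subset X^4$ then yields, for a generic tuple $(g_1, \dots, g_4) \in G^4$, a smooth zero-dimensional fibre $\mathrm{ev}^{-1}(g_1 W_1 \times \cdots \times g_4 W_4)$; this is therefore a reduced finite set whose cardinality is exactly the Gromov--Witten invariant. A further dimension count, together with another application of Kleiman to each boundary stratum of $\overline{M}_{0,4}(X,1)$, ensures that no boundary point contributes, so every point in the fibre corresponds to an honest $4$-pointed line $(L, p_1, \dots, p_4)$ with $p_k \in g_k W_k$. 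Finally, applying Kleiman once more to the universal incidence $\{(L,p) \in F \times X : p \in L \cap g_k W_k\} \to F$, where $F$ denotes the $G$-homogeneous variety of lines on $X$, shows that generically each line $L$ appearing in the count meets $g_k W_k$ transversely in a single point; consequently forgetting the marked points yields a bijection with the set of lines meeting all four cycles.

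For the last assertion, let $U \subset F$ be any dense open subset and let $L_1, \dots, L_N$ be the finite set of lines produced above for some fixed generic choice of translates. For each $i$ the subset $\{h \in G : h \cdot L_i \in U\} \subset G$ is non-empty and open, since $G$ acts transitively on $F$; the finite intersection over $i$ is therefore also non-empty and open. Picking any $h$ in this intersection and simultaneously replacing each $g_k$ by $h g_k$ translates all four cycles by $h$ and, by $G$-equivariance of $\mathrm{ev}$, transports the generic fibre to a set of $4$-pointed lines whose underlying lines are exactly $h \cdot L_1, \dots, h \cdot L_N$, all of which now lie in $U$.

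The main obstacle, I expect, will be the multiplicity-one/bijection claim in the second paragraph: ruling out simultaneously any boundary contribution and the possibility that some line $L$ in the count meets some $g_k W_k$ in more than one point requires a careful use of Kleiman--Bertini not only for $\mathrm{ev}$ itself but also for the universal line--cycle incidence over $F$. Everything else is either a routine dimension count or a standard application of the transitivity of the $\Sp(2n)$-action on $X$ and on $F$.
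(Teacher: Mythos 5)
Your argument is correct and takes essentially the same route as the paper: the paper's proof is a one-line citation to Kleiman's transversality theorem and \cite[Lemma~14]{FuPa}, which for a convex homogeneous variety is exactly the package you unwind (smooth unobstructed moduli space, Kleiman applied to the evaluation map and to boundary strata, and transitivity of $\Sp_{2n}$ on the Hilbert scheme of lines to arrange the last assertion). Your expansion, including the caveat about ruling out multiple intersections via the universal incidence, fills in precisely the details that the cited references supply.
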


\begin{proof}
It follows from Kleiman--Bertini's Theorem \cite{K} (see also \cite[Lemma 14]{FuPa}).
\end{proof}

\subsection{Lines on $\IG(2,2n)$} 

We recall the description of the Hilbert scheme of lines on $\IG(2,2n)$ from \cite[Theorem 4.3]{MaLa} and \cite[Proposition 3]{strickland}. First start with the Hilbert scheme of lines on the ordinary Grassmannian $\G(2,2n)$. The Hilbert scheme of lines on a variety $X$ and the universal line are denoted $F_1(X)$ and $Z_1(X)$ respectively.

A line on $\G(2,2n)$ is given by a pair $(W_1,W_3)$ of nested subspaces of $\C^{2n}$ of dimension $1$ and $3$ respectively.  Geometrically the corresponding line is given by 
\begin{align}\label{Eq.: Nested subspaces}
\ell(W_1,W_3) = \{ V_2 \in \G(2,2n) \ | \ W_1 \subset V_2 \subset W_3 \}.
\end{align}
Extending the above description to families one can show that there exist isomorphisms 
\begin{align*}
& F_1(\G(2,2n)) \simeq \Fl(1,3;2n) \\
& Z_1(\G(2,2n)) \simeq \Fl(1,2,3;2n).
\end{align*}
Moreover, the natural projections from $Z_1(\G(2,2n)) \subset \G(2,2n) \times F_1(\G(2,2n))$ to $\G(2,2n)$ and $F_1(\G(2,2n))$ are given by forgetting a part of the flag
\begin{align*}
\xymatrix{
\Fl(1,2,3;2n) \ar[r]^<<<<{f} \ar[d]_p  & \G(2,2n) \\
\Fl(1,3;2n) 
}
\end{align*}
Now if the line $\ell(W_1,W_3)$ is contained in $\IG(2,2n)$, we have that any two dimensional subspace $V_2$ with $W_1 \subset V_2 \subset W_3$ is isotropic. This is easily seen to be equivalent to the fact that $W_1 \subset W_3^\perp$ or equivalently $W_1 \subset \ker(\omega\vert_{W_3})$ (note that in \cite[Proposition 3]{strickland}, this last condition is missing). In particular we have
$$F_1(\IG(2,2n)) = \{ \ell(W_1,W_3) \in F_1(\G(2,2n)) \ | \ \omega(W_1,W_3) = 0\}.$$
Schematically, this can be described as the zero locus of $\omega : \mathcal{W}_1 \otimes \mathcal{W}_3 \to \cO_{F_1(\G(2,2n))}$ in $F_1(\G(2,2n))$, where $\mathcal{W}_1$ and $\mathcal{W}_3$ are the tautological subbundles. In particular a general element $\ell(W_1,W_3) \in F_1(\IG(2,2n))$ is determined by $W_3$ since $W_1 = \ker(\omega\vert_{W_3})$. We make this more precise. Let $\G(3,2n)$ be the Grassmannian of $3$-dimensional vector subspaces in $\C^{2n}$ and $\IG(3,2n)$ the closed subset representing subspaces isotropic for the form $\omega$.

\begin{prop}
The Hilbert scheme $F_1(\IG(2,2n))$ is the blow-up of $\IG(3,2n)$ in $\G(3,2n)$.
\end{prop}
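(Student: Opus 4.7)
The statement to verify is that $F_1(\IG(2,2n))$ is the blow-up $\mathrm{Bl}_{\IG(3,2n)}\G(3,2n)$ of $\G(3,2n)$ along the closed subvariety $\IG(3,2n)$. The natural candidate for the blow-down morphism is
\begin{align*}
f \colon F_1(\IG(2,2n)) \to \G(3,2n), \qquad (W_1,W_3) \mapsto W_3,
\end{align*}
induced by restricting the forgetful projection $\Fl(1,3;2n) \to \G(3,2n)$. My plan is to analyze the fibers of $f$, identify its exceptional locus with the projectivized normal bundle of $\IG(3,2n)$ in $\G(3,2n)$, and then invoke the universal property of the blow-up.

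First I would study the fibers of $f$. For $W_3 \in \G(3,2n)$, the form $\omega\vert_{W_3}$ is skew on a $3$-dimensional space, so it has even rank, i.e.\ $0$ or $2$. When $W_3 \notin \IG(3,2n)$ this rank is $2$, the radical $\ker(\omega\vert_{W_3}) \subset W_3$ is a unique line, and the fiber is a single reduced point; when $W_3 \in \IG(3,2n)$ the form vanishes and the fiber is all of $\bP(W_3) \simeq \bP^2$. Thus $f$ is birational with exceptional set $E := f^{-1}(\IG(3,2n))$ fibered in $\bP^2$'s over $\IG(3,2n)$.

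Next, I would check smoothness and compute the normal bundles on both sides. The variety $F_1(\IG(2,2n))$ is cut out in $\Fl(1,3;2n)$ by the section $\omega \colon \cW_1 \otimes (\cW_3/\cW_1) \to \cO$ (the contribution $\omega(W_1,W_1)$ vanishes automatically by skew-symmetry). Since $\dim F_1(\IG(2,2n)) = 6n-9$ equals the expected codimension-two drop inside the $(6n-7)$-dimensional $\Fl(1,3;2n)$, the section is regular and $F_1(\IG(2,2n))$ is smooth; in particular $E$, of pure codimension one, is automatically Cartier. On the other side, $\IG(3,2n)$ is the zero locus in $\G(3,2n)$ of the regular section of $\Lambda^2 \cU^*$ induced by $\omega$, so its normal bundle is $N = \Lambda^2 \cU^*\vert_{\IG(3,2n)}$. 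For the rank-$3$ tautological bundle one has $\Lambda^2 \cU^* \simeq \cU \otimes \det(\cU)^*$, hence $\bP(N) \simeq \bP(\cU)$, which is precisely the $\bP^2$-bundle structure on $E$.

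Once the scheme-theoretic preimage $f^{-1}(\IG(3,2n))$ is identified with the Cartier divisor $E$, the universal property of the blow-up yields a canonical factorization $F_1(\IG(2,2n)) \to \mathrm{Bl}_{\IG(3,2n)}\G(3,2n) \to \G(3,2n)$; this induced map is an isomorphism off $E$ and matches the $\bP(\cU)$-bundle structures on the exceptional divisors, so it is birational and bijective on points between smooth projective varieties of the same dimension $6n-9$, hence an isomorphism by Zariski's main theorem. I expect the main obstacle to be exactly this scheme-theoretic identification: verifying that the pulled-back ideal of $\IG(3,2n)$ is an invertible sheaf on $F_1(\IG(2,2n))$ rather than carrying extra multiplicity along $E$. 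This should reduce to an explicit local transversality computation near a general point of $E$, using the rank-two section description of $F_1(\IG(2,2n))$ inside $\Fl(1,3;2n)$.
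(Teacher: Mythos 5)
Your strategy coincides with the paper's: both consider the forgetful map $(W_1,W_3)\mapsto W_3$, identify the scheme-theoretic preimage of $\IG(3,2n)$ as a Cartier divisor, invoke the universal property of the blow-up, and close with Zariski's Main Theorem via a bijectivity check. The fiber analysis and the normal-bundle identification $N_{\IG(3,2n)/\G(3,2n)}\cong \Lambda^2\cU^*\cong \cU\otimes\det\cU^*$ are correct and make a good sanity check (the paper instead checks bijectivity on the $\bP^2$-fibers using $\Sp_{2n}$-homogeneity, without computing the normal bundle).

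However, you explicitly leave the crucial step unproved. You write that the main obstacle is ``verifying that the pulled-back ideal of $\IG(3,2n)$ is an invertible sheaf'' and that this ``should reduce to an explicit local transversality computation'' --- but no such computation is carried out, so as written the argument does not establish the hypothesis needed for the universal property of the blow-up. Note also that smoothness of $F_1(\IG(2,2n))$ plus purity of codimension one does not by itself make the \emph{inverse-image ideal sheaf} invertible: on a smooth variety every reduced divisor is Cartier, but $f^{-1}(I_{\IG(3,2n)})\cdot\cO_{F_1}$ could a priori have embedded components, in which case it would not be locally principal. So ``automatically Cartier'' needs justification at exactly the point you flag. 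The paper closes this gap cleanly and without any local computation: $\IG(3,2n)\subset\G(3,2n)$ is the zero locus of $\omega\colon\Lambda^2\cW_3\to\cO$, and on $F_1(\IG(2,2n))$ the tautological line $\cW_1$ lies in the radical of $\omega\vert_{\cW_3}$, so $\pi^*\omega$ factors through the everywhere-surjective bundle map $\Lambda^2\cW_3\twoheadrightarrow\Lambda^2(\cW_3/\cW_1)$; hence the inverse-image ideal sheaf equals the ideal of the zero locus of a section $s$ of the \emph{line bundle} $(\Lambda^2(\cW_3/\cW_1))^*$, and since $F_1(\IG(2,2n))$ is integral and $s$ is generically nonzero, this ideal is isomorphic to $\Lambda^2(\cW_3/\cW_1)$ and therefore invertible. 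If you supply this factorization argument (or a genuine local transversality computation) in place of the ``I expect\dots'' sentence, your proof is complete and essentially identical to the paper's.
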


\begin{proof}
  Since $F_1(\IG(2,2n))$ is a subscheme of $F_1(\G(2,2n)) = \Fl(1,3;2n)$, we have a morphism $\pi : F_1(\IG(2,2n)) \to \G(3,2n)$ defined by $(W_1,W_3) \mapsto W_3$ by forgetting $W_1$. We prove that this is the desired blow-up. Denote by $\mathcal{W}_1$ and $\mathcal{W}_3$ the tautological rank $1$ and $3$ bundles on $\Fl(1,3;2n)$ and $\G(3,2n)$. Recall that $\IG(3,2n)$ is defined by the vanishing of $\omega : \Lambda^2\mathcal{W}_3 \to \cO_{\G(3,2n)}$. Pulling back via $\pi$ and using the fact that $\mathcal{W}_1 \subset \ker(\omega\vert_{\mathcal{W}_3})$, we get that $\pi^*\omega$ factors as follows:
  $$\xymatrix{\Lambda^2\mathcal{W}_3 \ar[r] \ar[dr]_-{\pi^*\omega} & \Lambda^2(\mathcal{W}_3/\mathcal{W}_1) \ar[d]^-{s} \\
    & \cO_{F_1(\IG(2,2n))}.}$$
Since the top map is a morphism of vector bundles and never vanishes, the vanishing of $\pi^*\omega$ is equivalent to the vanishing of $s$. This just means that the transform of the ideal defining $\IG(2,2n)$ in $\G(2,2n)$ to $F_1(\IG(2,2n))$ is $\Lambda^2(\mathcal{W}_3/\mathcal{W}_1)$ and therefore invertible. By the universal property of blow-up, we deduce that $\pi$ factors through the blow-up $p : Bl_{\IG(3,2n)}(\G(3,2n))$ of $\IG(3,2n)$ in $\G(3,2n)$:
  $$\xymatrix{F_1(\IG(2,2n)) \ar[r]^-{f} \ar[dr]_-{\pi} & Bl_{\IG(3,2n)}(\G(3,2n)) \ar[d]^-{p} \\
    & \G(3,2n).}$$
Since $\IG(3,2n)$ and $\G(3,2n)$ are smooth, we only need to check that $f$ is bijective since Zariski's Main Theorem will imply that $f$ is an isomorphism. Note that the all diagram is $\Sp_{2n}$-equivariant and is bijective on the complement of $\IG(3,2n)$: the subspace $W_1$ is determined by $W_1 = \ker(\omega\vert_{W_3})$. Now the fiber $\pi^{-1}(W_3)$ for $W_3 \in \IG(3,2n)$ is isomorphic to $\p(W_3) \simeq \p^2$ and is homogeneous under the stabiliser of $W_3$ in $\Sp_{2n}$. Since the fiber of $p$ over $W_3$ is also isomorphic to $\p^2$ (the codimension of $\IG(3,2n)$ in $\G(3,2n)$ is $3$), the map $f$ must be bijective on the fiber. The result follows.
\end{proof}

The group of symplectic automorphisms $\Sp_{2n}$ acts on $\IG(2,2n)$ and $F_1( \IG(2,2n))$. In terms of \eqref{Eq.: Nested subspaces} there are two types of lines on $\IG(2,2n)$ corresponding to the two $\Sp_{2n}$-orbits on $F_1(\IG(2,2n))$:
\begin{itemize}
\item[(i)] If $W_3$ is not isotropic, then $W_1$ is the kernel of the symplectic form $\omega$ on $W_3$. Therefore, $W_1$ is completely determined by $W_3$. A line of this type will be denoted by $\ell(W_3)$. These lines form the open orbit of the $\Sp_{2n}$-action.  
\item[(ii)] If $W_3$ is isotropic, then $W_1$ is any one-dimensional subspace of $W_3$. These lines form the closed orbit of the $\Sp_{2n}$-action.
\end{itemize}
Note that in terms of the blow-up description the open orbit is $\pi^{-1}(\G(3,2n) \setminus \IG(3,2n))$ and the closed orbit is the exceptional divisor of the blow-up which is isomorphic to the isotropic flag variety $\textrm{IFl}(1,3;2n)$.

\subsection{Computation of the invariant}

Let $X$ be $\IG(2,2n)$. Denote by $Y$ the Hilbert scheme of lines on $X$ described above, and by $\Yo$ the open orbit of the $\Sp_{2n}$-action. In order to compute $\langle \pt,\sigma_2,\sigma_i,\sigma_j \rangle_1$, according to Lemma \ref{Lemma: enumerative meaning}, it is enough to consider lines in the open orbit $\Yo$.

For a closed subvariety $Z \subset X$ we define
$$
\Yo(Z) = \{ \ell \in \Yo \ | \ \ell \cap Z \neq \emptyset \},
$$
which is a closed subvariety of $\Yo$.

\begin{lemma}
(i) There exist identifications 
\begin{align*}
& \Yo(\{E_2\}) \overset{def}{=} \{ \ell(W_3) \in \Yo \ | \ E_2 \subset W_3\} \simeq \p(\bC^{2n}/E_2) \setminus \p(E_2^\perp/E_2) \\
& \Yo(Z(E_{2n - k -1})) = \{ \ell(W_3) \in \Yo \ | \ \dim(W_3 \cap E_{2n - k - 1}) \geq 1 \},
\end{align*}
where $\{E_2\}$ is the one-point subvariety of $X$ corresponding to an isotropic subspace $E_2 \subset \bC^{2n}$, and $Z(E_{2n - k -1})$ was defined in \eqref{Eq.: Schubert subvariety}.

\smallskip

(ii) If $E_2$ and $E_{2n - k - 1}$ are in general position, the intersection  $\Yo(\{E_2\}) \cap \Yo(Z(E_{2n - k -1}))$ is isomorphic to
$$\p \bigg( \big(E_2 + E_{2n - k - 1} \big) /E_2\bigg) \setminus \p \bigg( \big( (E_2 + E_{2n - k - 1}) \cap E_2^\perp \big)/E_2 \bigg),$$
which we view as a subvariety of $\p(\bC^{2n}/E_2) \setminus \p(E_2^\perp/E_2)$.
\end{lemma}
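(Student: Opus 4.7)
The main idea is to use the description of lines in the open orbit as $\ell(W_3)$ where $W_3$ is a non-isotropic $3$-plane and $W_1 = \ker(\omega\vert_{W_3})$, so that lines through a given point $E_2 \in X$ are controlled solely by $W_3 \supset E_2$, and we can parametrize them by $\p(\bC^{2n}/E_2)$.

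For part (i), first identification, I would argue as follows. A line $\ell(W_3) \in \Yo$ passes through the point $E_2 \in X$ if and only if $E_2$ occurs as one of the $V_2$ on the line, i.e.\ $E_2 \subset W_3$. Three-dimensional subspaces $W_3 \subset \bC^{2n}$ containing $E_2$ are in bijection with one-dimensional subspaces of $\bC^{2n}/E_2$, and this gives an isomorphism of the locus $\{E_2 \subset W_3\} \subset F_1(\G(2,2n))$ with $\p(\bC^{2n}/E_2)$. The only extra condition for $\ell(W_3)$ to lie in $\Yo$ (rather than in the closed orbit) is that $W_3$ is non-isotropic, i.e.\ $W_3 \not\subset E_2^\perp$; in the parametrization $\p(\bC^{2n}/E_2)$ this excludes exactly the linear subspace $\p(E_2^\perp/E_2)$, as desired. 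For the second identification, observe that since every $V_2$ on $\ell(W_3)$ is contained in $W_3$, if some such $V_2$ meets $E_{2n-k-1}$ nontrivially then so does $W_3$. Conversely, picking any nonzero $v \in W_3 \cap E_{2n-k-1}$, either $v \in W_1$ (in which case every $V_2$ on the line meets $E_{2n-k-1}$) or else $V_2 := W_1 + \langle v\rangle$ is a two-plane on the line meeting $E_{2n-k-1}$; either way $\ell(W_3) \cap Z(E_{2n-k-1}) \neq \emptyset$.

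For part (ii), I would intersect the two conditions inside the parametrization $\p(\bC^{2n}/E_2)$. Given $E_2 \subset W_3$, write $W_3 = E_2 \oplus \langle v\rangle$ for some $v$, whose class $[v] \in \p(\bC^{2n}/E_2)$ determines $W_3$. In generic position $E_2 \cap E_{2n-k-1} = 0$, and I claim $W_3 \cap E_{2n-k-1} \neq 0$ if and only if $[v] \in \p((E_2 + E_{2n-k-1})/E_2)$. Indeed, any nonzero $w \in W_3 \cap E_{2n-k-1}$ has the form $w = e + \lambda v$ with $e \in E_2$; the coefficient $\lambda$ cannot vanish (else $w \in E_2 \cap E_{2n-k-1} = 0$), so $v \in E_2 + E_{2n-k-1}$; the converse is immediate by adjusting a representative of $[v]$ modulo $E_2$ to land in $E_{2n-k-1}$. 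Combining with the non-isotropic condition from (i) yields the intersection
\[
\p\bigl((E_2 + E_{2n-k-1})/E_2\bigr) \ \setminus \ \p\bigl(E_2^\perp/E_2\bigr),
\]
and using $E_2 \subset E_2^\perp$ one identifies the subtracted locus with $\p\bigl(((E_2 + E_{2n-k-1}) \cap E_2^\perp)/E_2\bigr)$, giving the claimed formula.

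The only real point requiring care is the genericity step: I need to make sure that generic position of $E_2$ and $E_{2n-k-1}$ simultaneously forces $E_2 \cap E_{2n-k-1}=0$ (to rule out $\lambda=0$ in the argument above) and keeps the linear configuration of $E_2$, $E_{2n-k-1}$, $E_2^\perp$ in general position, so that the intersection $\p((E_2+E_{2n-k-1})/E_2) \cap \p(E_2^\perp/E_2)$ indeed equals $\p(((E_2+E_{2n-k-1}) \cap E_2^\perp)/E_2)$. This is just a linear-algebra check but is the only place where \emph{general position} is used; the rest of the argument is a straightforward unpacking of the parametrization of $\Yo$ by the blow-up description of $F_1(\IG(2,2n))$ from the preceding proposition.
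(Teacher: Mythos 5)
Your proposal is essentially correct and follows the same route as the paper: parametrize $\Yo(\{E_2\})$ by $\p(\bC^{2n}/E_2)$ via $W_3 \mapsto W_3/E_2$, note the non-isotropic condition removes $\p(E_2^\perp/E_2)$, and characterize $\Yo(Z(E_{2n-k-1}))$ by $\dim(W_3 \cap E_{2n-k-1}) \geq 1$; then for (ii) show, using $E_2 \cap E_{2n-k-1}=0$, that $W_3 \subset E_2 + E_{2n-k-1}$. Your converse in (i) (taking $V_2 = W_1 + \langle v\rangle$, which is automatically isotropic since $W_1 = \ker(\omega|_{W_3})$) is a correct and slightly more explicit version of what the paper says.

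Two points are worth tightening. First, your assertion that generic position forces $E_2 \cap E_{2n-k-1} = 0$ fails when $k = 0$: there $\dim E_{2n-1} = 2n - 1$ and a generic $E_2$ meets $E_{2n-1}$ in a line. The paper handles this case separately: for $k=0$ one has $E_2 + E_{2n-1} = \bC^{2n}$ and $\Yo(Z(E_{2n-1})) = \Yo$, so the intersection is all of $\Yo(\{E_2\})$, which agrees with the stated formula; your $\lambda \neq 0$ argument is then invoked only for $k \ge 1$. Second, your stated worry about whether the linear configuration needs to be generic for the identity
\[
\p\bigl((E_2 + E_{2n-k-1})/E_2\bigr) \cap \p\bigl(E_2^\perp/E_2\bigr) = \p\bigl(((E_2 + E_{2n-k-1}) \cap E_2^\perp)/E_2\bigr)
\]
is unfounded: for any two subspaces $A,B$ both containing a subspace $C$ one always has $(A/C) \cap (B/C) = (A \cap B)/C$, hence the identity of projective subspaces, with no genericity hypothesis. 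Thus the only place general position enters is exactly $E_2 \cap E_{2n-k-1} = 0$ for $k \ge 1$ (and $E_2 + E_{2n-1} = \bC^{2n}$ for $k=0$).
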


\begin{proof} 

(i) The first isomorphism just maps $W_3$ to $W_3/E_2$ and uses the fact that $W_3$ is non-isotropic iff $W_3/E_2$ is not contained in $E_2^\perp/E_2$. 

\smallskip

The second equality works as follows. Consider a point $V_2 \in (\ell(W_3) \cap Z(E_{2n - k -1})) \subset X$. Then, by definition of $\ell(W_3)$ and  $Z(E_{2n - k -1})$, we have $V_2 \subset W_3$ and $\dim(V_2 \cap E_{2n - k - 1}) \geq 1$. Thus, we get $\dim(W_3 \cap E_{2n - k - 1}) \geq 1$. Conversely, for $W_3$ satisfying $\dim(W_3 \cap E_{2n - k - 1}) \geq 1$, one can find a $2$-dimensional isotropic subspace $V_2 \subset W_3$ meeting $E_{2n - k - 1}$ non-trivially.

\smallskip

(ii) First, assume that $k = 0$. Then, we have $E_2 + E_{2n - 1} = \bC^{2n}$, as $E_2$ and $E_{2n - 1}$ are assumed to be in general position. Further, since we clearly have $\Yo(Z(E_{2n -1})) = \Yo$, the claim follows.

\smallskip

Now assume that $k \geq 1$. Since  $E_2$ and $E_{2n -k- 1}$ are in general position, we have $E_2 \cap E_{2n - k - 1} = 0$. If $\ell(W_3)$ is a point of $\Yo(\{E_2\}) \cap \Yo(Z(E_{2n - k -1}))$, then we have the inclusion $W_3 \subset E_2 + E_{2n - k - 1}$. Therefore, we have $W_3/E_2 \subset (E_2 + E_{2n - k - 1})/E_2$, which proves the claim. 
\end{proof}

\begin{cor}\label{Cor.: 4-point invariant}
 $\langle \pt,\sigma_2,\sigma_i,\sigma_j \rangle_1 = \delta_{i + j,2n-2}.$
\end{cor}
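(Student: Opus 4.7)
The plan is to reduce everything to a transverse intersection of linear subspaces in a projective space via the previous lemma, which describes the subvarieties $\Yo(\{E_2\})$ and $\Yo(Z(E_{2n-k-1}))$ of the open orbit $\Yo$ of lines.

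First I would handle the vanishing when $i+j \neq 2n-2$: this is the dimension count promised in the paragraph preceding Lemma on the enumerative meaning, since $\dim X = 4n-5$ and the index is $2n-1$, so the expected dimension of the moduli of degree~$1$ stable maps with four marked points forces $(4n-5) + 2 + i + j = (4n-5) + (2n-1) + 1$, i.e. $i+j = 2n-2$.

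Now assume $i+j = 2n-2$. By Lemma on the enumerative meaning and the fact that $\Yo$ is an open dense subset of the Hilbert scheme of lines, it suffices to count lines $\ell(W_3) \in \Yo$ meeting general representatives of $\{E_2\}$, $Z(E_{2n-3})$, $Z(E_{2n-i-1})$ and $Z(E_{2n-j-1})$, chosen in general position. Combining parts (i) and (ii) of the previous lemma, the relevant locus inside $\Yo(\{E_2\}) \simeq \p(\bC^{2n}/E_2) \setminus \p(E_2^\perp/E_2)$ is
\[
L_2 \cap L_i \cap L_j \setminus \p(E_2^\perp/E_2),
\]
where $L_k = \p\bigl((E_2 + E_{2n-k-1})/E_2\bigr)$ is a projective subspace of $\p(\bC^{2n}/E_2)\simeq \p^{2n-3}$ of dimension $2n-k-2$, i.e. codimension $k-1$, for $k\in\{2,i,j\}$ (using $E_2 \cap E_{2n-k-1} = 0$).

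The sum of codimensions is $(2-1) + (i-1) + (j-1) = i+j-1 = 2n-3 = \dim \p(\bC^{2n}/E_2)$, so for a sufficiently generic choice of the flags (which is allowed by Kleiman--Bertini, cf. Lemma on the enumerative meaning) the three linear subspaces $L_2, L_i, L_j$ meet transversally in exactly one point of $\p^{2n-3}$. The last step is to verify that this point does not lie on $\p(E_2^\perp/E_2)$: for generic choices the intersection point corresponds to a non-isotropic $W_3$, so indeed $\ell(W_3) \in \Yo$. This yields $\langle \pt,\sigma_2,\sigma_i,\sigma_j \rangle_1 = 1$.

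The only step needing care is the genericity/transversality assertion and the avoidance of $\p(E_2^\perp/E_2)$; both follow from the $\Sp_{2n}$-equivariance of the setup and Kleiman's transversality theorem, since the defining data (one isotropic plane $E_2$ and a collection of subspaces $E_{2n-k-1}$ of minimal $\omega$-rank) can be moved by elements of $\Sp_{2n}$ to put the induced linear subspaces $L_k$ of $\p(\bC^{2n}/E_2)$ in general position.
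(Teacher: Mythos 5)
Your proof is correct and follows the paper's argument essentially step by step: reduce to counting lines in the open orbit via the enumerative lemma, then intersect the three linear subspaces $L_2$, $L_i$, $L_j$ in $\p(\bC^{2n}/E_2)\simeq\p^{2n-3}$, whose codimensions $1$, $i-1$, $j-1$ sum to $2n-3$, so for generic flags they meet in a single reduced point. The only additions you make --- spelling out the dimension count behind the vanishing and explicitly checking that the intersection point avoids $\p(E_2^\perp/E_2)$ --- are sensible points the paper leaves implicit.
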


\begin{proof}

As was mentioned at the beginning of this section, the invariant vanishes unless $i + j = 2n-2$. So we assume $i + j = 2n-2$. In that case, $\langle \pt,\sigma_2,\sigma_i,\sigma_j \rangle_1$ is the number of lines meeting $\{E_2\}$, $Z(E_{2n - 3})$, $Z(E_{2n - i - 1})$ and $Z(E_{2n - j - 1})$, where the subspaces $E_k \subset \bC^{2n}$ are in general position with $\omega\vert_{E_k}$ of minimal rank. 

The locus of such lines is the intersection in $\p(\bC^{2n}/E_2) \setminus \p(E_2^\perp/E_2)$ of three linear spaces
$$\p \big((E_2 + E_{2n - 3})/E_2 \big), \ \ \p \big((E_2 + E_{2n - i - 1})/E_2 \big) \ \ \textrm{ and } \ \  \p \big((E_2 + E_{2n - j - 1})/E_2 \big).$$
Since these linear spaces are in general position, and of respective codimensions $1$, $i-1$ and $j-1$ that add up to the dimension of $\p(\bC^{2n}/E_2)$,  they meet in exactly one point.
\end{proof}

\section{Big quantum cohomology of $\IG(2,2n)$}
\label{Sec.: BQH}

In this section we show the regularity of $\BQH(\IG(2,2n))$ and deduce its generic semisimplicity, which was proved previously in \cite{GMS,Pe}.

\subsection{Deformation}

As before we let $X=\IG(2,2n)$ and consider the deformation $\BQH_\tau(X)$ of the small quantum cohomology $\QH(X)$ inside the big quantum cohomology $\BQH(X)$ in the direction of the Schubert class $\tau = \sigma_2$. Thus, the ring $\BQH_\tau(X)$ is the quotient of $\BQH(X)$ with respect to the ideal generated by those elements of $\big(H^*(X,\bQ)\big)^*$ which vanish on $\sigma_2$. 

Explicitly $\BQH_\tau(X)$ can be described in the following way. For any cohomology classes $a,b \in H^*(X)$ the product is of the form
$$ a \star_\tau b = a \starz b + t \sum_{\sigma} \bigg( \sum_{d \geq 1}  \langle a,b,\sigma,\sigma_2 \rangle_d q^d \bigg) \sigma^\vee + O(t^2),$$
where $a \starz b$ is the small quantum product, $\sigma$ runs over the basis of the cohomology consisting of Schubert classes, $\sigma^\vee$ is the dual basis, and $t$ is the deformation parameter of degree $-1$.

Let us have a closer look at the products of the form $\sigma_i \star_\tau \sigma_j$. According to the dimension axiom for GW invariants $\langle \sigma_i,\sigma_j,\sigma,\sigma_2 \rangle_d $ vanishes unless 
$$
i + j + \deg(\sigma) + 2 =  (2n-1)d + 2(2n-2).
$$ 
Therefore, applying Corollary \ref{Cor.: 4-point invariant}, we have 
\begin{align}\label{Eq.: BQH product for special classes}
\sigma_i \star_\tau \sigma_j = \sigma_i \starz \sigma_j +  \delta_{i+j,2n-2} qt + O(t^2),
\end{align}
for $i+j \leq 2n-2$ .

\subsection{Presentation for $\BQH_\tau(X)$}

Consider the following elements in $\BQH_\tau(X)$
$$\begin{array}{l}
\Delta_r = \det(\sigma_{1 + j - i})_{1 \leq i,j \leq r} \ \ \ \ \ \ \ \textrm{for $r \in [3,2n-2]$} \\
\\
\displaystyle{\Sigma_{2n-2} = \sigma_{n-1} \star_\tau \sigma_{n-1} + 2 \sum_{i = 1}^{n - 1} (-1)^i \sigma_{n - 1 + i} \star_\tau \sigma_{n - 1 - i}} \\
\\
\displaystyle{\Sigma_{2n} = \sigma_n \star_\tau \sigma_n + 2 \sum_{i = 1}^{n - 2} (-1)^i \sigma_{n + i} \star_\tau \sigma_{n - i} + (-1)^{n+1} q \sigma_{1},}\\
\end{array}
$$
where all products are taken in $\BQH_\tau(X)$. Note that these are ``the same" elements as the relations in the  presentation of $\QH(X)$ in Theorem \ref{thm-bkt}. The lower indices indicate the degree of the respective element in $\BQH_\tau(X)$.

\begin{rmk}
The variable $t$ can only appear together with a positive power of $q$ so that $q^dt$ can only occur in elements of degree at least $\deg(q) + \deg(t) = 2n-2$.
\end{rmk}

\begin{lemma}
\label{Lemma-Presentation-BQH}
We have $\Delta_r = O(t^2)$ for all $r \in [3,2n-2]$,  $\Sigma_{2n-2}
= (-1)^n qt + O(t^2)$ and $\Sigma_{2n} = O(t)$.
\end{lemma}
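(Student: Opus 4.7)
The plan is to handle each of the three assertions separately, combining a vanishing-at-$t=0$ observation with a degree argument and an explicit computation of the $t$-linear correction.

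First, setting $t=0$ replaces $\star_\tau$ by $\starz$ throughout, so each of $\Delta_r$, $\Sigma_{2n-2}$, $\Sigma_{2n}$ reduces to a defining relation of $\QH(X)$ from Theorem~\ref{thm-bkt}. Hence each lies in the ideal $(t) \subset \BQH_\tau(X)$, which already proves $\Sigma_{2n} = O(t)$. For the remaining assertions it suffices to analyse the $t$-linear coefficients of $\Delta_r$ and $\Sigma_{2n-2}$.

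Next I would invoke the degree calculus. Since $\deg t = -1$, the $t$-linear coefficient of $\Delta_r$ is homogeneous of degree $r+1$, and by the Remark preceding the Lemma it must be divisible by $q$ (of degree $2n-1$). For $r \in [3, 2n-3]$ one has $r+1 < \deg q$, forcing the $t$-linear coefficient to vanish; hence $\Delta_r = O(t^2)$ in this range. For $r = 2n-2$ and for $\Sigma_{2n-2}$ the $t$-linear coefficients sit in degree $2n-1$, and so take the form $c_\Delta\, q$ and $c_\Sigma\, q$ respectively, with scalars $c_\Delta, c_\Sigma \in K$.

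The scalar $c_\Sigma$ can be read off directly from \eqref{Eq.: BQH product for special classes}. Every product $\sigma_i \star_\tau \sigma_j$ appearing in $\Sigma_{2n-2}$ has $i+j = 2n-2$; when $i, j \geq 1$ it contributes $qt$, whereas the single boundary term $\sigma_{2n-2}\star_\tau \sigma_0 = \sigma_{2n-2}$ (corresponding to $i = n-1$ in the sum) is undeformed because multiplication by the unit $\sigma_0 = 1$ does not depend on $t$. Collecting signs yields
\begin{align*}
c_\Sigma \;=\; 1 + 2\sum_{i=1}^{n-2}(-1)^i \;=\; (-1)^n,
\end{align*}
as required.

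The most delicate task is to show $c_\Delta = 0$. The $t$-linear coefficient of each monomial $\sigma_{1+\pi(1)-1}\star_\tau\cdots\star_\tau\sigma_{1+\pi(r)-r}$ arising from the determinant is given by the iterated Leibniz rule
\begin{align*}
\partial_t\big|_0 (a_1 \star_\tau \cdots \star_\tau a_r) \;=\; \sum_{j=1}^{r-1}(a_1\starz\cdots\starz a_{j-1})\starz B(a_j,\, a_{j+1}\starz\cdots\starz a_r),
\end{align*}
where $B(a,b)$ denotes the $t$-linear part of $a\star_\tau b$, computable as a sum of four-point Gromov--Witten invariants with one insertion of $\sigma_2$. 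The main obstacle is the combinatorial bookkeeping needed to sum these contributions over all $\pi \in S_{2n-2}$ and all split positions $j$, and to show the result vanishes. I expect the cleanest route to be a generating-function reformulation: realise $\Delta_{2n-2}$ as the coefficient of $z^{2n-2}$ in the $\star_\tau$-inverse of $H(-z) = \sum_i (-z)^i \sigma_i$, then combine the classical vanishings $\Delta_k|_{t=0} = 0$ for $k \in [3, 2n-2]$ with the divisor equation for $\sigma_1$ to exhibit the desired cancellation.
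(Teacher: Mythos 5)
Your treatment of $\Delta_r$ for $r \in [3,2n-3]$, of $\Sigma_{2n}$, and of $\Sigma_{2n-2}$ is correct and agrees in spirit with the paper. In fact for $\Sigma_{2n-2}$ you are slightly more careful than the paper's own (very terse) proof: you explicitly isolate the boundary term $\sigma_{2n-2}\star_\tau\sigma_0 = \sigma_{2n-2}$, which receives no $qt$-correction because $\sigma_0$ is the unit, and this is exactly what produces the sign $(-1)^n$ rather than $(-1)^{n+1}$.

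However, there is a genuine gap at the point you yourself flag as delicate: you never actually prove $c_\Delta = 0$, i.e.\ that $\Delta_{2n-2} = O(t^2)$. You sketch a plan (iterated Leibniz, or a generating-function reformulation plus the divisor axiom) but do not carry it out, and the plan as stated overlooks a concrete obstruction. When you expand the $t$-derivative of the $\star_\tau$-determinant, you inevitably hit $t$-linear corrections of the form $B(a,b)$ where $b$ is a \emph{non-special} Schubert class arising from classical Pieri products, and these are not covered by Corollary~\ref{Cor.: 4-point invariant}, which only computes $\langle \pt,\sigma_2,\sigma_i,\sigma_j\rangle_1$ for special $\sigma_i,\sigma_j$. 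Concretely, the paper develops $\Delta_{2n-2}$ along the first column, uses the already-established vanishing $\Delta_r = O(t^2)$ for $r \le 2n-3$ to kill most terms, and is left with the four explicit products in \eqref{Eq.: formula I in the proof of Lemma O(t^2)}. The crux is the term $\sigma_{2n-4}\star_\tau\sigma_1\star_\tau\sigma_1$: classically $\sigma_{2n-4}\starz\sigma_1 = \sigma_{2n-3} + \sigma_{2n-3}'$ with $\sigma_{2n-3}'$ \emph{not} a special class, so one must evaluate $\langle \sigma_{2n-3}',\sigma_1,\pt,\sigma_2\rangle_1$. The paper reduces this to the three-point invariant $\langle \sigma_{2n-3}',\pt,\sigma_2\rangle_1 = 1$ using the divisor axiom and the results of \cite{ChPe}. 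This extra Gromov--Witten input is what makes the cancellation $c_\Delta = 0$ work, and it is precisely the ingredient your proposal is missing. Without computing (or citing a computation of) this invariant, the assertion $\Delta_{2n-2} = O(t^2)$ — and hence the linchpin of the regularity argument — is unproved.
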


\begin{proof}

(i) Here we prove the statement about $\Delta_r$'s. From Theorem \ref{thm-bkt} and the above remark we know that $\Delta_r = O(t^2)$ for $r \in [3,2n-3]$. Thus, we only need to consider $\Delta_{2n-2}$. Inductively developing the determinant with respect to the first column we have
\begin{align*}
&\Delta_{2n-2} = \sum_{s=1}^{2n-2} (-1)^{s-1} \sigma_s \star_\tau \Delta_{2n-2-s} = O(t^2) - \sigma_{2n-4} \star_\tau \Delta_2 + \sigma_{2n-3} \star_\tau \Delta_1 - \sigma_{2n-2},
\end{align*}
where we used the fact that $\Delta_r = O(t^2)$ for $r \in [3,2n-3]$. Thus, we need to prove that 
\begin{align}\label{Eq.: formula I in the proof of Lemma O(t^2)}
\sigma_{2n-4} \star_\tau \sigma_2  - \sigma_{2n-4} \star_\tau \sigma_1 \star_\tau \sigma_1 + \sigma_{2n-3} \star_\tau \sigma_1  - \sigma_{2n-2}
\end{align}
is of the form $O(t^2)$. We will see this by reducing everything to products of special Schubert classes. First, let us look at the term $\sigma_{2n-4} \star_\tau \sigma_1 \star_\tau \sigma_1$. Since 
$\sigma_{2n-4} \star_0 \sigma_1 =  \sigma_{2n-3} + \sigma_{2n-3}',$
where $\sigma_{2n-3}'$ is a Schubert class of degree $2n-3$ different from $\sigma_{2n-3}$ (see \cite{ChPe}), and for degree reasons, we have
\begin{align}\label{Eq.: formula II in the proof of Lemma O(t^2)}
(\sigma_{2n-4} \star_\tau \sigma_1 ) \star_\tau \sigma_1 = ( \sigma_{2n-3} + \sigma_{2n-3}' ) \star_\tau \sigma_1.
\end{align}
By \eqref{Eq.: BQH product for special classes} we have $\sigma_{2n-3} \star_\tau \sigma_1 =  \sigma_{2n-3} \starz \sigma_1 + qt + O(t^2)$. Thus, we only need to take care of the term  $\sigma_{2n-3}' \star_\tau \sigma_1$. For degree reasons we have
\begin{align*}
&\sigma_{2n-3}' \star_\tau \sigma_1 = \sigma_{2n-3}' \starz \sigma_1 +\langle \sigma_{2n-3}',\sigma_1,\pt ,\sigma_2 \rangle_1 qt + O(t^2).            \end{align*}
Applying the dimension axiom for Gromov--Witten invariants we see that the 4-point invariant $ \langle \sigma_{2n-3}',\sigma_1,\pt ,\sigma_2 \rangle_1$ is equal to the 3-point invariant $\langle \sigma_{2n-3}',\pt ,\sigma_2 \rangle_1$. The latter can be computed using results in \cite{ChPe} and we get $\langle \sigma_{2n-3}',\pt ,\sigma_2 \rangle_1 = 1$. Therefore, we obtain
\begin{align}\label{Eq.: formula III in the proof of Lemma O(t^2)}
&\sigma_{2n-3}' \star_\tau \sigma_1 = \sigma_{2n-3}' \starz \sigma_1 + qt + O(t^2).            
\end{align}
Plugging \eqref{Eq.: formula II in the proof of Lemma O(t^2)}--\eqref{Eq.: formula III in the proof of Lemma O(t^2)} into \eqref{Eq.: formula I in the proof of Lemma O(t^2)}, using \eqref{Eq.: BQH product for special classes} and the fact that $\Delta_{2n-2}$ vanishes modulo $t$, we obtain the required statement.

\medskip

(ii) To prove the statement about $\Sigma_{2n-2}$ we just apply \eqref{Eq.: BQH product for special classes} to each summand of $\Sigma_{2n-2}$, and use and the fact that $\Sigma_{2n-2}$ vanishes modulo $t$.

\medskip

(iii) The statement about $\Sigma_{2n}$ is evident, as it just claims that there might be a non-trivial deformation along $t$.
\end{proof}

By a standard argument (see e.g. \cite[Proposition~10]{FuPa}) we obtain the following.

\begin{cor}\label{Presenation for BQHtau}
The ring $\BQH_\tau(X)$ is the quotient of $(K[[t]])[\sigma_1,\cdots,\sigma_{2n-2}]$ modulo relations of the form
\begin{align}\label{Eq.: Rel 1}
& \Delta_r + O(t^2)  \quad \quad   \text{for all} \quad r \in [3,2n-2] \\ \label{Eq.: Rel 2}
& \Sigma_{2n-2} + (-1)^{n+1} qt + O(t^2) \\ \label{Eq.: Rel 3}
& \Sigma_{2n} + O(t).
\end{align}
\end{cor}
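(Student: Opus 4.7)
The strategy is the standard dimension-count used in presentations of quantum cohomology rings (cf.~\cite[Proposition~10]{FuPa}): produce enough relations in the kernel of the natural surjection onto $\BQH_\tau(X)$, and then match $K[[t]]$-ranks with the target.

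Since the special Schubert classes generate $\BQH_\tau(X)$ as a $K[[t]]$-algebra, there is a surjection $\phi\colon K[[t]][\sigma_1,\dots,\sigma_{2n-2}]\twoheadrightarrow \BQH_\tau(X)$. By Lemma~\ref{Lemma-Presentation-BQH}, the polynomials $\Delta_r$, $\Sigma_{2n-2}+(-1)^{n+1}qt$, and $\Sigma_{2n}$ map in $\BQH_\tau(X)$ to elements of order $t^2$, $t^2$, and $t$ respectively; lifting the corresponding correction terms back to $K[[t]][\sigma_i]$ produces explicit elements $\tilde\Delta_r$, $\tilde\Sigma_{2n-2}$, $\tilde\Sigma_{2n}$ of the required forms \eqref{Eq.: Rel 1}--\eqref{Eq.: Rel 3} lying in $\ker\phi$. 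Let $I$ be the ideal they generate and set $R := K[[t]][\sigma_i]/I$, so that $\phi$ factors through a surjection $\bar\phi\colon R\twoheadrightarrow \BQH_\tau(X)$; the task is to show that $\bar\phi$ is injective.

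Since $\BQH_\tau(X)$ is by construction a free $K[[t]]$-module of rank $N:=2n(n-1)$, it suffices to prove that $R$ is also $K[[t]]$-free of rank $N$. Modulo $t$, Theorem~\ref{thm-bkt} yields $R/tR \cong \QH(X)$, an Artinian $K$-algebra of dimension $N$. The polynomial ring $K[\sigma_1,\dots,\sigma_{2n-2}]$ is Cohen--Macaulay of Krull dimension $2n-2$, and is cut down to dimension $0$ by the $2n-2$ elements $\Delta_r, \Sigma_{2n-2}, \Sigma_{2n}$; hence these form a regular sequence. A standard deformation argument then shows that their lifts $\tilde\Delta_r, \tilde\Sigma_{2n-2}, \tilde\Sigma_{2n}$ form a regular sequence in the Cohen--Macaulay ring $K[[t]][\sigma_i]$ of Krull dimension $2n-1$. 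Consequently $R$ is Cohen--Macaulay of Krull dimension $1$, with Artinian closed fibre of $K$-dimension $N$; miracle flatness applied to $K[[t]] \to R$ then forces $R$ to be $K[[t]]$-flat and finite of rank $N$. A surjection between free $K[[t]]$-modules of the same rank is an isomorphism, and $\bar\phi$ is such a map, which completes the argument.

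The main technical point is the propagation of the regular-sequence property from $K[\sigma_i]$ to $K[[t]][\sigma_i]$: one must verify that cutting the $(2n-1)$-dimensional Cohen--Macaulay ring $K[[t]][\sigma_i]$ by the $2n-2$ deformed relations gives a quotient of Krull dimension exactly $1$. The inequality $\dim R\ge 1$ is automatic from dimension theory, and equality follows because $R/tR$ is already Artinian of the correct $K$-dimension $N$; in the Cohen--Macaulay setting, a partial system of parameters is automatically a regular sequence, and flatness of $R$ over $K[[t]]$ follows. Once this commutative algebra input is in place, the remainder is a bookkeeping argument identical in spirit to Fulton--Pandharipande's treatment of the small quantum cohomology \cite[Proposition~10]{FuPa}.
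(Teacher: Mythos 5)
Your proof is correct and follows the same route as the paper, which disposes of this corollary in one line by citing Fulton--Pandharipande~\cite[Proposition~10]{FuPa} as ``a standard argument''; you have simply unpacked that citation, adapting the usual rank-count/regular-sequence reasoning for small quantum cohomology to the extra deformation parameter $t$ via Cohen--Macaulayness and miracle flatness. One small polish: miracle flatness gives $K[[t]]$-flatness of $R$, but the finiteness of $R$ over $K[[t]]$ (needed to turn the surjection of free $K[[t]]$-modules of rank $N$ into an isomorphism) should be pinned down separately, e.g.\ by Nakayama after first checking that the $\sigma_i$-monomials that span $\QH(X)$ already generate $R$ as a $K[[t]]$-module, or by working locally at the finitely many maximal ideals of $R$ lying over $(t)$.
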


\subsection{Structure of $\BQH_\tau(X)$}

Recall from Section \ref{SubSec.: Structure of QH} that for $\QH(X)$ we have the direct product decomposition 
\begin{align*}
& \QH(X)= A \times B = A \times \prod_{i \in I} B_i
\end{align*}
where $A=K[\varepsilon]/(\varepsilon^{n-1})$ each $B_i$ is the ground
field $K$. By Hensel's lemma this decomposition lifts to $\BQH_\tau(X)$ and we have
\begin{align}\label{Eq.: Direct product decomposition for BQH_tau}
& \BQH_\tau(X) = \mathcal{A} \times  \prod_{i \in I} \mathcal{B}_i
\end{align}
where $\mathcal{A}$ and $\mathcal{B}_i$ are algebras over $K[[t]]$. 

\smallskip

By construction the $K[[t]]$-algebra $\BQH_\tau(X)$ is a free module over $K[[t]]$ of finite rank equal to $\dim H^*(X)$. Therefore, $\cA$ and $\cB_i$'s are also free $K[[t]]$-modules of finite rank. Reducing modulo $t$ one sees that the rank of $\cA$ is $n-1$ and the ranks of $\cB_i$'s are all equal to 1. Similarly we obtain that the natural algebra homomorphism $K[[t]] \to \cB_i$ is an isomorphism of $K$-algebras.

\begin{thm}\label{Prop.: Regularity of BQH}
The ring $\BQH(X)$ is regular.
\end{thm}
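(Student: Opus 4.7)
The plan is to decompose $\BQH(X)$ into local components and verify regularity of the only singular one via the Jacobian criterion. First, I would lift the product decomposition $\QH(X) = A \times \prod_i B_i$ of Proposition~\ref{Prop.: Decomposition of QH} to the big quantum cohomology: since $\BQH(X)$ is finite over the complete local ring $K[[t_0, \dots, t_s]]$, Hensel's lemma lifts the idempotents of $\QH(X)$ to idempotents of $\BQH(X)$, yielding
\[
\BQH(X) = \cA_{\mathrm{big}} \times \prod_i \cB_{i, \mathrm{big}},
\]
with each factor a local $K[[t_0, \dots, t_s]]$-algebra, finite free of the corresponding rank. The factors $\cB_{i, \mathrm{big}}$ have rank $1$, so they are isomorphic to $K[[t_0, \dots, t_s]]$ and trivially regular. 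The task reduces to regularity of $\cA_{\mathrm{big}}$.

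Next, I would further reduce to regularity of the one-parameter partial deformation $\cA$ (the fat factor of $\BQH_\tau(X)$). Granting that $\cA$ is a DVR with uniformizer $a_2$, the identity $t_\tau = u \cdot a_2^{n-1}$ in $\cA$ (with $u$ a unit) lifts to $t_\tau = \tilde u \cdot a_2^{n-1} + \sum_{i \neq \tau} t_i g_i$ in $\cA_{\mathrm{big}}$. Reducing modulo $\mathfrak{m}_{\cA_{\mathrm{big}}}^2$ produces a nontrivial linear dependence among the generators $t_0, \dots, t_s, a_2$ of the maximal ideal, bringing the embedding dimension down to the Krull dimension $s+1$ and yielding regularity of $\cA_{\mathrm{big}}$.

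The heart of the argument is then to verify regularity of $\cA$ by the Jacobian criterion, applied to the presentation of Corollary~\ref{Presenation for BQHtau}: $\cA$ is the completion at the origin of a quotient of $(K[[t]])[\sigma_1, \dots, \sigma_{2n-2}]$ by the listed $2n-2$ relations in $2n-1$ variables. At the origin the linearizations of these relations are: for each $r \in [3, 2n-2]$, the direction $(-1)^{r-1} \sigma_r$ from $\Delta_r$; the combination $2(-1)^{n-1} \sigma_{2n-2} + (-1)^{n+1} q\, t$ from $\Sigma_{2n-2} + (-1)^{n+1} q t$; and $(-1)^{n+1} q\, \sigma_1$ from $\Sigma_{2n}$. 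After eliminating $\sigma_{2n-2}$ using $\Delta_{2n-2}$ these linear forms span a subspace of rank $2n-2$ in the cotangent space, matching the codimension of $\spec \cA$, so $\cA$ is regular.

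The crux, and hence the main obstacle, is the nonvanishing of the $t$-derivative of $\Sigma_{2n-2}$ at the origin, namely the smoothing coefficient $(-1)^{n+1} q \neq 0$. This is precisely the content of Lemma~\ref{Lemma-Presentation-BQH}, which relies in turn on the 4-point Gromov--Witten invariants $\langle \pt, \sigma_2, \sigma_i, \sigma_j \rangle_1 = \delta_{i+j, 2n-2}$ computed in Corollary~\ref{Cor.: 4-point invariant}. Without this contribution the Jacobian would drop rank at the $A_{n-1}$-fat point and regularity would fail.
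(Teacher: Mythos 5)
Your proposal follows essentially the same route as the paper: reduce to the regularity of the fat factor $\cA$ of $\BQH_\tau(X)$, verify it by the Jacobian criterion on the presentation of Corollary~\ref{Presenation for BQHtau}, and identify the $t$-linear term $(-1)^{n+1}qt$ of the $\Sigma_{2n-2}$-relation --- supplied by the four-point invariants of Corollary~\ref{Cor.: 4-point invariant} --- as the input that makes the Jacobian attain full rank $2n-2$.

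The one place where you diverge is the passage from $\BQH_\tau(X)$ to the full ring $\BQH(X)$. The paper dispatches this by noting that the identical Jacobian computation applies over $K[[t_0,\dots,t_s]]$, since only the $t_\tau$-coefficient of the linearized $\Sigma_{2n-2}$-relation is needed and hence ``no additional GW invariants are necessary''. You instead use the surjection $\cA_{\mathrm{big}}\twoheadrightarrow\cA$ with kernel $(t_i:i\neq\tau)$, lift the DVR identity $t_\tau=u\,a_2^{n-1}$ to $t_\tau=\tilde u\,a_2^{n-1}+\sum_{i\neq\tau}t_ig_i$, and reduce modulo $\mathfrak m^2$ to cut the embedding dimension down to the Krull dimension $s+1$. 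This is a valid alternative that makes explicit a step the paper handles by assertion; it costs you an extra appeal to the DVR structure of $\cA$, and it silently uses $n-1\ge 2$ to get $a_2^{n-1}\in\mathfrak m^2$ (harmless, as for $n=2$ the fat factor is trivial and there is nothing to prove). Both variants are correct and rest on the same key lemmas.
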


\begin{proof}

Let us first prove that $\BQH_\tau(X)$ is a regular ring, which amounts to showing that $\cA$ is regular. Consider the presentation of $\BQH_\tau(X)$ given in Corollary \ref{Presenation for BQHtau}. Taking linear terms one computes the Zariski tangent space of $\cA$ at the maximal ideal (it corresponds to the origin in coordinates~$\sigma_1, \dots, \sigma_{2n-2}, t$). We have: \eqref{Eq.: Rel 1} contribute $\sigma_r$ for $r \in [3,2n-2]$, the relation \eqref{Eq.: Rel 2} contributes
$2 \sigma_{2n-2} + qt$, and the relation \eqref{Eq.: Rel 3} contributes $\sigma_1$ (note that the deformation along $t$ has not changed the linear part of this relation). Therefore, the dimension of the tangent space is 1. Since the Krull dimension of $\cA$ is also equal to 1, we obtain regularity.  

The regularity of $\BQH(X)$ follows from an identical argument. The only difference is the number of the deformation parameters. Note that no additional GW invariants are necessary.
\end{proof}

\begin{cor}\label{Thm.: semisimplicity}
$\BQH(X)$ is generically semisimple.
\end{cor}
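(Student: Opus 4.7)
The plan is to deduce generic semisimplicity directly from the regularity of $\BQH(X)$ shown in Theorem~A by a short commutative-algebra argument. The key observation, announced in the introduction under the heading ``generic semisimplicity via generic smoothness'', is that in characteristic zero regularity of the total space forces reducedness of the generic fibre.

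First, I would note that the generic fibre
\[
\BQH(X)_{\eta}=\BQH(X)\otimes_{K[[t_0,\dots,t_s]]}k
\]
is nothing but a localisation of $\BQH(X)$. Indeed, writing $S=K[[t_0,\dots,t_s]]\setminus\{0\}$, we have $k=S^{-1}K[[t_0,\dots,t_s]]$, and consequently $\BQH(X)_{\eta}=S^{-1}\BQH(X)$. Since localisations of regular rings are regular, Theorem~A gives that $\BQH(X)_{\eta}$ is a regular Noetherian ring.

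Next, as recalled in Section~\ref{SubSec.: Def. Picture}, $\BQH(X)$ is a free module of finite rank $N=\dim_{\bQ}H^*(X,\bQ)$ over $K[[t_0,\dots,t_s]]$, so $\BQH(X)_{\eta}$ is an $N$-dimensional $k$-algebra, hence Artinian of Krull dimension zero. A zero-dimensional Noetherian regular ring is a finite product of fields: for every maximal ideal $\mathfrak{q}$ the localisation $(\BQH(X)_{\eta})_{\mathfrak{q}}$ is a regular local ring of dimension zero, so its maximal ideal vanishes and it coincides with its residue field $\kappa(\mathfrak{q})$; the structure theorem for Artinian rings then identifies $\BQH(X)_{\eta}$ with $\prod_{\mathfrak{q}}\kappa(\mathfrak{q})$. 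By the characterisation of semisimplicity in Section~\ref{SubSec.: Semisimplicity}, this is exactly what it means for $\BQH(X)$ to be generically semisimple.

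No serious obstacle arises in this step; the whole content of the corollary is already packed into Theorem~A, and the argument above is merely the bookkeeping that translates ``regular $+$ zero-dimensional fibre'' into ``semisimple fibre''. The only subtlety worth emphasising is that passage to the generic point must be realised as a localisation (rather than an arbitrary base change) in order to preserve regularity, and this is guaranteed here by the fact that $\eta$ is the generic point of the integral base $\spec K[[t_0,\dots,t_s]]$.
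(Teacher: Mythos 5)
Your proof is correct and follows essentially the same route as the paper: realize the generic fibre as a localization of $\BQH(X)$, note that localization preserves regularity, and conclude that a zero-dimensional regular ring is a product of fields. The paper's version is more terse but identical in substance; just beware that the symbol $k$ is already reserved in Section~\ref{Sec.: Conventions and Notation} for the fraction field of $\QQ[[q]]$, so using it for $K((t_0,\dots,t_s))$ clashes with the established notation.
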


\begin{proof}
This is a simple corollary of the above Theorem \ref{Prop.: Regularity of BQH}. Let us give a short proof for completeness. The coordinate ring of the generic fiber 
\begin{align*}
\BQH(X)_{\eta} = \BQH(X) \otimes_{K[[t_0, \dots, t_s]]} K((t_0, \dots, t_s))
\end{align*}
is a localization of $\BQH(X)$. Therefore, it is also a regular ring, since $\BQH(X)$ was regular. This implies that $\BQH(X)_{\eta}$ is a product of finite field extensions of $K((t_0, \dots, t_s))$ which was our definition of semisimplicity. 
\end{proof}

\section{Relation to the unfolding of $A_{n-1}$-singularity}
\label{Sec.: Singularity}

In this section we establish a relation between the quantum cohomology of $\IG(2,2n)$ and the unfolding of an $A_{n-1}$-singularity. The comparison takes place at the level of $F$-manifolds. The concept of an $F$-manifold was introduced in \cite{HeMa} as a weakening of that of a Frobenius manifold. Every Frobenius manifold gives rise to an $F$-manifold after forgetting a part of the structure. Both $F$-manifolds and Frobenius manifolds arise naturally in quantum cohomology and singularity theory. We begin by briefly recalling relevant concepts and refer to \cite{He} for details.

\subsection{$F$-manifolds: generalities}
\label{SubSec.: F-mfds generalities}

Let $M$ be a complex manifold. Endow $\cT_M$ (the holomorphic tangent sheaf) with an $\cO_M$-bilinear, commutative, associative, and unital product $\circ \colon \cT_M \times \cT_M \to \cT_M$, and denote $e$ the global unit vector field. The triple $(M, \, \circ \, ,\, e )$ is called an \textit{$F$-manifold}, if the identity
\begin{align}\label{Eq.: Integrability condition}
\Lie_{X \circ Y} (\circ) = X \circ \Lie_Y(\circ) + Y \circ \Lie_X(\circ)
\end{align}
is satisfied for any local vector fields $X$ and $Y$.

Given two $F$-manifolds $(M_1, \, \circ_1 \, ,\, e_1)$ and $(M_2, \, \circ_2 \, ,\, e_2)$ one can endow the product $M_1 \times M_2$ with a natural structure of an $F$-manifold (see Proposition 2.10 of \cite{He}). The resulting $F$-manifold is denoted $(M_1 \times M_2, \circ_1 \boxplus \circ_2 , e_1 \boxplus e_2)$.

If $(M, \, \circ \, ,\, e )$ is an $F$-manifold and $P \in M$ a point, then by $(M, P,  \, \circ \, ,\, e )$ we will denote the germ of $(M, \, \circ \, ,\, e )$ at $P$. A germ $(M, P,  \, \circ \, ,\, e )$ is called \textit{irreducible} if it cannot be written as a product of germs of $F$-manifolds. 

Let $(M, P,  \, \circ \, ,\, e )$ be a germ of an $F$-manifold. The tangent space $T_PM$ is a finite dimensional commutative $\bC$-algebra, and as such it uniquely decomposes into the product of local algebras $T_PM = B_1 \times \dots \times B_r$. Moreover, this decomposition extends to an analytic neighbourhood of $P$, i.e. we have the decomposition 
\begin{align*}
(M, P,  \, \circ \, ,\, e )= \prod_{i=1}^r (M_i, P_i,  \, \circ_i \, ,\, e_i ) 
\end{align*}
of germs of irreducible $F$-manifolds (see Theorem 2.11 of \cite{He}).

\subsection{$F$-manifold of $\BQH(X)$}
\label{SubSec.: F-mfd BQH}

In Section \ref{Sec.: Conventions and Notation} we have defined big quantum cohomology ring $\BQH(X)$ as an algebra over $K[[t_0, \dots , t_s]]$, where $K$ is the algebraic closure of $\bQ((q))$. Another way of packaging the ingredients appearing in $\BQH(X)$ is in the language of formal Frobenius manifold and formal F-manifolds (see \cite{Ma}). 

For reasons that will become clear later, the setting of formal $F$-manifolds is not suitable for our needs. Therefore, we will describe how to pass to the analytic setting by making some convergence assumption. As was already mentioned in Remark \ref{SubSubSec.: Remark on difference of notation with Manin's book}, $\BQH(X)$ can be defined as an algebra over $R[[t_0, \dots , t_s]]$, where $R=\bQ[[q]]$. Moreover, since $X$ is a Fano variety, one can further replace $R$ with $\bQ[q]$. Further, we can specialize to $q=1$ and replace $\bQ$ with $\bC$. This way we get $\BQH(X)$ defined over $\bC[[t_0, \dots , t_s]]$ which we will denote $\BQH(X)_{\bC}$.

Another way to formulate the above procedure is to say that for a Fano variety $X$, if we put $q=1$, the Gromov--Witten Potential \eqref{Eq.: GW potential} gives an element of the power series ring $\bC[[t_0, \dots , t_n]]$. Let us denote this power series by $\Phi_{q=1}$.

A priori it is not clear that $\Phi_{q=1}$ has a non-trivial domain of convergence. This is one of the standard expectations in Gromov--Witten theory, but by no means an established result. Therefore, we will make the following assumption.

\smallskip

\noindent \textbf{Convergence assumption:} the power series $\Phi_{q=1}$ converges in some open neighbourhood $M$ of the origin. 

\smallskip

Repeating the definition of quantum multiplication we now obtain an analytic $F$-manifold (even Frobenius) structure on $M$. Namely, basis elements $\Delta_i$ appearing in \eqref{Eq.: Quantum cohomology} form a basis of sections of the holomorphic tangent sheaf of $M$. Thus, Formula \eqref{Eq.: Quantum cohomology} endows $\cT_M$ with a multiplication, and one can further check that \eqref{Eq.: Integrability condition} is satisfied.

\subsection{$F$-manifolds for isolated hypersurfaces singularities}

An {\it isolated hypersurface singularity} is a holomorphic function germ $f \colon (\bC^m,0) \to (\bC,0)$ with an isolated singularity at the origin, i.e. the origin is an isolated critical point of $f$. One defines the {\it Milnor algebra} of $f$ as $\cO_{\bC^m,0}/(\frac{\partial f}{\partial x_1} , \dots , \frac{\partial f}{\partial x_m})$, which is the algebra of functions on the critical locus of $f$. 

An {\it unfolding} of an isolated hypersurface singularity $f \colon (\bC^m,0) \to (\bC,0)$ is a holomorphic function germ $F \colon (\bC^m,0) \times (\bC^n,0)   \to (\bC,0)$ satisfying $F_{|\bC^m \times \{0\}} = f$. The germ $(\bC^n,0)$ is called the {\it base} of the unfolding and is often denoted $(M,0)$. The {\it relative critical locus} $C \subset (\bC^m,0) \times (M,0)$ is the closed subspace defined by the ideal $J_F= (\frac{\partial F}{\partial x_1} , \dots , \frac{\partial F}{\partial x_m})$. Note that $C$ is not necessarily reduced. This setup can be conveniently described by the diagram
\begin{align}\label{Eq.: Unfolding Diagram}
\xymatrix{
C \ar[r]^<<<<i \ar[rd]_p &(\bC^m \times M,0)  \ar[d]^{\pi} \ar[r]^<<<<<F & (\bC,0)  \\
&(M,0)
}
\end{align}
Define a morphism of sheaves 
\begin{align}\label{Eq.: KS}
\cT_{(M,0)} \to p_*\cO_C
\end{align} 
by sending a local vector field $X$ to $\widetilde{X}(F)_{|C}$, where $\widetilde{X}$ ist a lift of $X$ to $(\bC^m \times M,0)$. One says that unfolding $F$ is {\it semiuniversal} if \eqref{Eq.: KS} is an isomorphism (cf. \cite[Theorem 5.1]{He}). In this case one can transport the product structure from $p_*\cO_C$ to $\cT_M$. Moreover, this multiplication endows $(M,0)$ with an $F$-manifold structure (cf. \cite[Theorem 5.3]{He}).

\subsubsection{Example: $A_n$-singularity}

Let us illustrate the above construction in the simplest case of $f=x^{n+1}$, which is also the relevant case for our considerations. In this case a semiuniversal unfolding is given by
\begin{align*}
& F= x^{n+1} + a_{n-1} x^{n-1} + \dots + a_1x+a_0,
\end{align*}
where $a_0, \dots , a_{n-1}$ are coordinates on the base space of the unfolding $M=\bC^n$. The fact that $f$ is a function of one variable is a special feature of this example. Among isolated hypersurface singularities it is the only one that can be realised in this way.

\subsection{Spectral cover of an $F$-manifold}

Let $(M, \, \circ \, ,\, e )$ be an $F$-manifold. The \textit{spectral cover} of $(M, \, \circ \, ,\, e )$ is defined as the relative analytic spectrum $\Specan(\cT_M, \circ)$. By construction there exists a canonical morphism $\Specan(\cT_M, \circ) \to M$. In general $\Specan(\cT_M, \circ)$ is only an analytic space, i.e. it could be singular and/or non-reduced. The canonical morphism $\Specan(\cT_M, \circ) \to M$ is 
finite, and in the reduced case can be seen as a ramified covering of~$M$.

In the quantum cohomology setting the spectral cover is given by the spectrum of the big quantum cohomology ring. In the singularity theory setting the spectral cover is nothing else but the relative critical locus $C$ that appears in \eqref{Eq.: Unfolding Diagram}.

An $F$-manifold $(M, \, \circ \, ,\, e )$ is called generically semisimple if there is an open subset $U \subset M$, so that for every point in $P \in U$ the algebra structure on $T_PM$ is semisimple.

One can nicely characterise generic semisimplicity of an $F$-manifold in terms of its spectral cover. Namely, we have the following
\begin{thm}[\cite{He}, Theorem 3.2]
An $F$-manifold $(M, \, \circ \, ,\, e )$ is generically semisimple if and only if its spectral cover is reduced.
\end{thm}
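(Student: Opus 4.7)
The plan is to translate both conditions into properties of the finite flat morphism $\pi \colon Z = \Specan(\mathcal{T}_M, \circ) \to M$ and play them off against each other. Since $M$ is a complex manifold, $\mathcal{T}_M$ is locally free of rank $n = \dim M$, so $\pi$ is finite of constant degree $n$ and flat. Its scheme-theoretic fiber over $P \in M$ is $\Spec(T_PM)$, where $T_PM$ is a finite-dimensional commutative $\mathbb{C}$-algebra; for such algebras, being semisimple, being reduced, and being étale over $\mathbb{C}$ are all equivalent. The theorem therefore reduces to the assertion that $\pi^{-1}(P)$ is reduced for $P$ in a dense open subset of $M$ if and only if the total space $Z$ is reduced.

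For the direction ``generic semisimplicity implies $Z$ reduced'', I would argue with nilpotents. A local section of the nilradical of $\mathcal{O}_Z$ corresponds, via $\pi_* \mathcal{O}_Z \simeq \mathcal{T}_M$, to a local section $\sigma$ of $\mathcal{T}_M$ satisfying $\sigma^{\circ k} = 0$ for some $k \geq 1$. At any $P$ where $T_PM$ is semisimple, the image of $\sigma$ is a nilpotent element of a reduced algebra, hence zero. By hypothesis such $P$ form a dense subset, and $\mathcal{T}_M$ is locally free, so a section vanishing on a dense set vanishes identically; thus $\sigma = 0$ and $\mathcal{O}_Z$ is reduced.

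For the converse, I would package the semisimple locus through the discriminant of the trace form. The product $\circ$ yields a symmetric bilinear pairing $\mathcal{T}_M \otimes \mathcal{T}_M \to \mathcal{O}_M$ given by $(a,b) \mapsto \mathrm{tr}(L_{a \circ b})$, whose determinant is a section $d$ of the line bundle $(\det \mathcal{T}_M^*)^{\otimes 2}$. The fiber of $d$ at $P$ is the discriminant of the trace form on $T_PM$, which is nonzero precisely when $T_PM$ is semisimple; hence $(M, \circ, e)$ is generically semisimple if and only if $d \not\equiv 0$. Assuming $Z$ is reduced, I would restrict $\pi$ to each irreducible component $Z_i$ of $Z$: by flatness of $\pi$ each $Z_i$ has dimension $\dim M$, and since $M$ is irreducible, $\pi|_{Z_i}$ is a finite surjective holomorphic map between reduced irreducible complex-analytic spaces of the same dimension. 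The analytic analogue of generic smoothness then produces a dense open subset of $M$ over which each $\pi|_{Z_i}$ is étale; on their common intersection the geometric fibers of $\pi$ are disjoint unions of reduced points, so $d$ is nonzero there, establishing $d \not\equiv 0$.

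The main obstacle is this last converse step, namely invoking generic étaleness for a finite dominant holomorphic map between reduced equidimensional complex-analytic spaces. A more elementary alternative uses flatness directly: since $M$ is reduced and $\mathcal{T}_M$ is flat, the stalks of $\mathcal{T}_M$ are torsion-free over $\mathcal{O}_M$, so any nilpotent section of $\mathcal{O}_Z$ persists after localisation at a generic point of $M$, and conversely reducedness of $Z$ propagates via this injection to reducedness of the generic fibre. Either formulation rests on the standard but nontrivial input that, in characteristic zero, reducedness of the total space of a finite flat family is equivalent to generic reducedness of fibres.
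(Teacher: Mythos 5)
The paper does not itself prove this statement; it is cited directly from Hertling's book (\cite{He}, Theorem~3.2), and the only comment the paper offers is that the ``if'' direction (reduced spectral cover implies generic semisimplicity) is an immediate consequence of generic smoothness in characteristic zero. So there is no proof in the paper against which to compare yours step by step; what can be said is that your converse direction is in the same spirit as that remark, and your forward direction (the nilpotent-section argument) is the standard and correct complement.

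Your argument is essentially sound. The forward direction is clean: a nilpotent of $\pi_*\cO_Z\cong\cT_M$ gives a section $\sigma$ of the tangent bundle whose value is nilpotent in each $T_PM$, and vanishing of $\sigma$ on the (open, hence on a connected $M$ dense) semisimple locus forces $\sigma=0$ by the identity theorem for sections of a vector bundle. For the converse, the generic-\'etaleness-of-components route has a small gap as written: étaleness of each $\pi|_{Z_i}$ over a dense open $U_i\subset M$ does not by itself make the scheme-theoretic fibre of $\pi$ reduced at a point lying on two components; you should also delete the images of the pairwise intersections $Z_i\cap Z_j$ (which are nowhere dense since $Z$ is reduced), after which the fibre over a remaining general $P$ is a genuine disjoint union of reduced points. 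Your ``flatness/torsion-freeness'' alternative sidesteps this entirely and is the cleaner route: local freeness of $\cT_M$ gives $\cT_{M,P}\hookrightarrow\cT_{M,P}\otimes\mathrm{Frac}(\cO_{M,P})$, reducedness of $Z$ then forces the generic fibre to be reduced (a nilpotent $x/s$ would give a nilpotent $x\in\cT_{M,P}$), and in characteristic zero a reduced finite algebra over a field has nondegenerate trace form, so the discriminant $d$ is not identically zero. That is the argument I would keep.
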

Note that, the ``if'' part of the statement is an immediate consequence of the generic smoothness in characteristic zero.

The next theorem can be viewed as a refinement of the above one and gives a beautiful characterisation of (germs of) $F$-manifolds with smooth spectral covers.

\begin{thm}[\cite{He}, Theorem 5.6]\label{Thm: smoothness}
Any germ of an irreducible generically semisimple $F$-manifold with smooth spectral cover is isomorphic to the base space of a semiuniversal unfolding of an isolated hypersurface singularity.   
\end{thm}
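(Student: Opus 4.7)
The plan is to realise the spectral cover $C$ as the relative critical locus of an explicit holomorphic function $F$ on a germ $(\bC^m\times M,0)$ and then verify semiuniversality. By irreducibility, $T_PM$ is a local $\bC$-algebra, so there is a unique point $\wt P\in C$ above $P$; let $m=\dim_{\bC}\mathfrak{m}/\mathfrak{m}^2$ be the embedding dimension of the closed point of $\Spec T_PM$. I would choose $x_1,\dots,x_m\in\cO_{C,\wt P}$ whose residues generate the maximal ideal of $T_PM$; together with local coordinates $t_1,\dots,t_n$ on $(M,P)$ pulled back to $C$, they identify the germ $(C,\wt P)$, via Nakayama in the analytic category, with a smooth subvariety of codimension $m$ in $(\bC^m\times M,0)$. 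Hence $C$ is a local complete intersection, cut out by $m$ functions $g_1,\dots,g_m\in\cO_{\bC^m\times M,0}$.

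The heart of the proof is to show that the generators of the ideal of $C$ can be chosen to be the partial derivatives $(\partial F/\partial x_1,\dots,\partial F/\partial x_m)$ of a single function $F$. Equivalently, one produces a $1$-form $\eta=\sum h_i\,dx_i$ on $(\bC^m\times M,0)$, cutting out the same ideal as the $g_i$, whose restriction to the fibres of $(\bC^m\times M,0)\to(M,0)$ is closed; by the relative Poincar\'e lemma one then sets $F=\int\eta$. The required closedness is exactly the content of the $F$-manifold integrability condition \eqref{Eq.: Integrability condition}: after transferring the product $\circ$ to $p_*\cO_C$ via the tautological isomorphism $\cT_M\xrightarrow{\sim}p_*\cO_C$, the Lie-derivative identity $\Lie_{X\circ Y}(\circ)=X\circ\Lie_Y(\circ)+Y\circ\Lie_X(\circ)$ translates into the symmetry $\partial h_i/\partial x_j=\partial h_j/\partial x_i$ modulo the ideal of $C$.

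Once $F$ is constructed, I would verify the two remaining properties. First, the restriction $f=F|_{\bC^m\times\{0\}}$ has critical locus defined by $g_1|_{t=0},\dots,g_m|_{t=0}$ and Milnor algebra isomorphic to $T_PM$, hence is finite-dimensional; so $f$ is an isolated hypersurface singularity. Second, the Kodaira--Spencer map $\cT_M\to p_*\cO_C$, $X\mapsto\wt X(F)|_C$ of \eqref{Eq.: KS}, must be shown to coincide (up to the tautological identification) with the multiplication-action map defining the spectral cover; by construction both maps are $\cO_M$-linear, and one checks agreement on the coordinate vector fields $\partial/\partial t_j$ using the explicit form $F=\int\eta$. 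Semiuniversality of $F$ then follows from the fact that the Kodaira--Spencer map is an isomorphism of free $\cO_M$-modules of rank $n=\dim M$.

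The main obstacle is the middle step: upgrading the collection of defining equations to partial derivatives of a single function. It is at this point that both the $F$-manifold axiom and the smoothness of $C$ enter essentially, the former providing the symmetry required by Poincar\'e, the latter ensuring that the ideal of $C$ is generated by exactly $m$ functions and that the relative Poincar\'e lemma applies cleanly along the fibres of $\bC^m\times M\to M$. In the monogenic case $m=1$, which is what is needed for the application to $\IG(2,2n)$ where the non-semisimple factor of $\QH$ is $\bC[\varepsilon]/(\varepsilon^{n-1})$, the argument collapses to a one-variable integration and the theorem becomes essentially transparent.
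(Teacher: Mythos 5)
The paper offers no proof of this statement: it is cited verbatim from \cite[Theorem~5.6]{He}, so the comparison has to be with Hertling's own argument rather than anything in the body of the paper.

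Your overall strategy --- embed $(C,\widetilde P)$ in $(\bC^m\times M,0)$, present $I_C$ as the Jacobian ideal $(\partial F/\partial x_1,\dots,\partial F/\partial x_m)$ of a single $F$ obtained by integrating a fibrewise-closed $1$-form, then verify semiuniversality via the Kodaira--Spencer map --- is the right one in spirit, but the central step has a genuine gap. You derive the symmetry $\partial h_i/\partial x_j=\partial h_j/\partial x_i$ only \emph{modulo} $I_C$, whereas the relative Poincar\'e lemma requires $\eta=\sum h_i\,dx_i$ to be fibrewise closed on a whole neighbourhood in $\bC^m\times M$, not merely along $C$. Nothing in the sketch says where the $h_i$ come from, nor why generators of $I_C$ can be chosen (or corrected within $I_C$) so that the symmetry holds identically; without that the integration step does not go through. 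The clean way around, and I believe essentially Hertling's, is to work on $C$ itself first: the integrability axiom \eqref{Eq.: Integrability condition} is equivalent to the statement that the anchor (eigenvalue) map $\mathbf a\colon C\to T^*M$ is a closed embedding onto a \emph{Lagrangian} germ $L$. Since $L$ is smooth and finite over $M$, the Maslov--H\"ormander theory of generating families for Lagrangian germs produces the desired $F$ on $\bC^m\times M$ with relative critical locus $C$, and in that construction the Kodaira--Spencer map of $F$ is, by design, the composition $\cT_M\to \cO_{T^*M}/I_L \cong p_*\cO_C$ sending $\partial/\partial t_j$ to the fibre coordinate $p_j$ --- exactly the tautological multiplication map with which you only \emph{assert} agreement. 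Passing through the Lagrangian picture thus makes precise both of the steps your sketch hand-waves. Your closing observation, that in the corank-$1$ case $m=1$ (all that the $\IG(2,2n)$ application needs) everything reduces to a transparent one-variable integration, is correct.
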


\subsection{$F$-manifold of $\BQH(\IG(2,2n))$}
\label{SubSec.: F-mfd IG(2,2n)}

Let $(M, O,  \circ, e) $ be the germ at the origin of the $F$-manifold for the big quantum cohomology of $\IG(2,2n)$ as defined in Section \ref{SubSec.: F-mfd BQH}. The decomposition of $\QH(\IG(2,2n))$ described in Section \ref{SubSec.: Structure of QH} gives rise to the decomposition of the algebra $(T_OM, \circ_O) = \QH(\IG(2,2n))_{\bC}$ into the product of local algebras. Thus, according to Section \ref{SubSec.: F-mfds generalities}, we obtain the product decomposition
\begin{align*}
(M, O,  \circ, e) = (M_0, P_0, \circ_0, e_0) \times \prod_{i=1}^{(2n-1)(n-1)} (M_i , P_i,  \circ_i, e_i), 
\end{align*}
where $(M_i , P_i,  \circ_i, e_i)$ are one-dimensional $F$-manifolds and  $(M_0 , P_0,  \circ_0, e_0)$ is an irreducible $(n-1)$-dimensional $F$-manifold.

Computing the rank of the Jacobian matrix, similar to the proof of Theorem \ref{Prop.: Regularity of BQH}, implies that the spectral cover of $(M, O,  \circ, e)$ and, hence, of $(M_i , P_i,  \circ_i, e_i)$ is smooth. Thus, by Theorem \ref{Thm: smoothness} each germ $(M_i , P_i,  \circ_i, e_i)$ is isomorphic to the base space of a semiuniversal unfolding of an isolated hypersurface singularity. Moreover, we can identify the type of this singularity.

\begin{thm}\label{Thm: determination of singularity}

If the convergence assumption of Section \ref{SubSec.: F-mfd BQH} is satisfied for $\IG(2,2n)$, then the following statements hold.

\smallskip

(i) The $F$-manifold $(M_0, P_0, \circ_0, e_0)$ is isomorphic to the base space of a semiuniversal unfolding of an isolated hypersurface singularity of type $A_{n-1}$. 

\smallskip

(ii) For $i \geq 1$ the $F$-manifolds $(M_i , P_i,  \circ_i, e_i)$  are isomorphic to the base space of a semiuniversal unfolding of an isolated hypersurface singularity of type $A_1$.
\end{thm}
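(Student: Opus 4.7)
The plan is to combine Theorem~\ref{Thm: smoothness} with the explicit description of the local factors of $\QH(\IG(2,2n))$ obtained in Section~\ref{SubSec.: Structure of QH}. By Theorem~\ref{Prop.: Regularity of BQH}, $\BQH(\IG(2,2n))$ is regular, so in the analytic setup of Section~\ref{SubSec.: F-mfd BQH} the spectral cover of $(M,O,\circ,e)$ is smooth. Smoothness of the spectral cover is preserved under taking irreducible factors, so each $(M_i, P_i, \circ_i, e_i)$ has smooth spectral cover as well. Hertling's Theorem~\ref{Thm: smoothness} then realises each such germ as the base of a semiuniversal unfolding of some isolated hypersurface singularity $f_i$; only the identification of $f_i$ remains.

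To identify the $f_i$'s I would appeal to the Kodaira--Spencer isomorphism~\eqref{Eq.: KS}: for any semiuniversal unfolding, the tangent algebra at the distinguished point of the base is canonically isomorphic, via the multiplicative structure coming from the unfolding, to the Milnor algebra of the germ being unfolded. Under this identification, the Milnor algebras of $f_0$ and of $f_i$ for $i \geq 1$ must coincide with the local factors of $\QH(\IG(2,2n))$ computed in Proposition~\ref{Prop.: Decomposition of QH}, namely $\bC[\varepsilon]/(\varepsilon^{n-1})$ and $\bC$ respectively. These are precisely the Milnor algebras of the germs $x^n$ (type $A_{n-1}$) and $x^2$ (type $A_1$).

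To finish the case $i \geq 1$ I would invoke the Morse lemma: any germ with Milnor number one is stably right-equivalent to a nondegenerate quadratic form, hence of type $A_1$. For $f_0$ I would proceed in two steps. First, the splitting lemma (Morse lemma with parameters) reduces the problem to a germ in a single variable whose Milnor algebra is $\bC[\varepsilon]/(\varepsilon^{n-1})$; this reduction is available because the Milnor algebra is generated by a single element, so the corank of $f_0$ at the origin is one. Second, Tougeron's finite determinacy theorem shows that any univariate germ of Milnor number $n-1$ is right-equivalent to $x^n$, yielding the type $A_{n-1}$. Alternatively one can cite Arnold's ADE classification together with the fact that the Milnor algebra distinguishes the simple singularities.

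The step I expect to be the main obstacle is the bookkeeping in the second paragraph: one must check that the germ produced by Hertling's theorem truly has its tangent algebra at the distinguished point matching, via the Kodaira--Spencer map, the local factor of $\QH(\IG(2,2n))$ identified in Proposition~\ref{Prop.: Decomposition of QH}. This compatibility is built into the formulation of Theorem~\ref{Thm: smoothness}, but deserves to be spelled out carefully in the analytic category, also taking into account the passage from the formal $K$-algebra $\BQH(\IG(2,2n))$ to its analytic avatar described in Section~\ref{SubSec.: F-mfd BQH}. Once this is in place, extracting the singularity type from the Milnor algebra is a routine application of classical singularity theory.
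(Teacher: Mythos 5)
Your proposal is correct and follows essentially the same route as the paper: invoke Hertling's Theorem~\ref{Thm: smoothness}, identify the Milnor algebra of the unfolded germ with the local factor $T_{P_0}M_0$ of $\QH(\IG(2,2n))$ from Proposition~\ref{Prop.: Decomposition of QH}, and read off the singularity type from the corank (one) and Milnor number ($n-1$). The paper simply compresses your splitting-lemma-plus-finite-determinacy step into a citation of Arnold--Gusein-Zade--Varchenko for the fact that corank~$1$ and Milnor number~$n-1$ forces type~$A_{n-1}$.
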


\begin{proof}
We only prove the first assertion. The second one is proved in a similar way. 

The statement follows from the following observation. Proposition \ref{Prop.: Decomposition of QH} implies that the tangent space to $\Spec T_{P_0}M_0$ at its unique point is of dimension one. Now, assuming that $(M_0, P_0, \circ_0, e_0)$ is given by an unfolding of $f$, we get an identification between the Milnor algebra of $f$ and $T_{P_0}M_0$. In singularity theory the dimension of the tangent space to the spectrum of the Milnor algebra is known as {\it corank of $f$ at the critical point}. Now, the statement follows from the fact that the only singularity of corank 1 and Milnor number $n-1$ is the $A_{n-1}$-singularity (see Section 11.1 of \cite{AGV1}).
\end{proof}

\begin{rmk}
Note that the convergence assumption is classically expected (for example, see~\cite{CoIr} and references therein). Further, it may be possible that Theorem~\ref{Thm: smoothness} holds in the formal setting~\cite{HeMail}. 
\end{rmk}

\section{LG model: expectations}
\label{Sec.: LG model}

In this paragraph we will see how our previous results fit into the general picture of homological mirror symmetry for Fano varieties. Note that we are not going to prove a mirror symmetry type statement, but rather speculate about properties of a Landau--Ginzburg mirror for $\IG(2,2n)$.

\subsection{Mirror symmetry for Fano varieties}

Let $X$ be a smooth projective Fano variety and $(Y,w)$ a Landau--Ginzburg model, i.e. $Y$ is a smooth variety and $w$ a regular function on it. To be able to formulate mirror-symmetry-type statements we also endow $X$ and $Y$ with symplectic forms compatible with respective complex structures but our notation will not reflect the symplectic forms explicitly. For simplicity we assume that $w$ has only isolated, but possibly degenerate, critical points.

To $X$ one attaches two triangulated categories --- the category of D-branes of type A and the category of D-branes of type B. The category of D-branes of type A is the derived Fukaya category $D \Fuk(X)$, which encodes the symplectic geometry of $X$ and is independent of the complex structure. The category of D-branes of type B is  $D^b \Coh (X)$  --- the bounded derived category of coherent sheaves on $X$, which encodes the complex geometry of $X$ and is independent of the symplectic structure.

Further one defines categories of D-branes of type A and B for the pair $(Y,w)$. The category of D-branes of type A is the derived Fukaya--Seidel category $D \FS(Y, w)$ and the category of D-branes of type B is the triangulated category of matrix factorisations~$\MF(Y ,w)$. As before the category of D-branes of type A is independent of the complex structure and the category of D-branes of type B of the symplectic one.

One says that $X$ and $(Y, w)$ are mirror to each other if there exist triangulated equivalences of categories
\begin{align}\label{Eq.: Mirror equivalence I}
& D  \Fuk(X) \simeq \MF(Y , w) \\ \label{Eq.: Mirror equivalence II}
& D^b \Coh (X) \simeq D \FS(Y, w).
\end{align}

As it is not essential for our purposes, we refrain from recalling precise definitions of the above categories and instead refer to \cite{Or} and references therein. 

\subsection{Quantum cohomology and mirror symmetry}

Up to now we viewed the quantum cohomology of $X$ as a purely algebro-geometric object. Historically, Gromov--Witten invariants and quantum cohomology were first defined in symplectic geometry \cite{RuTi}, and in mirror symmetry they appear on the symplectic side. Namely, conjecturally there exists an isomorphism of algebras
\begin{align*}
\QH(X) \to HH^*(D \Fuk(X)),
\end{align*}
known as closed-open map (e.g.~see \cite{Ga}), from the small quantum cohomology to the Hochschild cohomology of $D \Fuk(X)$.

Let us assume that $X$ and $(Y,w)$ are mirror to each other. Then \eqref{Eq.: Mirror equivalence I} gives rise to the isomorphism
\begin{align*}
HH^*(D \Fuk(X)) \simeq HH^*(\MF(Y,w)).
\end{align*}
According to \cite[Theorem 3.1]{LinPo}, the Hochschild cohomology $HH^*(\MF(Y,w))$ is isomorphic to the Milnor algebra of $(Y,w)$. Therefore, reduced points of $\Spec \QH(X)$ correspond to non-degenerate critical points of $w$, whereas fat points correspond to degenerate ones.

\subsection{LG model for $\IG(2,2n)$}
\label{SubSec.: SOD}

Let us assume that $\IG(2,2n)$ has a mirror LG model $(Y,w)$ with only isolated critical points. Then the above discussion combined with Proposition~\ref{Prop.: Decomposition of QH} imply that the critical locus of $w$ has only one degenerate critical point. Moreover, Theorem~\ref{Thm: determination of singularity} predicts that the degenerate critical point is of type $A_{n-1}$.

The above should imply (see \cite{Or,Sei}) that $D \FS(Y, w)$ has a semiorthogonal decomposition of the form 
\begin{align}\label{Eq.: FS decomposition}
D \FS(Y, w) = \langle \cC_{n-1}, E_1 , \dots , E_{(2n-1)(n-1)}  \rangle
\end{align} 
where $E_i$ are exceptional objects given by vanishing thimbles attached to non-degenerate critical points and $\cC_{n-1}$ is the subcategory generated by vanishing thimbles attached to the degenerate one.

Further, it should also follow that subcategory $\cC_{n-1}$ is the Fukaya--Seidel category of an isolated hypersurface singularity of type $A_{n-1}$. The latter was studied by P.~Seidel in \cite{Sei} and shown to be equivalent to the bounded derived category of representations of the quiver of type $A_{n-1}$. Note that $\cC_{n-1}$ also has a full exceptional collection. 

The above discussion combined with equivalence \eqref{Eq.: Mirror equivalence II} suggest that $D^b \Coh (\IG(2,2n))$ should have a semiorthogonal decomposition (in fact, a full exceptional collection) 
\begin{align*}
D^b \Coh (\IG(2,2n)) = \langle \cA_{n-1}, E_1 , \dots , E_{(2n-1)(n-1)}  \rangle,
\end{align*} 
where subcategory $\cA_{n-1}$ is equivalent to the bounded derived category of representations of the quiver of type $A_{n-1}$. Such a decomposition is constructed explicitly in Theorem \ref{theorem:residual-igr} of the Appendix by Alexander Kuznetsov.

\newpage

\section{Appendix by Alexander Kuznetsov}
\label{Sec.: Appendix}

\subsection{Lefschetz exceptional collections}

Recall the notion of a Lefschetz semiorthogonal decomposition (\cite[Def.~4.1]{Ku07} and \cite[Def.~3.1]{Ku14}).
In this paper we will need its special case --- a Lefschetz exceptional collection (\cite[Def.~2.1]{Ku}). 
We will remind this notion for the readers convenience.

\begin{defi}
A {\sf Lefschetz exceptional collection} in $D^b(X)$ with respect to the line bundle $\cO_X(1)$ is an exceptional collection in $D^b(X)$ which has a block structure
\begin{equation*}
\left(
\underbrace{E_1 , E_2, \dots, E_{\lambda_0}}_{\text{block 1}}, 
\underbrace{E_1(1) , E_2(1), \dots, E_{\lambda_1}(1)}_{\text{block 2}}, 
\dots,
\underbrace{E_1(m-1) , E_2(m-1), \dots, E_{\lambda_m}(m-1)}_{\text{block $m$}}
\right)
\end{equation*}
where $\lambda = (\lambda_0 \ge \lambda_1 \ge \dots \ge \lambda_{m-1} \ge 0$) is a non-increasing sequence of positive integers (the {\sf support partition} of the Lefschetz collection).
A Lefschetz exceptional collection is {\sf full} if it generates the derived category $D^b(X)$.
It is called {\sf rectangular} if $\lambda_0 = \lambda_1 = \dots = \lambda_m$ (i.e., if the Young diagram of its support partition is a rectangle).
\end{defi}

Clearly, to specify a Lefschetz exceptional collection one needs to give its first block and its support partition.
We will denote by $(E_\bullet,\lambda)$ the corresponding Lefschetz exceptional collection.
Let us list several interesting examples.

\begin{example}
Let $X = \G(2,m)$. Set $k = \lfloor m/2 \rfloor$, $E_i = S^{i-1}\cU^*$ for $1 \le i \le k$, and
\begin{equation*}
\lambda = 
\begin{cases}
(k^{2k+1}),
& \text{if $m = 2k + 1$ is odd,}\\
(k^k,(k-1)^k),
& \text{if $m = 2k$ is even} 
\end{cases}
\end{equation*}
(here the exponents stand for the multiplicities of the entries, thus the first line denotes the partition with $2k+1$ parts equal to $k$, and the second line means the partition with $k$ parts equal $k$ and $k$ parts equal $k-1$).
\end{example}

\begin{example}
Let $X = \IG(2,2k)$. Set $E_i = S^{i-1}\cU^*$ for $1 \le i \le k$, and $\lambda = (k^{k-1},(k-1)^k)$
with the same convention about the exponents as in the previous example.
\end{example}

The collections listed in these examples are full \cite[Theorems 4.1, 5.1]{Ku}.
With the conventions we took they can be rewritten as
\begin{align}
\label{eq:gr2-2kp1}%
D^b(\G(2,2k+1)) &= \langle S^{\bullet-1}\cU^*,(k^{2k+1}) \rangle,\\
\label{eq:gr2-2k}%
D^b(\G(2,2k)) &= \langle S^{\bullet-1}\cU^*,(k^k,(k-1)^k) \rangle,\\
\label{eq:igr2-2k}%
D^b(\IG(2,2k)) &= \langle S^{\bullet-1}\cU^*,(k^{k-1},(k-1)^k) \rangle.
\end{align}
The first of these collections is rectangular, while the last two are not.

It is known that $\G(k,n)$ has a rectangular Lefschetz exceptional collection of length $n$ if and only if $k$ and $n$ are coprime \cite{Fo}.
Lefschetz exceptional collections on general isotropic Grassmannians are not yet known, it is a good question to construct such a collection.

\begin{defi}
Given a Lefschetz exceptional collection $(E_\bullet,\lambda)$ with the length of $\lambda$ equal to $m$,
the subcollection 
\begin{equation*}
(E_\bullet,(\lambda_m)^m) = 
\left(
E_1, \dots, E_{\lambda_m}, 
E_1(1), \dots, E_{\lambda_m}(1), 
\dots,
E_1(m-1), \dots, E_{\lambda_m}(m-1)
\right)
\end{equation*}
is called {\sf the rectangular part} of $(E_\bullet,\lambda)$.
The subcategory orthogonal to the rectangular part of a Lefschetz exceptional collection is called the {\sf residual subcategory}.
It is zero if and only if the original collection is full and rectangular.
\end{defi}

The main result of this section is the following two Theorems describing the residual subcategories of~\eqref{eq:gr2-2k} and~\eqref{eq:igr2-2k}. Note that the residual subcategory of~\eqref{eq:gr2-2kp1} is zero.

\begin{thm}\label{theorem:residual-gr}
The residual subcategory of~\eqref{eq:gr2-2k} is generated by $k$ completely orthogonal exceptional objects.
In particular, it is equivalent to the derived category of the union of $k$ disjoint points.
\end{thm}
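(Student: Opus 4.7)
The strategy is to identify explicit generators of the residual subcategory $\cR$ and then verify complete orthogonality via a combination of Borel--Weil--Bott computations and projection arguments. Since the Lefschetz collection~\eqref{eq:gr2-2k} has support partition $(k^k,(k-1)^k)$ while its rectangular part has partition $((k-1)^{2k})$, one sees at once that $\cR$ is generated by exactly $k$ objects: namely, the projections of the ``top'' entries
$$
S^{k-1}\cU^*,\; S^{k-1}\cU^*(1),\; \dots,\; S^{k-1}\cU^*(k-1)
$$
to the orthogonal of the rectangular subcategory $\cT = \langle \cO, \cO(1), \dots, \cO(2k-1)\rangle$. Let $F_0,\dots,F_{k-1}$ denote these projections; because they come from a subsequence of an exceptional collection, $(F_0,\dots,F_{k-1})$ is automatically an exceptional collection in $\cR$, and it remains only to upgrade this to complete orthogonality, which in view of generation will imply $\cR \simeq D^b(\coprod_{i=1}^{k} \spec\,\C)$.

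The first computational step is to determine $\RHom_X^\bullet(S^{k-1}\cU^*(i), S^{k-1}\cU^*(j))$ for $0\le i,j\le k-1$. Using the rank-two identity $\cU^*\cong \cU\otimes\cO(1)$ together with the $SL_2$-Clebsch--Gordan decomposition
$$
S^{k-1}\cU \otimes S^{k-1}\cU \;\cong\; \bigoplus_{p=0}^{k-1} S^{2p}\cU \otimes (\det\cU)^{k-1-p},
$$
this reduces to evaluating $H^\bullet(\G(2,2k), S^{2p}\cU\otimes\cO(m))$ for suitable $m$, which is a direct Borel--Weil--Bott calculation and yields a complete description of the Hom's between the original objects in $D^b(X)$.

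The second, and core, step is to show that for $i\ne j$ every contribution to $\RHom_X^\bullet(S^{k-1}\cU^*(i), S^{k-1}\cU^*(j))$ is absorbed by $\cT$, so that the projected morphism space in $\cR$ vanishes. The natural way is to resolve each $S^{k-1}\cU^*(j)$ by line bundles $\cO(m)$ with $m\in\{0,1,\dots,2k-1\}$ via a Koszul/BGG-type complex attached to the universal sequence $0\to\cU\to V\to V/\cU\to 0$ (equivalently, to explicitly mutate $S^{k-1}\cU^*(j)$ past the line bundles of $\cT$). Substituting such a resolution into the Hom computation and using $\cT$-semiorthogonality of $\cR$ forces $\RHom^\bullet_\cR(F_i, F_j)=0$ for $i<j$. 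For the opposite direction $i>j$ I would invoke Serre duality inside $\cR$: the Serre functor of $X$ is $-\otimes\cO(-2k)[4k-4]$, which stabilizes $\cT$ up to shift and hence descends to $\cR$; a dimension count (using that $\cR$ has $k$ generators arising by uniform twists of a single object) then shows $S_\cR$ acts as a common shift on the $F_i$, from which the dual vanishing follows.

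The main obstacle, as I see it, is executing the second step uniformly in $k$: writing down an explicit resolution of $S^{k-1}\cU^*(j)$ by $\cO(0),\dots,\cO(2k-1)$ and then verifying that the Borel--Weil--Bott vanishings conspire so that every off-diagonal contribution indeed lands in $\cT$. A likely simplification is to exploit a symmetry — for instance, that the Serre functor of $\cR$ should cyclically permute (or preserve) the $F_i$ — which would reduce the number of pairs $(i,j)$ to be checked by hand and make the cancellations transparent.
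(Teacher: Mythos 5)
Your proposal has a concrete error at its foundation that would derail the rest of the argument: you misidentify the rectangular part of the Lefschetz collection. From the support partition $(k^k,(k-1)^k)$ the rectangular part indeed has partition $((k-1)^{2k})$, but each of its $2k$ blocks consists of the \emph{first $k-1$ objects} of the block $(\cO,\cU^*,\dots,S^{k-1}\cU^*)$, i.e., $\cA=\langle\cO,\cU^*,\dots,S^{k-2}\cU^*\rangle$ twisted by $\cO(j)$ for $0\le j\le 2k-1$. This is a subcategory of rank $2k(k-1)$, not the rank-$2k$ subcategory $\langle\cO,\cO(1),\dots,\cO(2k-1)\rangle$ that you call $\cT$. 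Consequently the ``projections'' $F_i$ you propose are not the generators of the residual subcategory, and the suggested resolution of $S^{k-1}\cU^*(j)$ by line bundles $\cO(m)$ with $m\in\{0,\dots,2k-1\}$ is both the wrong tool and in fact does not exist inside the (correct or incorrect) rectangular part; the actual resolution used in the paper (its sequence~\eqref{eq:complex-gr}) involves the bundles $S^p\cU^*$ with $0\le p\le k-1$ tensored with wedge powers of $V$, not just line bundles.

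The second issue is that the Serre-duality half of your argument is not substantiated and does not obviously go through. The Serre functor of $X$ sends $\cT$ (correctly identified or not) outside of itself, so it does not ``stabilize $\cT$ up to shift''; one can of course speak of the induced Serre functor $S_\cR=\LL_\cT\circ S_X$ on $\cR$, but the assertion that $S_\cR$ acts as a common shift on the $F_i$ is essentially equivalent to the complete orthogonality you are trying to prove, so invoking it at this stage is circular unless you compute $S_\cR$ independently. The paper sidesteps this entirely: from the single exact sequence~\eqref{eq:complex-gr} it produces two truncation resolutions~\eqref{eq:f-left} and~\eqref{eq:f-right} of the same object $F_i$, uses one resolution for the source and the other for the target, and reads off vanishing directly from the semiorthogonality inherent in a suitable twist of the Lefschetz decomposition~\eqref{eq:lefschetz-gr-explicit}. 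That treats the two directions $i<j$ and $i>j$ symmetrically and avoids both Borel--Weil--Bott and Serre duality.
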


\begin{thm}\label{theorem:residual-igr}
The residual subcategory of~\eqref{eq:igr2-2k} is equivalent to the derived category of representations of $A_{k-1}$ quiver.
\end{thm}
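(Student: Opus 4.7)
The plan is to identify explicit generators of the residual subcategory $\cA_{k-1}$, compute their $\Ext$-algebra, and recognize it as the path algebra of the $A_{k-1}$ quiver. First, a rank count: the rectangular part of~\eqref{eq:igr2-2k} has length $(k-1)(2k-1)$, and since $\dim H^*(\IG(2,2k))=2k(k-1)$, the residual has Grothendieck rank exactly $k-1$. The natural candidates for generators are the ``extras'' $S^{k-1}\cU^*(j)$ for $j=0,\dots,k-2$, but these do not themselves lie in $\cA_{k-1}$: within the Lefschetz ordering they are one-sidedly orthogonal only to the rectangular pieces on their left. I would therefore set $G_j := \mathbb{L}\bigl(S^{k-1}\cU^*(j)\bigr)$, where $\mathbb{L}$ is the left mutation through all the rectangular blocks lying to the right of $S^{k-1}\cU^*(j)$. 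By construction the $G_j$ lie in $\cA_{k-1}$ and, by fullness of the Lefschetz collection, they generate it as an exceptional collection.

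Next, I would compute $\Ext^\bullet_{\IG(2,2k)}(G_i,G_j)$. The mutation within the core $\langle\cO(j),\cU^*(j),\dots,S^{k-2}\cU^*(j)\rangle$ of a single block is controlled by the Clebsch--Gordan-type decomposition $S^{k-1}\cU^*\otimes S^{k-1}\cU\cong\bigoplus_{\ell=0}^{k-1}S^{2\ell}\cU^*\otimes\cO(-\ell)$ valid for rank-$2$ bundles, which pins down the evaluation maps appearing in the mutation triangles. The subsequent mutations across blocks are constrained by the $\cO(1)$-equivariance of the Lefschetz decomposition and can be assembled inductively. All resulting cohomological computations reduce to evaluating $H^\bullet(\IG(2,2k),S^a\cU^*\otimes\cO(b))$ for small $a,b$, which I would handle by Borel--Weil--Bott on $\G(2,2k)$ combined with the hyperplane-section sequence $0\to\cO_{\G}(-1)\to\cO_{\G}\to\cO_{\IG}\to 0$.

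The expected outcome is that $(G_0,G_1,\dots,G_{k-2})$ has the $\Ext$-pattern of the simple modules of the linearly oriented $A_{k-1}$ quiver: $\Ext^0(G_i,G_i)=\mathbb{C}$, $\Ext^1(G_i,G_{i+1})=\mathbb{C}$ for $0\le i\le k-3$, and all other $\Ext$-groups between distinct $G_i,G_j$ vanish. This is exactly an exceptional collection of simples in a hereditary category of global dimension one and type $A$, so by the standard reconstruction of a derived category from an exceptional collection whose $\Ext$-algebra coincides with the path algebra of a quiver, $\cA_{k-1}$ is equivalent to $D^b(\operatorname{rep}(A_{k-1}))$. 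The hard part will be the book-keeping of the mutations: each $G_j$ is a priori an iterated cone, and propagating it past the many rectangular blocks on its right may produce unwieldy complexes. A cleaner route would be to use Serre duality on $\IG(2,2k)$ to trade some left mutations for right ones, or even better, to find an a priori geometric realization of $\cA_{k-1}$ (say as the derived category of a small auxiliary variety), reducing the problem to a finite list of cohomological identities.
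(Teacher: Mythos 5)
Your strategic outline matches the paper's proof: pass to the residual subcategory, represent its generators as mutations of the ``extra'' objects $S^{k-1}\cU^*(j)$, compute their $\Ext$-algebra, and recognize the $A_{k-1}$ quiver. But the step you correctly flag as ``the hard part'' --- controlling the iterated cones produced by mutation --- is exactly where the paper introduces a device your plan lacks, and without it the computation does not go through cleanly. The paper builds a single long exact sequence on $\G(2,2k)$ by gluing a Koszul complex to its twisted dual via the canonical isomorphism $\Lambda^{k-1}\cU^\perp \cong \Lambda^{k-1}(V/\cU)\otimes\cO(-1)$, then defines sheaves $F_i$ as the cohomology of its truncations. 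Each $F_i$ therefore has \emph{two} explicit resolutions, one with negatively twisted terms and one with positively twisted terms. Lemma~\ref{lemma:igr-fi} identifies the mutated generators with these $F_i$ (up to twist and shift), and Lemma~\ref{lemma:gr-hom-fi-2} then reads off the $\Ext$-groups: for $|i-j|\ge 2$ the two resolutions land in semiorthogonal blocks of the Lefschetz decomposition, so vanishing is immediate, and for $|i-j|=1$ a single Borel--Bott--Weil computation of $\Ext^\bullet(S^{k-1}\cU^*, S^{k-1}\cU^*(1-k))\cong\C[3-2k]$ suffices. Your proposed alternatives (Serre duality, an auxiliary geometric model) would not eliminate the combinatorial explosion of Clebsch--Gordan bookkeeping; the $F_i$ resolutions are what reduce the whole problem to essentially one cohomology calculation.

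Two smaller corrections. The mutations used are \emph{left} mutations of $S^{k-1}\cU^*(j)$ through the rectangular blocks $\cIA, \cIA(1), \dots, \cIA(j)$ lying to its \emph{left} --- in the conventions of the paper the residual subcategory $\cIR_k$ sits to the left of the rectangular part in the semiorthogonal decomposition --- not through the blocks to its right, as you wrote. Also, the natural representatives $F_i(k-i)$ have their nonzero mutual $\Ext$ concentrated in degree $0$ rather than $1$; the paper introduces shifts, sending the $i$-th simple module to $F_{k+1-i}(i-1)[1-i]$, to produce the $\Ext^1$-pattern of the simple $A_{k-1}$-modules that you (correctly) anticipate.
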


We prove Theorem~\ref{theorem:residual-gr} in Section~\ref{subsection:usual}, and Theorem~\ref{theorem:residual-igr} in Section~\ref{subsection:isotropic} below.

\subsection{Usual Grassmannian}\label{subsection:usual}

Denote the block of the rectangular part of the collection~\eqref{eq:gr2-2k} by $\cA$, so that
\begin{equation*}
\cA = \langle \cO, \cU^*, \dots, S^{k-2}\cU^* \rangle.
\end{equation*}
The Lefschetz decomposition of $\G(2,2k)$ then can be rewritten as
\begin{multline}\label{eq:lefschetz-gr-explicit}
D^b(\G(2,2k)) = \langle 
\cA, S^{k-1}\cU^*, 
\cA(1), S^{k-1}\cU^*(1), 
\dots,
\cA(k-1), S^{k-1}\cU^*(k-1), \\
\cA(k), \cA(k+1),  \dots, \cA(2k-1) 
\rangle.
\end{multline}
We denote the corresponding residual subcategory of $\G(2,2k)$ by $\cR_k$, so that
\begin{equation*}
\cR_k = \langle \cA, \cA(1), \dots, \cA(2k-1) \rangle^\perp.
\end{equation*}
Using mutations of exceptional collections it is easy to deduce from~\eqref{eq:lefschetz-gr-explicit} that
\begin{equation}\label{eq:rk-1}
\cR_k = \langle 
\LL_{\cA}(S^{k-1}\cU^*), 
\LL_{\langle \cA,\cA(1) \rangle}(S^{k-1}\cU^*(1)), 
\dots
\LL_{\langle \cA,\cA(1),\dots,\cA(k-1) \rangle}(S^{k-1}\cU^*(k-1))
\rangle,
\end{equation}
where $\LL_\cA$ is the left mutation through $\cA$, $\LL_{\langle \cA,\cA(1) \rangle}$ is the left mutation through $\langle \cA, \cA(1) \rangle$, and so on.
So, it is enough to show that these objects (they are automatically exceptional) are completely orthogonal.

For this we will use an alternative description of the objects, obtained by using the following long exact sequence (constructed in~\cite[Proof of Lemma~4.3]{Ku}):
\begin{multline}\label{eq:complex-gr}
\scriptstyle
0 \to
S^{k-1}\cU^*(-k) \to
V \otimes S^{k-2}\cU^*(1-k) \to
\ldots \to
\Lambda^{k-2}V \otimes \cU^*(-2) \to
\Lambda^{k-1}V \otimes \cO_X(-1) \to
\\ \scriptstyle \to
\Lambda^{k-1}V^* \otimes \cO_X \to
\Lambda^{k-2}V^* \otimes \cU^* \to
\ldots \to
V^* \otimes S^{k-2}\cU^* \to
S^{k-1}\cU^* \to 0.
\end{multline}
Its second line is the Koszul complex, and its cohomology is isomorphic to $\Lambda^{k-1}\cU^\perp$.
Its first line is the dual of the second line twisted by $\cO_X(-1)$ taking into account isomorphisms $S^i\cU \cong S^i\cU^*(-i)$ (obtained by taking the symmetric powers of the isomorphism $\cU \cong \cU^*(-1)$). Consequently, the cohomology of the first line is isomorphic to $\Lambda^{k-1}(V/\cU) \otimes \cO_X(-1)$.
So, the canonical isomorphism
\begin{equation*}
\Lambda^{k-1}\cU^\perp \cong \Lambda^{k-1}(V/\cU) \otimes \cO_X(-1)
\end{equation*}
allows to glue the two lines into a single long exact sequence.

We denote by $F_i$ the cohomology of the subcomplex of~\eqref{eq:complex-gr} consisting of its first $i$ terms.
In other words, $F_i$ are defined by the following exact sequences
\begin{equation}\label{eq:f-left}
0 \to
S^{k-1}\cU^*(-k) \to
V \otimes S^{k-2}\cU^*(1-k) \to
\ldots \to
\Lambda^{i-1}V \otimes S^{k-i}\cU^*(i-k-1) \to
F_i \to 0.
\end{equation}

\begin{lemma}\label{lemma:gr-fi}
For every $1 \le i \le k$ there is an isomorphism 
\begin{equation*}
\LL_{\langle \cA,\dots,\cA(k-i) \rangle}(S^{k-1}\cU^*(k-i)) \cong F_{i}(k-i)[2k-i - 1].
\end{equation*}
\end{lemma}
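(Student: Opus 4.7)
The plan is to exhibit a specific resolution of $E := S^{k-1}\cU^*(k-i)$ by objects in $\cB := \langle \cA, \cA(1), \dots, \cA(k-i) \rangle$ whose kernel is $F_i(k-i)$, and then deduce the formula for the left mutation via uniqueness of the decomposition determined by the admissible pair $(\cB, \cB^\perp)$.

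First I would twist the long exact sequence~\eqref{eq:complex-gr} by $\cO_X(k-i)$ and split it at position $i$. Writing $C_j$ for the $j$-th term of~\eqref{eq:complex-gr}, exactness gives $F_i = \ker(C_i \to C_{i+1})$, so the right portion becomes an exact sequence
\begin{equation*}
0 \to F_i(k-i) \to C_i(k-i) \to C_{i+1}(k-i) \to \dots \to C_{2k-2}(k-i) \to E \to 0,
\end{equation*}
i.e.\ a resolution of $E$ of length $2k-1-i$ by the intermediate terms $C_j(k-i)$ for $j \in [i, 2k-2]$.

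Next I would check by direct inspection in two ranges that each intermediate term lies in $\cB$. For $j \in [i, k-1]$ one has $C_j(k-i) = \Lambda^j V \otimes S^{k-1-j}\cU^*(j-i)$, which lies in $\cA(j-i)$ since $k-1-j \le k-2$ (using $i \ge 1$) and $0 \le j-i \le k-i$. For $j \in [k, 2k-2]$ one has $C_j(k-i) = \Lambda^{2k-1-j}V^* \otimes S^{j-k}\cU^*(k-i)$, which lies in $\cA(k-i)$ since $0 \le j-k \le k-2$. Granting this, the convolution $B^\bullet$ of $[C_i(k-i) \to \dots \to C_{2k-2}(k-i)]$ is an object of the triangulated subcategory $\cB$, and its cohomologies ($F_i(k-i)$ in degree $-(2k-2-i)$ and $E$ in degree $0$) assemble into a distinguished triangle
\begin{equation*}
B^\bullet \to E \to F_i(k-i)[2k-1-i] \to B^\bullet[1].
\end{equation*}

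The main obstacle is to show that $F_i(k-i)[2k-1-i] \in \cB^\perp$, or equivalently that $\RHom(S^p\cU^*(l), F_i(k-i)) = 0$ for all $0 \le p \le k-2$ and $0 \le l \le k-i$. For this I would use the complementary (left) portion of~\eqref{eq:complex-gr} twisted by $(k-i)$, namely~\eqref{eq:f-left} twisted by $\cO_X(k-i)$, to present $F_i(k-i)$ as an iterated extension of the bundles $\Lambda^j V \otimes S^{k-1-j}\cU^*(j-i)$ for $j \in [0, i-1]$. This reduces the vanishing to checking $\Ext^*(S^p\cU^*(l), S^{k-1-j}\cU^*(j-i)) = 0$ for the indicated ranges of parameters, which is a direct Borel--Weil--Bott computation on $\G(2,2k)$ after decomposing $S^p\cU \otimes S^{k-1-j}\cU^*$ by the $GL_2$ Littlewood--Richardson rule and tracking the resulting twists. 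Once the orthogonality is established, uniqueness of the semiorthogonal decomposition attached to the admissible subcategory $\cB$ forces $\LL_\cB(E) \cong F_i(k-i)[2k-1-i]$, as required.
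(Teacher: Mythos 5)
Your plan follows the same overall structure as the paper's proof. You twist the long exact sequence~\eqref{eq:complex-gr} by $\cO_X(k-i)$, split it at position $i$, verify by inspection that the intermediate terms of the right portion (i.e.\ the twist of~\eqref{eq:f-right}) lie in $\cB := \langle\cA,\dots,\cA(k-i)\rangle$, package them into a triangle relating $S^{k-1}\cU^*(k-i)$ and $F_i(k-i)[2k-1-i]$ with cone in $\cB$, and then reduce to showing $F_i(k-i)\in\cB^\perp$ via the left portion~\eqref{eq:f-left}. Up to the very last step, this coincides with what the paper does.

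Where you diverge is in how you establish $F_i(k-i)\in\cB^\perp$. You propose to decompose $S^p\cU\otimes S^{k-1-j}\cU^*$ by Clebsch--Gordan and run Borel--Weil--Bott term by term. This \emph{would} go through: the summands have the form $S^a\cU^*(b)$ with $a\ge 0$ and $b = m-p+j-i-l$ constrained by $m\le p\le k-2$, $0\le j\le i-1$, $0\le l\le k-i$ to the window $b\in[-2k+2,-1]$, and on $\G(2,2k)$ the $\rho$-shifted weight $(a+b+2k-1,\,b+2k-2,\,2k-3,\dots,1,0)$ is singular in exactly that range, so all cohomology vanishes. The paper, however, sidesteps the representation theory entirely: it observes that the terms of~\eqref{eq:f-left} twisted by $\cO_X(k-i)$ lie in the first line of~\eqref{eq:lefschetz-gr-explicit} twisted by $\cO_X(-k)$, which is automatically right-orthogonal to the twisted second half $\langle\cA,\dots,\cA(k-1)\rangle$ by semiorthogonality of~\eqref{eq:lefschetz-gr-explicit}. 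That argument is shorter, reuses the already-known Lefschetz decomposition instead of new cohomology computations, and even delivers the stronger containment $F_i(k-i)\in\langle\cA,\dots,\cA(k-1)\rangle^\perp$. Your route is legitimate, but as written it leaves the crucial vanishing as an unverified assertion; if you go this way you should actually record the window $b\in[-2k+2,-1]$ and the singularity of the shifted weight, since that is where the whole lemma lives or dies.
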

\begin{proof}
Since~\eqref{eq:complex-gr} is exact, we have yet another exact sequence for $F_i$:
\begin{multline}\label{eq:f-right}
0 \to F_i \to \Lambda^iV \otimes S^{k-i-1}\cU^*(i-k) \to \dots \to 
\Lambda^{k-1}V \otimes \cO_X(-1) \to
\\ \to
\Lambda^{k-1}V^* \otimes \cO_X \to
\Lambda^{k-2}V^* \otimes \cU^* \to
\ldots \to
V^* \otimes S^{k-2}\cU^* \to
S^{k-1}\cU^* \to 0.
\end{multline}
Twisting it by $\cO_X(k-i)$ we note that all its terms (except for the leftmost $F_i(k-i)$ and the rightmost $S^{k-1}\cU^*(k-i)$) lie in the subcategory $\langle \cA, \dots, \cA(k-i-1), \cA(k-i) \rangle$.
In other words, the cone of the morphism 
\begin{equation*}
S^{k-1}\cU^*(k-i) \to F_i(k-i)[2k-i-1]
\end{equation*}
represented by the extension class of this exact sequence, is contained in the subcategory $\langle \cA, \dots, \cA(k-i-1), \cA(k-i) \rangle$.
So, to prove the claim of the lemma, it is enough to show that $F_i(k-i) \in \langle \cA, \dots \cA(k-i) \rangle^\perp$.
For this we use the twist by $\cO_X(k-i)$ of~\eqref{eq:f-left}.
The terms of this twist clearly belong to the first line of~\eqref{eq:lefschetz-gr-explicit} twisted by $\cO_X(-k)$, 
hence to the orthogonal of the second half of~\eqref{eq:lefschetz-gr-explicit} twisted by $\cO_X(-k)$, i.e., to $\langle \cA, \cA(1), \dots, \cA(k-1) \rangle^\perp$.
This gives the required embedding.
\end{proof}

Combining this lemma with~\eqref{eq:rk-1} we deduce an exceptional collection
\begin{equation}\label{eq:rk-2}
\cR_k = \langle F_k, F_{k-1}(1), \dots, F_1(k-1) \rangle.
\end{equation} 
To prove the Theorem it remains to show it is completely orthogonal.

\begin{lemma}\label{lemma:gr-hom-fi}
We have $\Ext^\bullet(F_i(k-i),F_j(k-j)) = 0$ for all $1 \le j < i \le k$.
\end{lemma}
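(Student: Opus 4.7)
The strategy is to replace $F_i(k-i)$ in $\Ext^\bullet(F_i(k-i),F_j(k-j))$ by the simpler generator $S^{k-1}\cU^*(k-i)$ using Lemma~\ref{lemma:gr-fi} and mutation adjunction, and then to evaluate the resulting $\Ext$ via the right resolution~\eqref{eq:f-right} of $F_j$ together with Borel--Weil--Bott on $\G(2,2k)$.

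\textbf{Step 1 (Mutation adjunction).} By Lemma~\ref{lemma:gr-fi},
\[
F_i(k-i)[2k-i-1]\cong\LL_{\langle\cA,\cA(1),\ldots,\cA(k-i)\rangle}\bigl(S^{k-1}\cU^*(k-i)\bigr).
\]
Since $j<i$ gives $k-j>k-i$, we have an inclusion of subcategories $\langle\cA,\ldots,\cA(k-i)\rangle\subset\langle\cA,\ldots,\cA(k-j)\rangle$. As $F_j(k-j)\in\cR_k$ is orthogonal to the larger one, it is in particular orthogonal to the smaller one. The standard identity $\Ext^\bullet(\LL_\cB X,Y)\cong\Ext^\bullet(X,Y)$ for $Y\in\cB^\perp$ (derived from the defining triangle of $\LL_\cB X$) then yields
\[
\Ext^\bullet\bigl(F_i(k-i),F_j(k-j)\bigr)\cong\Ext^\bullet\bigl(S^{k-1}\cU^*(k-i),F_j(k-j)\bigr)[i+1-2k].
\]
Cancelling the common twist reduces the lemma to the vanishing
\[
\Ext^\bullet\bigl(S^{k-1}\cU^*,F_j(s)\bigr)=0,\qquad s:=i-j\in\{1,\ldots,k-1\}.
\]

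\textbf{Step 2 (Resolution and Borel--Weil--Bott).} Apply $\RHom\bigl(S^{k-1}\cU^*,-\bigr)$ to the twist of~\eqref{eq:f-right} by $\cO(s)$. This presents $\RHom(S^{k-1}\cU^*,F_j(s))$ as the cohomology of a complex whose terms are $\Lambda^pV\otimes\RHom\bigl(S^{k-1}\cU^*,S^{k-1-p}\cU^*(p-j+s)\bigr)$ for $j\le p\le k-1$ together with the Koszul-type terms $\Lambda^{k-1-q}V^*\otimes\RHom(S^{k-1}\cU^*,S^q\cU^*(s))$ for $0\le q\le k-1$. Using $S^{k-1}\cU\cong S^{k-1}\cU^*(-(k-1))$ and the Pieri rule
\[
S^{k-1}\cU^*\otimes S^r\cU^*=\bigoplus_{a=0}^{\min(r,k-1)}\Sigma^{(k-1+r-a,a)}\cU^*,
\]
each piece reduces to $H^\bullet\bigl(\G(2,2k),\Sigma^{(\lambda_1,\lambda_2)}\cU^*\bigr)$ for explicit $(\lambda_1,\lambda_2)$ depending linearly on $(p,a,s)$ or $(q,a,s)$. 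A direct inspection of the dot-action of the Weyl group on the weights $(\lambda_1+2k-1,\lambda_2+2k-2,2k-3,\ldots,0)$ shows that each such weight is singular in the range $1\le s\le k-1$, so every summand is acyclic.

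\textbf{Main obstacle.} The delicate point is Step 2: although the reduction to a Borel--Weil--Bott computation is clean, the strip $1\le s\le k-1$ is precisely the range in which the Pieri weights can sit on a Weyl wall, so singularity must be verified uniformly over all pairs $(p,a)$ and $(q,a)$ arising from the two halves of the resolution. Once one checks that every such weight has a coincidence with one of the trailing entries $2k-3,2k-4,\ldots,0$ (equivalently, a repetition after adding $\rho$), all individual terms vanish and the lemma follows, bypassing the need to analyze differentials in the associated spectral sequence.
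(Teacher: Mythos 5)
Your Step~1 is correct, and it is a clean simplification that the paper does not use: the mutation adjunction $\Ext^\bullet(\LL_\cB X,Y)\cong\Ext^\bullet(X,Y)$ for $Y\in\cB^\perp$ replaces the source $F_i(k-i)$ by $S^{k-1}\cU^*(k-i)$, reducing the lemma to $\Ext^\bullet(S^{k-1}\cU^*,F_j(s))=0$ with $s=i-j\in\{1,\dots,k-1\}$. The error is in Step~2, where you plug in the \emph{right} resolution~\eqref{eq:f-right} of $F_j$ and assert that every Borel--Weil--Bott term vanishes. That is false. The rightmost term of that complex, after twisting, is $S^{k-1}\cU^*(s)$, and since $S^{k-1}\cU\otimes S^{k-1}\cU^*$ contains $\cO$ as a direct summand,
\begin{equation*}
\Hom\bigl(S^{k-1}\cU^*,\,S^{k-1}\cU^*(s)\bigr)\;\supseteq\; H^0\bigl(\G(2,2k),\cO(s)\bigr)\;\neq\; 0
\end{equation*}
for every $s\ge 1$; several of the other Koszul-side terms $\Lambda^{k-1-q}V^*\otimes S^q\cU^*(s)$ likewise contribute nonzero cohomology. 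So the spectral sequence you hoped to bypass does not degenerate term-by-term, and the vanishing is not established by this route. (There is also a typo: the exponent on the non-Koszul terms should read $p-k+s$, not $p-j+s$.)

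The remedy is to use the \emph{left} resolution~\eqref{eq:f-left} of $F_j$ --- exactly what the paper does for the target object in its own proof. Its terms are $\Lambda^pV\otimes S^{k-1-p}\cU^*(p-k+s)$ for $0\le p\le j-1$, and for these the Bott weights really are uniformly singular: after Pieri, the second entry of the relevant dotted weight equals $a+p+s-1\in\{0,\dots,2k-3\}$ for all admissible $a,p,s$, so it always collides with a trailing entry. But even this computation is unnecessary. Twisting the resolution by $\cO(k-s)$, its terms land in $\cA(p)$ for $1\le p\le j-1$ or equal $S^{k-1}\cU^*$ for $p=0$, while the source becomes $S^{k-1}\cU^*(k-s)$ with $1\le k-s\le k-1$; by~\eqref{eq:lefschetz-gr-explicit} the source sits strictly to the right of all these blocks, so the vanishing follows from semiorthogonality alone, with no Borel--Weil--Bott at all. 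The paper's proof is precisely this semiorthogonality argument, applied to resolutions of both $F_i$ and $F_j$ simultaneously; your Step~1 would shorten it by eliminating the resolution on the source side.
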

\begin{proof}
To show this we use for $F_i(k-i)$ resolution~\eqref{eq:f-right} twisted by $\cO_X(k-i)$,
and for $F_j(k-j)$ resolution~\eqref{eq:f-left} twisted by $\cO_X(k-j)$.
The terms of the first are contained in the subcategory
\begin{equation*}
\langle \cA, \dots, \cA(k-i-1), \cA(k-i), S^{k-1}\cU^*(k-i) \rangle,
\end{equation*}
and the terms of the second are contained in the subcategory
\begin{equation*}
\langle S^{k-1}\cU^*(-j), \cA(1-j), \dots, \cA(-1) \rangle.
\end{equation*}
All the components of these subcategories are clearly semiorthogonal --- this follows immediately from the twist of~\eqref{eq:lefschetz-gr-explicit} by $\cO_X(-j)$,
since $-j < k-i \le k - 1 - j$ by the assumption on $i$ and $j$ taken in the lemma.
\end{proof}

This Lemma together with the exceptional collection~\eqref{eq:rk-2} proves Theorem~\ref{theorem:residual-gr}.

\subsection{Isotropic Grassmannian}\label{subsection:isotropic}

Denote the block of the rectangular part of the collection~\eqref{eq:igr2-2k} by $\cIA$, so that
\begin{equation*}
\cIA = \langle \cO, \cU^*, \dots, S^{k-2}\cU^* \rangle.
\end{equation*}
The Lefschetz decomposition of $\IG(2,2k)$ then can be rewritten as
\begin{multline}\label{eq:lefschetz-igr-explicit}
D^b(\IG(2,2k)) = \langle 
\cIA, S^{k-1}\cU^*, 
\cIA(1), S^{k-1}\cU^*(1), 
\dots,
\cIA(k-2), S^{k-1}\cU^*(k-2), \cIA(k-1), \\
\cIA(k),  \dots, \cIA(2k-2) 
\rangle.
\end{multline}
We denote the corresponding residual subcategory of $\IG(2,2k)$ by $\cIR_k$, so that
\begin{equation*}
\cIR_k = \langle \cIA, \cIA(1), \dots, \cIA(2k-2) \rangle^\perp.
\end{equation*}
Using mutations of exceptional collections it is easy to deduce from~\eqref{eq:lefschetz-igr-explicit} that
\begin{equation}\label{eq:bar-rk}
\cIR_k = \langle 
\LL_{\cIA}(S^{k-1}\cU^*), 
\LL_{\langle \cIA,\cIA(1) \rangle}(S^{k-1}\cU^*(1)), 
\dots
\LL_{\langle \cIA,\cIA(1),\dots,\cIA(k-2) \rangle}(S^{k-1}\cU^*(k-2))
\rangle.
\end{equation}
The first step of the proof in this case is to show that these objects are given by the (restrictions to $\IG(2,2k)$) of the same sheaves $F_i$ as before.

\begin{lemma}\label{lemma:igr-fi}
For every $2 \le i \le k$ there is an isomorphism
\begin{equation*}
\LL_{\langle \cIA,\dots,\cIA(k-i) \rangle}(S^{k-1}\cU^*(k-i)) \cong F_{i}(k-i)[2k-i - 1].
\end{equation*}
\end{lemma}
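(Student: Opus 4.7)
The plan is to adapt the proof of Lemma~\ref{lemma:gr-fi} verbatim to the isotropic setting, replacing $\cA$ by $\cIA$ and the Lefschetz decomposition~\eqref{eq:lefschetz-gr-explicit} by~\eqref{eq:lefschetz-igr-explicit}. The only place where extra care is required is that, in contrast to the Grassmannian case, the object $S^{k-1}\cU^*(k-1)$ is not in the isotropic Lefschetz collection, which is what forces the restriction $i \ge 2$ in the statement.

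First I would record that the complex~\eqref{eq:complex-gr} and its truncations~\eqref{eq:f-left}--\eqref{eq:f-right} are exact sequences of vector bundles on $\G(2,2k)$, and therefore remain exact after restriction to the hyperplane section $\IG(2,2k) \subset \G(2,2k)$. Hence the sheaves $F_i$ are well-defined on $\IG(2,2k)$ and fit into both resolutions there. I would then twist~\eqref{eq:f-right} by $\cO_X(k-i)$ and check that every interior term is of the form $S^l\cU^*(s)$ with $0 \le l \le k-2$ and $0 \le s \le k-i$, so belongs to $\langle \cIA, \cIA(1), \dots, \cIA(k-i) \rangle$; the extension class of this complex therefore furnishes a morphism $S^{k-1}\cU^*(k-i) \to F_i(k-i)[2k-i-1]$ whose cone sits inside $\langle \cIA, \cIA(1), \dots, \cIA(k-i) \rangle$.

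Then I would twist~\eqref{eq:f-left} by $\cO_X(k-i)$; its terms are $S^l\cU^*(s)$ with $l \in \{k-i,\dots,k-1\}$ and $s = k-1-l-i \in \{-i,\dots,-1\}$. For the terms with $l \le k-2$, which lie in $\cIA(s)$, the required vanishing $\Ext^\bullet(\cIA(j), \cIA(s)) = 0$ for $0 \le j \le k-i$ follows from~\eqref{eq:lefschetz-igr-explicit} after a common twist by $\cO(-t)$ bringing both blocks into the decomposition, since the gap $j-s \le k-1$ remains within the admissible range $2k-2$. For the distinguished term $S^{k-1}\cU^*(-i)$ the vanishing $\Ext^\bullet(\cIA(j), S^{k-1}\cU^*(-i)) = 0$ for $0 \le j \le k-i$ follows analogously from the Lefschetz pattern $\dots,\cIA(s), S^{k-1}\cU^*(s), \cIA(s+1),\dots$ appearing in~\eqref{eq:lefschetz-igr-explicit}, the inequality $j+i \le k \le 2k-2$ guaranteeing that the shifted blocks still fit into the SOD. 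This shows $F_i(k-i) \in \langle \cIA, \cIA(1), \dots, \cIA(k-i) \rangle^\perp$, and combining with the previous step gives the claimed mutation identity exactly as in the proof of Lemma~\ref{lemma:gr-fi}.

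The main delicate point is the last vanishing: because the isotropic Lefschetz decomposition has the asymmetric shape $(k^{k-1},(k-1)^k)$ rather than the rectangular pattern $(k^k,(k-1)^k)$ used in the Grassmannian case, the $S^{k-1}\cU^*$-blocks are present only up to twist $k-2$, and one must verify carefully that every $\Ext$-computation involving $S^{k-1}\cU^*(-i)$ can still be placed inside the Lefschetz SOD after a uniform shift. This is precisely where the hypothesis $i \ge 2$ enters.
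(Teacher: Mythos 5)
Your proof is correct and follows essentially the same route as the paper's: restrict the exact sequences~\eqref{eq:f-left} and~\eqref{eq:f-right} to $\IG(2,2k)$ (which is legitimate since the $F_i$ are locally free, so the long exact complex is built from locally split short exact sequences) and rerun the mutation argument of Lemma~\ref{lemma:gr-fi} inside the isotropic Lefschetz decomposition~\eqref{eq:lefschetz-igr-explicit}. Where you verify the required $\Ext$-vanishing term by term, the paper makes the single observation that the terms of the twisted~\eqref{eq:f-left} lie in the first line of~\eqref{eq:lefschetz-igr-explicit} twisted by $\cO_X(-k)$, hence in the orthogonal of $\langle \cIA,\dots,\cIA(k-2)\rangle$, which contains $\langle \cIA,\dots,\cIA(k-i)\rangle$ exactly because $i\ge 2$; the two are just different packagings of the same computation.

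One remark in your last paragraph is slightly off, though it does not affect the proof. The vanishing $\Ext^\bullet(\cIA(j), S^{k-1}\cU^*(-i))=0$ for $0\le j\le k-i$ in fact also holds when $i=1$: twisting by $\cO(i)$ one gets $\Ext^\bullet(\cIA(j+i), S^{k-1}\cU^*)$ with $1\le j+i\le k\le 2k-2$, and $\cIA(j+i)$ sits to the right of $S^{k-1}\cU^*$ in~\eqref{eq:lefschetz-igr-explicit}. So this is not really ``where $i\ge 2$ enters'' in your version of the argument. In the paper's streamlined version the role of $i\ge 2$ is to guarantee that $S^{k-1}\cU^*(-i)$ appears in the first line of~\eqref{eq:lefschetz-igr-explicit} twisted by $\cO_X(-k)$ (which only contains $S^{k-1}\cU^*(s)$ for $-k\le s\le -2$); for the statement of the lemma itself the restriction simply reflects, as you correctly say at the outset, that $S^{k-1}\cU^*(k-1)$ does not occur in~\eqref{eq:lefschetz-igr-explicit} and so $i=1$ does not arise in~\eqref{eq:bar-rk}. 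The remark following Lemma~\ref{lemma:igr-fi} confirms that $F_1(k-1)=S^{k-1}\cU^*(-1)$ does nevertheless lie in $\cIR_k$.
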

\begin{proof}
The same proof as in Lemma~\ref{lemma:gr-fi} applies. 
We restrict exact sequences~\eqref{eq:f-left} and~\eqref{eq:f-right} to $X = \IG(2,2k)$.
Twisting the restriction of~\eqref{eq:f-right} by $\cO_X(k-i)$ we note that all its terms (except for the leftmost $F_i(k-i)$ and the rightmost $S^{k-1}\cU^*(k-i)$) lie in the subcategory $\langle \cIA, \dots, \cIA(k-i-1), \cIA(k-i) \rangle$.
In other words, the cone of the morphism 
\begin{equation*}\label{eq:f-s}
S^{k-1}\cU^*(k-i) \to F_i(k-i)[2k-i-1]
\end{equation*}
represented by the extension class of this exact sequence, is contained in the subcategory $\langle \cIA, \dots, \cIA(k-i-1), \cIA(k-i) \rangle$.
So, to prove the claim of the lemma, it is enough to show that $F_i(k-i) \in \langle \cIA, \dots \cIA(k-i) \rangle^\perp$.
For this we use the twist by $\cO_X(k-i)$ of the restriction of~\eqref{eq:f-left}.
The terms of this twist belong to the first line of~\eqref{eq:lefschetz-igr-explicit} twisted by $\cO_X(-k)$, 
hence to the orthogonal of the second line of~\eqref{eq:lefschetz-igr-explicit} twisted by $\cO_X(-k)$, i.e., to $\langle \cIA, \cIA(1), \dots, \cIA(k-2) \rangle^\perp$.
This gives the required embedding.
\end{proof}

Combining this lemma with~\eqref{eq:bar-rk} we deduce an exceptional collection
\begin{equation}\label{eq:bar-rk-2}
\cIR_k = \langle F_k, F_{k-1}(1), \dots, F_2(k-2) \rangle.
\end{equation} 

\begin{rmk}
Note that the object $F_1(k-1) = S^{k-1}\cU^*(-1)$ also belongs to $\cIR_k$.
This follows easily by mutating the first component $\cIA$ of~\eqref{eq:lefschetz-igr-explicit} to the right (it gets twisted by the anticanonical class $\cO_X(2k-1)$), and then twisting the resulting decomposition by $\cO_X(-1)$.
\end{rmk}

To prove the Theorem it remains to compute $\Ext$-spaces between its objects.

\begin{lemma}\label{lemma:gr-hom-fi-2}
Assume $1 \le j < i \le k$.
We have 
\begin{equation*}
\Ext^\bullet(F_i(k-i),F_j(k-j)) = 
\begin{cases}
\C, & \text{if $i = j + 1$},\\
0, & \text{otherwise.}
\end{cases}
\end{equation*}
\end{lemma}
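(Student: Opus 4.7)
My plan is to mirror the proof of Lemma~\ref{lemma:gr-hom-fi}: present $F_i(k-i)$ as a complex via~\eqref{eq:f-right} twisted by $\cO(k-i)$ and $F_j(k-j)$ via~\eqref{eq:f-left} twisted by $\cO(k-j)$, and then compute $\RHom_\IG(F_i(k-i), F_j(k-j))$ from the associated double complex of $\Ext$'s between the individual terms, killing as many entries as possible using Lefschetz semiorthogonality on $\IG(2,2k)$.

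After twisting, the terms of $F_i(k-i)$ other than $S^{k-1}\cU^*(k-i)$ lie in $\cIA(0) \cup \cdots \cup \cIA(k-i)$, while the terms of $F_j(k-j)$ other than $S^{k-1}\cU^*(-j)$ lie in $\cIA(-j+1) \cup \cdots \cup \cIA(-1)$. Repeating the argument in Lemma~\ref{lemma:gr-hom-fi}, but now with the length-$(2k-1)$ collection~\eqref{eq:lefschetz-igr-explicit}, I expect all pairings $\cIA$-to-$\cIA$, $S^{k-1}\cU^*(k-i)$-to-$\cIA$, and $\cIA$-to-$S^{k-1}\cU^*(-j)$ to vanish: in each case the relevant twist differences fall inside the semiorthogonal window $[-(2k-2),-1]$ as soon as $i > j$. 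The only possibly non-zero contribution is then between the two special endpoints: $\RHom_\IG(S^{k-1}\cU^*(k-i), S^{k-1}\cU^*(-j)) \cong \RHom_\IG(S^{k-1}\cU^*(k-i+j), S^{k-1}\cU^*)$. Since the $S^{k-1}\cU^*$-block of~\eqref{eq:lefschetz-igr-explicit} occupies twists $0, 1, \dots, k-2$, setting $s = k-i+j \in \{1,\dots,k-1\}$, semiorthogonality kills this $\Ext$ exactly when $s \le k-2$, which corresponds to $i > j+1$. This settles the case $i > j+1$.

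For the boundary case $i = j+1$ (where $s = k-1$, and $S^{k-1}\cU^*(k-1)$ lies just outside the Lefschetz collection on $\IG$), I would compute the residual $\Ext$ by restriction from $\G(2,2k)$. The hyperplane section sequence $0 \to \cO_\G(-1) \to \cO_\G \to i_*\cO_\IG \to 0$ yields a long exact sequence relating $\RHom_\IG$ and $\RHom_\G$; since $S^{k-1}\cU^*(k-1)$ \emph{does} belong to the longer Lefschetz collection on $\G$, semiorthogonality on $\G$ forces $\RHom_\G(S^{k-1}\cU^*(k-1), S^{k-1}\cU^*) = 0$, and the long exact sequence identifies $\RHom_\IG(S^{k-1}\cU^*(k-1), S^{k-1}\cU^*)$ with a shift of $\RHom_\G(S^{k-1}\cU^*, S^{k-1}\cU^*(-k)) = H^\bullet(\G, S^{k-1}\cU \otimes S^{k-1}\cU^*(-k))$. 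Using $\cU \cong \cU^*(-1)$ on $\G(2,2k)$ together with the Clebsch--Gordan rule for tensor products of Schur functors of a rank-$2$ bundle, $S^{k-1}\cU \otimes S^{k-1}\cU^*(-k)$ decomposes as a direct sum of Schur modules of $\cU^*$, to each of which I would apply Borel--Weil--Bott; I expect exactly one summand to survive, yielding a one-dimensional class in cohomological degree $2k-2$. Tracking this class back through the hyperplane reduction, the two resolutions, and the bidegrees of the double complex will place it in $\Ext^0_\IG(F_{j+1}(k-j-1), F_j(k-j))$, as claimed.

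The hardest step will be the last one: after all the semiorthogonality vanishings have isolated a single surviving $\Ext$ between the two ``$S^{k-1}\cU^*$-endpoints'', one must actually carry out the Borel--Weil--Bott computation and then carefully chase through several cohomological shifts (coming from the two complex representations, from the Serre twists, and from the hyperplane section reduction) in order to verify that the answer is $\C$ concentrated in degree $0$ rather than in some shifted degree.
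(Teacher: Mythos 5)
Your proposal is correct and follows the same strategy as the paper: the same resolutions \eqref{eq:f-right} and \eqref{eq:f-left} (restricted and twisted), the same Lefschetz semiorthogonality argument killing all pairings for $i \ge j+2$, and the same reduction of $\Ext^\bullet(F_i(k-i),F_j(k-j))$ to $\RHom_\IG(S^{k-1}\cU^*,S^{k-1}\cU^*(1-k))$ (up to a computable shift) when $i = j+1$. The only deviation is the final Borel--Bott--Weil step: the paper evaluates this $\RHom$ directly on $\IG(2,2k)$, whereas you propose passing through the hyperplane-section triangle $\cO_\G(-1)\to\cO_\G\to i_*\cO_\IG$ to reduce it --- after noting $\RHom_\G(S^{k-1}\cU^*(k-1),S^{k-1}\cU^*)=0$ by the Lefschetz collection on $\G(2,2k)$ --- to $\RHom_\G(S^{k-1}\cU^*,S^{k-1}\cU^*(-k))[1]$, and then do Clebsch--Gordan plus type-$A$ BWB on $\G(2,2k)$; this is a legitimate variant that indeed yields a single surviving $\C$ in degree $2k-2$ on $\G$, hence $\C[3-2k]$ on $\IG$, and the shift bookkeeping you outline correctly lands the answer in $\Ext^0$.
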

\begin{proof}
To show this we use for $F_i(k-i)$ (the restriction of) resolution~\eqref{eq:f-right} twisted by $\cO_X(k-i)$,
and for $F_j(k-j)$ (the restriction of) resolution~\eqref{eq:f-left} twisted by $\cO_X(k-j)$.
The terms of the first are contained in the subcategory
\begin{equation*}
\langle \cIA, \dots, \cIA(k-i-1), \cIA(k-i), S^{k-1}\cU^*(k-i) \rangle,
\end{equation*}
and the terms of the second are contained in the subcategory
\begin{equation*}
\langle S^{k-1}\cU^*(-j), \cIA(1-j), \dots, \cIA(-1) \rangle.
\end{equation*}
In case of $i \ge j + 2$ all the components of these subcategories are clearly semiorthogonal --- this follows immediately from the twist of~\eqref{eq:lefschetz-igr-explicit} by $\cO_X(-j)$,
since $-j < k-i \le k - 2 - j$ by the assumption on $i$ and $j$.
If, however, $i = j + 1$ we do not have a semiorthogonality between $S^{k-1}\cU^*(k-i)$ and $S^{k-1}\cU^*(-j)$.
On a contrary, a direct Borel--Bott--Weil calculation shows that
\begin{equation*}
\begin{aligned}
\Ext^\bullet(S^{k-1}\cU^*(k-i),S^{k-1}\cU^*(-j)) &\cong
\Ext^\bullet(S^{k-1}\cU^*,S^{k-1}\cU^*(i-j-k)) \\ &\cong
\Ext^\bullet(S^{k-1}\cU^*,S^{k-1}\cU^*(1-k)) \cong \C[3-2k].
\end{aligned}
\end{equation*}
Finally, using exact sequences~\eqref{eq:f-left} and~\eqref{eq:f-right} we deduce that
\begin{align*}
\Ext^\bullet(F_i(k-i),F_j(k-j)) 
& \cong \Ext^\bullet(S^{k-1}\cU^*(k-i)[i+1-2k],S^{k-1}\cU^*(-j)[j-1]) \\
& \cong \Ext^\bullet(S^{k-1}\cU^*(k-i),S^{k-1}\cU^*(-j))[2k-3] \cong \C.\qedhere
\end{align*}
\end{proof}

\smallskip

\begin{rmk}
A chain of morphisms $F_k \to F_{k-1}(1) \to \dots \to F_{2}(k-2) \to F_{1}(k-1)$ computed in Lemma~\textup{\ref{lemma:gr-hom-fi-2}} can be described as follows.
Consider the bicomplex of~\cite[Proposition~5.3]{Ku}. Replacing $m$ by $k$ and $W$ by $V$ \textup{(}according to the difference in the notation\textup{)}, 
and identifying $V$ with $V^*$ via the symplectic form, we note that the columns of the obtained bicomplex coincide with the sequences~\eqref{eq:f-left} twisted by $\cO(2k-i)$.
Since the totalization of the bicomplex is exact, we conclude that the cohomology sheaves of the vertical differential of the bicomplex form an exact sequence
\begin{equation*}
0 \to F_k(k) \to F_{k-1}(k-1) \to \dots \to F_2(2k-2) \to F_1(2k-1) \to 0,
\end{equation*}
from which the required chain can be obtained by a twist.
\end{rmk}

\smallskip

Lemma \ref{lemma:gr-hom-fi-2} together with the exceptional collection~\eqref{eq:rk-2} proves Theorem~\ref{theorem:residual-igr}.
Indeed, we can define the functor $D^b(A_{k-1}) \to \cIR_k$ by the rule 
\begin{equation*}
S_i \mapsto F_{k+1-i}(i-1)[1-i],
\qquad 1  \le i \le k-1
\end{equation*}
where $S_i$ stand for the simple representations of the quiver. Since
\begin{equation*}
\Ext^\bullet(S_i,S_j) = 
\begin{cases}
\C[-1], & \text{if $i = j - 1$},\\
0, & \text{otherwise,}
\end{cases}
\end{equation*}
it follows that the functor is fully faithful, and by~\eqref{eq:bar-rk-2} it is essentially surjective.

\begin{rmk}
Under this equivalence the extra object $F_1(k-1) \in \cIR_k$ corresponds (up to a shift) to the projective module of the first vertex of the quiver.
\end{rmk}

\bigskip


\begin{thebibliography}{10}

\bibitem{AGV1} V.~I.~Arnold, S.~M.~Gusein-Zade, A.~N.~Varchenko. {\it Singularities of differentiable maps. Volume 1. Classification of critical points, caustics and wave fronts.} Translated from the Russian by Ian Porteous based on a previous translation by Mark Reynolds. Reprint of the 1985 edition. Modern Birkh\"auser Classics. Birkh\"auser/Springer, New York, 2012. xii+382 pp.

\bibitem{Ba} A.~Bayer. {\it Semisimple quantum cohomology and blow-ups.} Int. Math. Res. Not. 2004, no. 40, 2069--2083


\bibitem{BKT} A.S.~Buch, A.~Kresch and H.~Tamvakis. \emph{Quantum Pieri rules for isotropic Grassmannians}. Invent. Math. {\bf 178} (2009), no. 2, 345--405. 

\bibitem{ChMaPe} P.-E.~Chaput, L.~Manivel, N.~Perrin. {\it Quantum cohomology of minuscule homogeneous spaces III : semisimplicity and consequences.} Canad. J. Math. 62 (2010), no. 6, 1246--1263.

\bibitem{ChPe} P.-E.~Chaput, N.~Perrin. {\it On the quantum cohomology of adjoint varieties.} Proc. Lond. Math. Soc. (3) 103 (2011), no. 2, 294--330. 

\bibitem{CoIr} T.~Coates, H.~Iritani. {\it On the Convergence of Gromov--Witten Potentials and Givental's Formula}. Preprint arXiv:1203.4193

\bibitem{Du} B.~Dubrovin. {\it Geometry and analytic theory of Frobenius manifolds.} Proceedings of the International Congress of Mathematicians, Vol. II (Berlin, 1998). Doc. Math. 1998, Extra Vol. II, 315--326. 

\bibitem{Du-Strasbourg} B.~Dubrovin. \emph{Quantum cohomology and isomonodromic deformation. } Lecture at ``Recent Progress in the Theory of Painlev\'{e} Equations: Algebraic, asymptotic and topological aspects'', Strasbourg, November 2013.  


\bibitem{Fo} A.~Fonarev. {\it Minimal Lefschetz decompositions of the derived categories for Grassmannians.} (Russian) Izv. Ross. Akad. Nauk Ser. Mat. 77 (2013), no. 5, 203--224; translation in Izv. Math. 77 (2013), no. 5, 1044--1065.


\bibitem{FuPa} W.~Fulton, R.~Pandharipande. {\em Notes on stable maps and quantum cohomology}. Algebraic geometry -- Santa Cruz 1995, 45--96,
Proc. Sympos. Pure Math., 62, Part 2, Amer. Math. Soc., Providence, RI, 1997.

\bibitem{GaGoIr} S.~Galkin, V.~Golyshev, H.~Iritani. {\it Gamma classes and quantum cohomology of Fano manifolds: Gamma conjectures.} Duke Math. J. 165 (2016), no. 11, 2005--2077.

\bibitem{GMS} S.~Galkin, A.~Mellit, M.~Smirnov. \emph{Dubrovin's conjecture for $\IG(2,6)$.} Int. Math. Res. Not. IMRN 2015, no. 18, 8847--8859.

\bibitem{Ga} S.~Ganatra. {\it Automatically generating Fukaya categories and computing quantum cohomology}. Preprint arXiv:1605.07702


\bibitem{He} C.~Hertling. {\it Frobenius manifolds and moduli spaces for singularities.} Cambridge Tracts in Mathematics, 151. Cambridge University Press, Cambridge, 2002. x+270 pp.

\bibitem{HeMail} C.~Hertling. Email communication dated 04.04.2016.

\bibitem{HeMa} C.~Hertling, Yu.~Manin. {\it Weak Frobenius manifolds.} Internat. Math. Res. Notices 1999, no. 6, 277--286. 


\bibitem{HeMaTe} C.~Hertling, Yu.~Manin, C.~Teleman. {\it An update on semisimple quantum cohomology and $F$-manifolds.} Proc. Steklov Inst. Math. 264 (2009), no. 1, 62--69.
 
\bibitem{K} S.~L.~Kleiman. {\it The transversality of a general translate.} Compositio Math. {\bf 28} (1974), 287--297. 

\bibitem{Ku07} A.~Kuznetsov. {\it Homological projective duality.} Publ. Math. Inst. Hautes \'Etudes Sci. (105):157--220, 2007.

\bibitem{Ku} A.~Kuznetsov. {\it Exceptional collections for Grassmannians of isotropic lines.} Proc. Lond. Math. Soc. (3) 97 (2008), no.~1, 155--182.

\bibitem{Ku14} A.~Kuznetsov. {\it Semiorthogonal decompositions in algebraic geometry.} Proceedings of the International Congress of Mathematicians, Vol. {II} ({S}eoul, 2014), pages 635--660, 2014.


\bibitem{MaLa} J.M.~Landsberg, L.~Manivel. {\em On the projective geometry of rational homogeneous varieties}. Comment. Math. Helv. {\bf 78} (2003), no. 1, 65--100.


\bibitem{LinPo} K.~H.~Lin, D.~Pomerleano. {\it Global matrix factorizations.} Math. Res. Lett. {\bf 20} (2013), no. 1, 91--106. 


\bibitem{Ma} Yu.~I.~Manin. {\it Frobenius manifolds, quantum cohomology, and moduli spaces.} AMS Colloquium Publ. 47, Providence RI, 1999, 303 pp.

\bibitem{Or} D.~Orlov. {\it Landau--Ginzburg models, D-branes and mirror symmetry.} Mat. Contemp. 41 (2012), 75--112. 

\bibitem{Pe} N.~Perrin. {\it Semisimple quantum cohomology of some Fano varieties}. Preprint arXiv:1405.5914

\bibitem{RuTi} Y.~Ruan, G.~Tian. {\it A mathematical theory of quantum cohomology.} J. Differential Geom. {\bf 42} (1995), no. 2, 259--367.

\bibitem{Sa} A.~Samokhin. {\it Some remarks on the derived categories of coherent sheaves on homogeneous spaces.} J. Lond. Math. Soc. (2) {\bf 76} (2007), no. 1, 122--134.

\bibitem{Sei} P.~Seidel. {\it More about vanishing cycles and mutation.} Symplectic geometry and mirror symmetry (Seoul, 2000), 429--465, World Sci. Publ., River Edge, NJ, 2001.

\bibitem{S} R.~Stanley. {\it Hilbert functions of graded algebras.} Adv. Math. {\bf 28}, 57--83 (1978)

\bibitem{strickland}  E.~Strickland. {\it Lines in G/P}. Math. Z. {\bf 242} (2002), no. 2, 227--240. 
  



\end{thebibliography}
\end{document}